 \newtheorem{thm}{Theorem}[section]
 \newtheorem{cor}[thm]{Corollary}
 \newtheorem{lem}[thm]{Lemma}
 \theoremstyle{definition}
 \theoremstyle{remark}
 \numberwithin{equation}{section}
\newcommand{\N}{\mathbb{N}}
\newcommand{\CC}{\mathbb {C}}
\title[Carleson type measures for   Fock--Sobolev  spaces] {Carleson type measures for  Fock--Sobolev  spaces}
\author [Tesfa  Mengestie]{Tesfa  Mengestie }
\address{Department of Mathematical Sciences\\
Norwegian University of Science and Technology (NTNU)\\
 NO- 7491 Trondheim, Norway}
\email{tesfantnu@gmail.com}
\subjclass{31B05, 39A12,31C20}
 \keywords{Fock--Sobolev spaces,  Fock--Carleson measures,  Berezin type transforms, averaging sequences,
averaging functions, weighted composition operator, bounded, compact,  essential norm}
\begin{document}
\begin{abstract}
We describe  the $(p,q)$ Fock--Carleson measures for weighted Fock--Sobolev spaces  in terms of the objects
  $(s,t)$-Berezin transforms, averaging functions, and averaging sequences on the complex  space $\CC^n$. The main results show that while these objects  may have growth not faster than  polynomials  to induce the $(p,q)$ measures for $q\geq p$, they should be of $L^{p/(p-q)}$ integrable against a weight of polynomial growth for $q<p$.
   As an application, we characterize the bounded and compact weighted composition operators on  the Fock--Sobolev spaces in terms of certain Berezin type integral transforms on  $\CC^n$. We also obtained estimation results for  the norms and essential norms of  the operators in terms of the integral transforms.  The results obtained   unify and extend a number of other results in the area.
\end{abstract}
 \maketitle
\section{Introduction}
The classical weighted
Fock space $\mathcal{F}_\alpha^p$ consists of entire functions $f$ on $\CC^n$ for which
\begin{equation*}
\|f\|_{p}^p=  \Big(\frac{\alpha p}{2\pi}\Big)^n\int_{\CC^n}
|f(z)|^p e^{-\frac{\alpha p}{2}|z|^2} dV(z) <\infty
\end{equation*} where   $dV$  denotes the usual Lebesgue measure
on $\CC^n$, $ 0 < p <\infty$,  and $\alpha$ is a positive parameter. For $p= \infty$, the corresponding space consists of all such  $f$'s  for which
\begin{equation*}
\|f\|_{\infty}= \sup_{z\in \CC^n}
|f(z)|e^{-\frac{\alpha}{2}|z|^2} <\infty.
\end{equation*}
   The space $\mathcal{F}_\alpha^2$, in particular, is a reproducing kernel Hilbert
space with  kernel  and normalized reproducing kernel  functions respectively  given by
$K_{w}(z)= e^{\alpha \langle z, w\rangle}$ and
$k_{w}(z)= e^{\alpha\langle z, w\rangle-\alpha|w|^2/2}$ where
\begin{align*}\langle z, w\rangle= \sum_{j= 1}^n z_j\overline{w_j},\ \ |z|= \sqrt{\langle z, z\rangle},\ w=(w_j), z=(z_j)\in \CC^n.  \end{align*}
For an  $n$-tuple $\beta= (\beta_1, \beta_2, . . ., \beta_n)$  of nonnegative integers we also write
$ \partial^\beta=\partial_1^{\beta_1}...\partial_n^{\beta_n} $ where $\partial_j$ denotes partial differentiation  with respect to
the j-th component. For any non-negative integer $m$ and $0< p \leq \infty$, the weighted Fock--Sobolev spaces $\mathcal{F}_{(m,\alpha)}^p$  of order $m$  consists of
entire functions $f$ on $\CC^n$ such that
\begin{equation}\label{norm}
\|f\|_{(p, m)}= \sum_{\beta_{sn}\leq m} \|\partial^\beta f\|_{p} <\infty
\end{equation} where $\beta_{sn}=\beta_1+\beta_2+...+\beta_n.$ These spaces have recently been
introduced by R. Cho  and  K. Zhu \cite{RCKZ}, and one of their main results  provides the following  useful
Fourier characterization of the spaces.
\begin{lem}\label{Fourier} Let $0<p\leq \infty$. Then an entire function $f$ on $\CC^n$ belongs to   $\mathcal{F}_{(m,\alpha)}^p$ if and only if $z^\beta f$ belongs to  $\mathcal{F}_{\alpha}^p$  for all nonnegative  multi-indices $\beta$
with $\beta_{sn}= m$  where  $z^\beta= z_1^{\beta_1} z_2^{\beta_2}z_3^{\beta_3}... z_n^{\beta_n}$ .
\end{lem}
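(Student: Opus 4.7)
The plan is to prove the two-sided norm equivalence
$$
\sum_{\beta_{sn}\le m}\|\partial^\beta f\|_p\;\asymp\;\|f\|_p+\sum_{\beta_{sn}=m}\|z^\beta f\|_p,
$$
valid for every entire $f$ on $\CC^n$, from which the stated equivalence follows (the $\|f\|_p$ term on the right is absorbed into $\sum_{\beta_{sn}=m}\|z^\beta f\|_p$ after a local correction dominated by any of the norms involved). The two tools I would rely on are Cauchy's integral formula on the unit polydisc centred at $z$, which trades a derivative of $f$ for a local supremum of $f$ itself, and the sub-mean value estimate
$$
|g(z)|^p e^{-\alpha p|z|^2/2}\;\lesssim\int_{B(z,1)}|g(w)|^p e^{-\alpha p|w|^2/2}\,dV(w),
$$
valid uniformly in $z$ for every entire $g$, which converts local suprema into global $L^p$ averages against the Gaussian weight.

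For the direction $z^\beta f\in\mathcal{F}_\alpha^p$ for all $\beta_{sn}=m\Rightarrow f\in\mathcal{F}_{(m,\alpha)}^p$, the elementary comparison $\max_{\beta_{sn}=m}|z^\beta|\asymp|z|^m$ gives control of $f$ itself on $\{|z|\ge R\}$ with $R$ fixed and large, while the sub-mean inequality plus a finite covering of $B(0,R)$ handles the compact region. Cauchy's formula then yields $|\partial^\beta f(z)|\lesssim\sup_{|\zeta-z|\le 1}|f(\zeta)|$ for every $\beta_{sn}\le m$, and combining this with the sub-mean inequality applied to each of the entire functions $z^\gamma f$ with $\gamma_{sn}=m$ produces the required bound on $\|\partial^\beta f\|_p$ after a Fubini swap.

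The reverse implication is immediate for $p=2$ from the adjoint identity $M_{z_j}^{\ast}=\alpha^{-1}\partial_j$ on $\mathcal{F}_\alpha^2$, obtained by integrating by parts against the Gaussian weight. For general $p$ one argues pointwise: on $\{|z|\ge R\}$ one writes $|z|^m|f(z)|$ as a local average of derivatives $|\partial^\gamma f|$ with $\gamma_{sn}\le m$ via Cauchy's formula, and then applies the sub-mean inequality to each $\partial^\gamma f$; on the complementary bounded region $|z^\beta f(z)|$ is directly dominated by a constant times $|f(z)|$, itself controlled by $\|f\|_{(p,m)}$.

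The hard part is exactly the reverse direction for $p\neq 2$: the Hilbert-space adjoint argument collapses, so the comparison between $|z|^m f$ and $\sum_{\gamma_{sn}\le m}\partial^\gamma f$ must be produced pointwise and then integrated against the Gaussian weight. The delicate step is to establish a robust pointwise bound of the form $(1+|z|)^m|f(z)|\lesssim\sup_{|w-z|\le 1}\sum_{\gamma_{sn}\le m}|\partial^\gamma f(w)|$ with constants independent of $z$, balancing the polynomial growth $|z|^m$ against the smoothing contributed by the local supremum, and checking that the two regimes $|z|\le R$ and $|z|\ge R$ match up with uniform constants.
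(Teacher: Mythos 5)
A preliminary remark: the paper itself offers no proof of this lemma; it is quoted from \cite{RCKZ}, where the argument is global, built on the reproducing formula and Gaussian-kernel integral operators. So your proposal has to stand on its own, and as written it has two genuine gaps, both occurring exactly where local estimates meet the Gaussian weight. In the direction ``$z^\beta f\in\mathcal{F}_\alpha^p$ for all $\beta_{sn}=m$ implies $\partial^\beta f\in\mathcal{F}_\alpha^p$ for all $\beta_{sn}\le m$'' you use the unweighted Cauchy estimate $|\partial^\beta f(z)|\lesssim\sup_{|\zeta-z|\le 1}|f(\zeta)|$ and then want to convert the local supremum into a weighted average of the functions $z^\gamma f$. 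But $e^{-\frac{\alpha p}{2}|z|^2}$ is not comparable to $e^{-\frac{\alpha p}{2}|\zeta|^2}$ on unit balls: the ratio can be as large as $e^{\frac{\alpha p}{2}(2|z|+1)}$, and this exponential loss cannot be absorbed by the polynomial gain $(1+|z|)^{-m}$ coming from $\max_{\beta_{sn}=m}|z^\beta|\simeq|z|^m$. The standard repair is to apply Cauchy's estimate not to $f$ but to the twisted function $w\mapsto f(w)e^{-\alpha\langle w,z\rangle}$, which yields the weighted inequality $|\partial^\beta f(z)|e^{-\frac{\alpha}{2}|z|^2}\lesssim (1+|z|)^{|\beta|}\sup_{|w-z|\le 1}|f(w)|e^{-\frac{\alpha}{2}|w|^2}$; with that estimate your Fubini argument does close, since $|\beta|\le m$ makes the net polynomial factor bounded. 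Your sketch omits precisely this step, and without it the claimed bound on $\|\partial^\beta f\|_p$ fails.

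The reverse direction is in worse shape. The pointwise bound you single out, $(1+|z|)^m|f(z)|\lesssim\sup_{|w-z|\le 1}\sum_{\gamma_{sn}\le m}|\partial^\gamma f(w)|$, is false as stated: take $f\equiv 1$, for which the right-hand side equals $1$ while the left-hand side is unbounded. Inserting the Gaussian weight on both sides removes this counterexample, but then the inequality is essentially the whole content of this implication, and you give no argument for it beyond calling it delicate. The $p=2$ shortcut also needs care, since $M_{z_j}$ is unbounded on $\mathcal{F}_\alpha^2$; the identity $\langle z_jf,g\rangle=\alpha^{-1}\langle f,\partial_jg\rangle$ holds only as a pairing on a dense class and does not by itself give $\|z^\beta f\|_2\lesssim\sum_{\gamma_{sn}\le m}\|\partial^\gamma f\|_2$ (a direct Taylor-coefficient computation is the clean route there). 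Two ways to fill the general-$p$ gap: (a) induct on $m$, using the Leibniz expansion of $\partial^\beta_w\big(f(w)e^{-\alpha\langle w,z\rangle}\big)$ at $w=z$, which expresses $(\alpha\bar z)^{\beta}f(z)e^{-\alpha|z|^2}$ as a twisted Cauchy term (controlled by $\sup_{|w-z|\le1}|f(w)|e^{-\frac{\alpha}{2}|w|^2}$) plus terms $|z|^{|\beta|-|\gamma|}|\partial^\gamma f(z)|e^{-\alpha|z|^2}$ with $1\le|\gamma|\le|\beta|$ that are handled by the inductive hypothesis applied to $\partial^\gamma f$; or (b) argue globally as in \cite{RCKZ}, writing $z^\beta f$ and $\partial^\beta f$ as Gaussian-kernel integral operators applied to derivatives, respectively to $\bar w^\beta f$, and proving $L^p$-boundedness of these operators by a Schur test. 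Without one of these inputs the proposal does not establish the lemma.
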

As a consequence of this lemma, it was proved that the norm in \eqref{norm} is comparable to
 \begin{equation}
 \label{normequal}
\|f\|_{(p, m)}= \Big(C_{(p,m,n)} \int_{\CC^n}
|z|^{mp} |f(z)|^p e^{-\frac{\alpha p}{2}|z|^2} dV(z)\Big)^{1/p}
\end{equation}
for $ 0 < p <\infty$ and
\begin{equation*}
C_{(p,m,n)}= \Big(\frac{\alpha p}{2}\Big)^{(mp/2)+ n} \frac{\Gamma(n)}{
\pi^n \Gamma Γ((mp)/2 + n)},
\end{equation*} where $ \Gamma$ denotes the Gamma function. For $p= \infty,$ the corresponding norm is
\begin{equation}
\label{normeqqual2}
\|f\|_{(\infty,m)}= \sup_{z\in \CC^n}
|z|^m|f(z)|e^{-\frac{\alpha}{2}|z|^2}.
\end{equation}  We  find it more convenient to use
   this equivalent norm through out the rest of the  paper. We  note in passing that
   the Fock--Sobolev spaces of order $m$ can also  be considered  as a weighted (generalized) Fock spaces $\mathcal{F}_{\varphi_{m}}^p$  consisting  of
   entire functions $f$  for which
  \begin{align*}
  \label{weight} \Big(\frac{\alpha p}{2\pi}\Big)^n\int_{\CC^n}
|f(z)|^p e^{-p \varphi_{m}(z)} dV(z) <\infty
\end{align*} for $0<p<\infty$ and $\sup_{z\in \CC^n} |f(z)|e^{-\varphi_{m}(z)} <\infty$ for $p=\infty $ where
$\varphi_{m}(z)= - m\log (1+|z|)+ \alpha |z|^2/2.$

We  next   recall  the notion of
lattice for the  space $\CC^n$. For a positive $r$ we denote by $D(z,r)$ the  set $\{ w\in\CC^n: |z-w|<r\}.$ We  say that a sequence of distinct points
$(z_k)_{k\in \N} \subset \CC^n$ is  an $r/2-$ lattice  for  $\CC^n$ if the  sequence of the balls $D(z_k, r), \ k\in \N $ constitutes a covering of $\CC^n$ and the balls $ D(z_k, r/2)$ are mutually disjoint.  The
sequence $(z_k), k\in \N$ will refer to such  $r/2$ lattice with a fixed $r$ in the
remaining part of the paper. An interesting  example of such a lattice  can be found in \cite{ZHXL}.
\begin{lem} \label{covering} Let $r>0$ and $(z_k)_{k\in \N}$ be  an $r/2-$ lattice  for  $\CC^n$. Then there exists a positive integer $N_{\max}$ such that every point in $\CC^n$ belongs to at
most $N_{\max}$ of the balls $D(z_k, 2r)$.
\end{lem}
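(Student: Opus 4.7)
The plan is the standard volume-counting argument for locally finite coverings by balls of comparable size. Fix an arbitrary point $z \in \CC^n$ and introduce the index set
\[
K(z) = \{k \in \N : z \in D(z_k, 2r)\},
\]
so that the task reduces to bounding $|K(z)|$ by a constant independent of $z$.

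First I would observe that for any $k \in K(z)$ the inequality $|z-z_k|<2r$ together with the triangle inequality yields the inclusion $D(z_k, r/2) \subset D(z, 5r/2)$. Next I would invoke the hypothesis that the balls $D(z_k, r/2)$ form a pairwise disjoint family, which is built into the definition of an $r/2$-lattice. Hence the subfamily $\{D(z_k, r/2) : k \in K(z)\}$ is a disjoint collection of balls, each of radius $r/2$, all sitting inside the single ball $D(z, 5r/2)$ of radius $5r/2$.

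Comparing Lebesgue volumes on $\CC^n \cong \R^{2n}$, where a Euclidean ball of radius $\rho$ has volume proportional to $\rho^{2n}$, one gets
\[
|K(z)| \cdot V\bigl(D(0, r/2)\bigr) \le V\bigl(D(0, 5r/2)\bigr),
\]
which simplifies to $|K(z)| \le 5^{2n}$. Setting $N_{\max} = 5^{2n}$ then gives the claim, and the bound is independent of $z$ and of the particular lattice chosen.

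There is essentially no obstacle here; the argument is robust and relies only on the two defining properties of the lattice (the covering property, which is not even needed for this direction, and the disjointness of the half-radius balls). The only minor care is to use the correct real dimension $2n$ when comparing volumes, since the balls $D(z,r)$ are Euclidean balls in $\CC^n = \R^{2n}$; any neater bound is unnecessary for later applications, where only the existence of such a uniform $N_{\max}$ will be used.
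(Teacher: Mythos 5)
Your argument is correct: the inclusion $D(z_k,r/2)\subset D(z,5r/2)$ for every index $k$ with $|z-z_k|<2r$, combined with the disjointness of the half-radius balls and a volume comparison in real dimension $2n$, gives the uniform bound $N_{\max}=5^{2n}$, independent of $z$ and of the lattice. The paper itself does not prove the lemma but refers to \cite{KH,KZH}, and the volume-counting proof you give is exactly the standard argument used there, so there is nothing further to add.
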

The proof of the lemma can be found in \cite{KH,KZH} where in
\cite{KH} a more general setting has been considered.

Let $\mu$ be a positive Borel measure on $\CC^n.$ Then its  average on $D(z,r)$ is the quantity
$\mu(D(z,r))/Vol(D(z,r))$ where $Vol(D(z,r))$ is the Euclidean volume of the
ball which is a constant for all $z$ in $\CC^n.$ In what follows, we simply refer
$\mu(D(.,r))$  as an averaging function of $\mu,$ and $\mu(D(z_k,
r))$ as its averaging sequence.

A word on notation: The notation $U(z)\lesssim
V(z)$ (or equivalently $V(z)\gtrsim U(z)$) means that there is a
constant $C$ such that $U(z)\leq CV(z)$ holds for all $z$ in the set
in question, which may be a Hilbert space or  a set of complex
numbers. We write $U(z)\simeq V(z)$ if both $U(z)\lesssim V(z)$ and
$V(z)\lesssim U(z)$.
\section{ The $(p,q)$ Fock--Carleson measures on Fock--Sobolev spaces }
Carleson measures were first introduced by  L. Carleson \cite{Carl} as a tool  to study interpolating sequences in the Hardy space
$H^\infty $ of bounded analytic functions in the unit disc and  the corona problem. Since then the measures have found numerous applications and extensions  in the study  of  various spaces of functions: for example see \cite{Al1,Ba4, BMS2,SCBS, Co2, Co1,DL1, SCPower,TV}.  In this paper, we study one of its extensions namely the $(p,q)$ Fock--Carleson measures for  weighted Fock--Sobolev spaces. In the
 next section, we will also look at   application of such  measures in studying  some mapping  properties of weighted composition operators  acting between different weighted Fock--Sobolev spaces.

Let $0<p\leq \infty$ and $0<q< \infty$. Then we call  a nonnegative measure $\mu$ on $\CC^n$ a
$(p, q) $ Fock--Carleson measure for  Fock--Sobolev spaces if\footnote{We follow the approach to Carleson measures
 taken in \cite{RCKZ}.}
\begin{equation}
\label{carleson} \int_{\CC^n} |f(z)|^q e^{\frac{-\alpha
q}{2}|z|^2} d\mu(z)\lesssim \| f\|_{(p,m)}^q
\end{equation} for all $f$ in $\mathcal{F}_{(m,\alpha)}^p$. In other words,  $\mu$ is a $(p,q)$ Fock--Carleson measure if and only if
the canonical embedding map $I_\mu: \mathcal{F}_{(m,\alpha)}^p \to L^q(\sigma_q)$ is bounded where $d\sigma_q(z)= e^{-\frac{q\alpha}{2} |z|^2}d\mu(z)$.  We
 call $\mu$ a $(p, q) $ vanishing Fock--Carleson measure if
\begin{equation*}
\lim_{j\to \infty}\int_{\CC^n} |f_j(z)|^q
e^{\frac{-q\alpha}{2}|z|^2} d\mu(z)= 0
\end{equation*} whenever $f_j$ is a uniformly bounded sequence in
$\mathcal{F}_{(m,\alpha)}^p$ that converges uniformly to zero on compact subsets of
$\CC^n$ as $j \to \infty.$ We  will write $\|\mu\|= \|I_\mu\|$ for the smallest
admissible constant in inequality \eqref{carleson} which often is called the Carleson constant.

For $s, t>0, $ we may  define the $(t,s)$-Berezin  type transform of $\mu$
by
  \begin{equation*}
\widetilde{\mu}_{(t,s)}(w)= \int_{\CC^n} (1+|z|)|^{-s} e^{-\frac{t\alpha}{2}|z-w|^2}d\mu(z).
\end{equation*} As will be seen, its role is analogous to that played by the Berezin transform for the Bergman spaces.
For convenience, we will also  use the  notations
 \begin{align*}
 \mu_s(z)= \frac{\mu(z)}{(1+|z|)^{s}}, \ \mu_{(s, r, D)}(z)=\frac{\mu(D(z,r))}{(1+|z|)^{s}}, \ \text{and}\ \ L^p= L^p(\CC^n, dV).\end{align*}   We may now  state our first main result.
\begin{thm}\label{carlesonmeasure}
Let $0<p\leq q< \infty$ and $\mu \geq 0.$ Then the following statements are
equivalent.
\begin{enumerate}
\item $\mu $ is  a $(p, q)$ Fock--Carleson measure;
\item $\widetilde{\mu}_{(t,mq)}\in L^\infty$ for some (or any)  $t>0;$
\item $\mu_{(mq,r,D)} \in L^\infty$ for some (or any)  $r>0;$
\item $\mu_{(mq,r,D)}(z_k) \in \ell^\infty$ for some (or any)  $r>0.$    Moreover,  we  have
   \begin{equation}
   \label{normestimate1}
   \|\mu\|^q \simeq
\|\widetilde{\mu}_{(t, mq)}\|_{L^\infty} \simeq \|\mu_{(mq, r, D)} \|_{
L^\infty}\simeq \| \mu_{(mq, r, D)}(z_k) \|_{\ell^{\infty}}.
   \end{equation}
\end{enumerate}
\end{thm}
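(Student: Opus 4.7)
The plan is to run a four-term cycle (i)$\Rightarrow$(iii)$\Rightarrow$(ii)$\Rightarrow$(i) with (iii)$\Leftrightarrow$(iv) handled separately by a covering argument, tracking constants throughout to extract \eqref{normestimate1}. The backbone is a good supply of test functions and a pointwise (sub-mean-value) estimate relating $|f|$ to a local Fock--Sobolev integral.

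First I would set up the normalized kernel-type test functions. Using the equivalent norm \eqref{normequal} and the identity $|k_w(z)|^p e^{-\alpha p|z|^2/2}=e^{-\alpha p|z-w|^2/2}$, a Gaussian integral (concentrated where $|z-w|\lesssim 1$, so $(1+|z|)\simeq(1+|w|)$) gives $\|k_w\|_{(p,m)}\simeq (1+|w|)^m$. Hence $f_w:=k_w/(1+|w|)^m$ is a uniformly norm-bounded family in $\mathcal{F}_{(m,\alpha)}^p$. For (i)$\Rightarrow$(iii), feed $f_w$ into \eqref{carleson} and restrict the integral to $D(w,r)$, where the Gaussian is bounded below by $e^{-\alpha q r^2/2}$; this immediately yields $\mu(D(w,r))\lesssim \|\mu\|\,(1+|w|)^{mq}$ for every $w$, which is both (iii) and (iv) with the expected constant. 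The equivalence (iii)$\Leftrightarrow$(iv) is routine: given any $z$, choose a lattice point $z_k$ with $z\in D(z_k,r)$, observe $D(z,r)\subset D(z_k,2r)$, cover $D(z_k,2r)$ by finitely many $D(z_j,r)$ with $(1+|z_j|)\simeq(1+|z|)$, and conclude via Lemma \ref{covering}.

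For (iii)$\Rightarrow$(ii), split $\CC^n$ across the lattice: since $(1+|z|)\simeq(1+|z_k|)$ on $D(z_k,r)$ and $|z-w|\geq |z_k-w|-r$, one gets
\begin{equation*}
\widetilde{\mu}_{(t,mq)}(w)\lesssim \sum_{k}\frac{\mu(D(z_k,r))}{(1+|z_k|)^{mq}}\, e^{-c|z_k-w|^2}\lesssim \|\mu_{(mq,r,D)}(z_k)\|_{\ell^\infty}\sum_{k}e^{-c|z_k-w|^2},
\end{equation*}
and the last sum is uniformly bounded in $w$ by comparing with an integral via Lemma \ref{covering}.

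The main work is (ii)$\Rightarrow$(i). The starting point is the standard Fock sub-mean-value estimate for entire $f$, which, after multiplying by $(1+|z|)^{mp}$ and using $(1+|z|)\simeq(1+|w|)$ on $D(z,1)$, takes the form
\begin{equation*}
(1+|z|)^m|f(z)|e^{-\alpha|z|^2/2}\lesssim \left(\int_{D(z,1)}(1+|w|)^{mp}|f(w)|^p e^{-\alpha p|w|^2/2}dV(w)\right)^{1/p}.
\end{equation*}
Raising to the $q$-th power, separating a factor $(1+|z|)^{-mq}$, and using $q/p\geq 1$ to linearize via $A^{q/p}\leq A\cdot \|f\|_{(p,m)}^{q-p}$ (where $A$ is the local integral above), integrating against $d\mu$, and applying Fubini swaps the order so that the inner integral becomes $\int_{D(w,1)}(1+|z|)^{-mq}d\mu(z)\lesssim \widetilde{\mu}_{(t,mq)}(w)\lesssim \|\widetilde{\mu}_{(t,mq)}\|_{L^\infty}$. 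What remains is exactly $\|f\|_{(p,m)}^p$, giving $\int|f|^q e^{-\alpha q|\cdot|^2/2}d\mu \lesssim \|\widetilde{\mu}_{(t,mq)}\|_{L^\infty}\|f\|_{(p,m)}^q$. Combining the four implications with their quantitative estimates then produces the chain of comparabilities \eqref{normestimate1}.

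The main obstacle is the pointwise-to-integrated step in (ii)$\Rightarrow$(i): one must balance the $q/p$ exponent correctly (the trick is to absorb the excess power $q-p$ into the global norm $\|f\|_{(p,m)}^{q-p}$, which is legitimate precisely because $p\leq q$ and the local integral is bounded by the global one), and one must ensure the Fubini swap converts the Gaussian-weighted Berezin quantity into the $(1+|z|)^{-mq}$-weighted local average of $\mu$ without losing the quantitative bound. The independence from the parameters $t>0$ and $r>0$ (the "some or any" statements) follows since changing $t$ or $r$ alters only constants in the Gaussian/covering comparisons above.
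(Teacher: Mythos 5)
Your proposal is correct, but it closes the cycle by a genuinely different route than the paper. The paper obtains the equivalence of (ii), (iii), (iv), together with their norm comparabilities, wholesale from Lemma~\ref{lemma1} (the $L^p$ comparison of Berezin transforms, averaging functions and averaging sequences, applied with exponent $\infty$), proves (i)$\Rightarrow$(iii) with the same normalized-kernel test functions you use, and then closes the loop with (iv)$\Rightarrow$(i): it decomposes $\CC^n$ into the lattice balls $D(z_k,2r)$, bounds each piece by $\mu(D(z_k,2r))(1+|z_k|)^{-mq}$ times a supremum, converts the supremum into an integral over $D(z_k,3r)$ via the sub-mean-value inequality \eqref{est}, uses $\sum_k a_k^{q/p}\leq\big(\sum_k a_k\big)^{q/p}$ for $q/p\geq 1$, and invokes Lemma~\ref{oneforall} for radius independence. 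You instead prove (iii)$\Leftrightarrow$(iv) and (iii)$\Rightarrow$(ii) directly by covering and lattice-Gaussian summation, and close with (ii)$\Rightarrow$(i) via the pointwise estimate plus Fubini, absorbing the excess exponent through $A^{q/p}\leq A\,\|f\|_{(p,m)}^{q-p}$. Your route is more elementary and self-contained for this $L^\infty$ case (it bypasses Lemma~\ref{gen} and Lemma~\ref{lemma1}, which the paper needs anyway for Theorems~\ref{strongcarlesonmeasure} and~\ref{boundedinfinity}), and your (ii)$\Rightarrow$(i) is essentially the same mechanism the paper itself deploys later, in \eqref{need}, for the case $q<p$; the paper's route buys the full chain \eqref{normestimate1} in one stroke from \eqref{keylem} and reuses machinery required elsewhere. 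Two small points to tighten: in (i)$\Rightarrow$(iii) the conclusion should read $\mu(D(w,r))\lesssim\|\mu\|^{q}(1+|w|)^{mq}$ (the $q$-th power of the Carleson constant), and both your bound $A\lesssim\|f\|_{(p,m)}^{p}$ and your final identification of the remaining integral with $\|f\|_{(p,m)}^{p}$ rest on the comparability of the weights $(1+|z|)^{mp}$ and $|z|^{mp}$, i.e.\ \eqref{furcomp}; this is true but deserves the explicit justification (finitely many lattice balls near the origin, or a local sub-mean-value argument on an annulus) rather than being taken for granted.
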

Vanishing  Carleson measures appear naturally in the study of compact
composition operators, Toeplitz and Hankel operators,  Volterra type integral operators,
two weight Hilbert transforms, and in several  other contexts
in various functional spaces. As far as  their
characterization is concerned, there  exists a  general ``folk
theorem'':
 once the Carleson measures are described by a certain ``big oh''
condition, vanishing Carleson measures are then characterized by the
corresponding ``little oh'' counterparts. This does not however mean that
such  `` folk theorem'' is always true. See  \cite{FMB} for a
counterexample. Our next result shows that  it  still holds on Fock--Sobolev spaces.
\begin{thm}\label{compactcarleson}
Let $0<p\leq q< \infty$ and $\mu \geq 0.$ Then the following statements are
equivalent.
\begin{enumerate}
\item $\mu $ is a $(p, q)$ vanishing Fock--Carleson measure;
\item $\widetilde{\mu}_{(t, mq)}(z)\to 0 \ \ \text{as} \ \ |z| \to \infty$ for some (or any)  $t>0;$
\item $\mu_{(mq,r,D)}(z)\to 0 \ \ \text{as} \ \ |z| \to \infty$ for some (or any)  $r>0;$
\item  $\mu_{(mq,r, D)}(z_k) \to 0\ \ \text{as} \ \ k \to \infty$ for some (or any)  $r>0.$
\end{enumerate}
\end{thm}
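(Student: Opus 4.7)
My proof plan follows the standard ``folk theorem'' template: the equivalences among the ``little oh'' conditions (ii)--(iv) are established by direct pointwise comparison (mirroring the proof of Theorem \ref{carlesonmeasure}), and condition (i) is coupled to these by testing against normalized reproducing kernels in one direction and by a lattice decomposition of the defining integral in the other.

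First, the equivalences among (ii), (iii) and (iv) are proved by adapting the corresponding step in Theorem \ref{carlesonmeasure}, replacing each ``uniformly bounded'' conclusion by ``vanishes at infinity''. The essential ingredients are the pointwise comparison $(1+|\zeta|) \simeq (1+|z|)$ for $\zeta \in D(z,r)$, Lemma \ref{covering} for bounded overlap of the doubled lattice balls, and the Gaussian localization supplied by $e^{-t\alpha|z-w|^2/2}$; together these allow the decay of one object as the base point escapes to infinity to be transferred freely to the other two.

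Next, for (i) $\Rightarrow$ (ii), I would test the vanishing Carleson condition against the Fock--Sobolev normalized kernels
\begin{equation*}
K_w(z) = (1+|w|)^{-m}\, e^{\alpha\langle z,w\rangle - \frac{\alpha}{2}|w|^2}.
\end{equation*}
Using the equivalent norm \eqref{normequal} one checks
\begin{equation*}
\|k_w\|_{(p,m)}^p \simeq \int_{\CC^n} |z|^{mp} e^{-\frac{\alpha p}{2}|z-w|^2}\, dV(z) \simeq (1+|w|)^{mp},
\end{equation*}
so $\{K_w\}$ is uniformly bounded in $\mathcal{F}_{(m,\alpha)}^p$ and converges to zero uniformly on compact subsets of $\CC^n$ as $|w| \to \infty$. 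Feeding it into the vanishing Carleson condition yields
\begin{equation*}
(1+|w|)^{-mq} \int_{\CC^n} e^{-\frac{q\alpha}{2}|z-w|^2}\, d\mu(z) \longrightarrow 0.
\end{equation*}
The elementary inequality $1+|w| \leq (1+|z-w|)(1+|z|)$, combined with $(1+|z-w|)^{mq} e^{-\frac{q\alpha}{2}|z-w|^2} \lesssim e^{-\frac{q\alpha}{4}|z-w|^2}$, dominates $\widetilde{\mu}_{(2q,\,mq)}(w)$ by the displayed quantity, giving the desired decay for $t=2q$; the ``some or any'' clause of (ii) then extends this to all $t > 0$.

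Finally, for (iv) $\Rightarrow$ (i), let $\{f_j\}$ be uniformly bounded in $\mathcal{F}_{(m,\alpha)}^p$ with $f_j \to 0$ uniformly on compacta. Covering $\CC^n$ by $\{D(z_k,r)\}$ and applying the weighted subharmonic mean-value estimate for Fock--Sobolev functions (the same one appearing in the proof of Theorem \ref{carlesonmeasure}) gives
\begin{equation*}
\int_{\CC^n} |f_j|^q e^{-\frac{q\alpha}{2}|z|^2}\, d\mu(z) \lesssim \sum_k \mu_{(mq,\,r,\,D)}(z_k)\, \Big(\int_{D(z_k,\,2r)} |\zeta|^{mp} |f_j(\zeta)|^p e^{-\frac{\alpha p}{2}|\zeta|^2}\, dV(\zeta)\Big)^{q/p}.
\end{equation*}
Given $\epsilon > 0$, condition (iv) supplies $R$ such that $\mu_{(mq,r,D)}(z_k) < \epsilon$ whenever $|z_k| > R$; since $q/p \geq 1$ and Lemma \ref{covering} bounds the overlap of the balls $D(z_k, 2r)$, the tail of the sum is $\lesssim \epsilon\, \|f_j\|_{(p,m)}^q \lesssim \epsilon$, while the finite head over $\{|z_k| \leq R\}$ tends to zero by uniform convergence of $f_j$ on $\{|z| \leq R + 2r\}$. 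The main technical hurdle is the accurate bookkeeping in the mean-value step when $p \neq q$, but as this estimate is already produced in the proof of Theorem \ref{carlesonmeasure}, it can be imported here verbatim.
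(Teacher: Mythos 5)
Your proposal is correct and follows essentially the same route as the paper: the little-oh analogue of Lemma~\ref{lemma1} for the equivalence of (ii)--(iv), the normalized test functions $\xi_{(w,m)}=(1+|w|)^{-m}k_w$ for necessity, and the lattice covering plus the pointwise estimate \eqref{est} with a head/tail split for sufficiency. The only (cosmetic) difference is that you deduce decay of the Berezin transform directly by absorbing $(1+|z-w|)^{mq}$ into the Gaussian, whereas the paper restricts the tested integral to $D(w,r)$ to get decay of the averaging function; both are then closed up by the (ii)--(iv) equivalences.
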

 Conditions (ii), (iii) and (iv) in the two theorems above are independent of the parameter $\alpha$ and exponent $p\leq q$. It means that
 if $\mu$ is a $(p, q)$  (vanishing) Fock--Carleson measure for some $p\leq q$ and $\alpha >0,$ then it is a $(p_1, q)$ (vanishing) Fock--Carleson measure for any $p_1 \leq q$ and every other parameter $\alpha.$ On the other hand, the conditions are dependent on the size of the exponent $q$ in the target space in the sense that if $\mu$ constitutes a (p, q) Fock--Carleson measure for some $q\geq p,$ then it may fail to be a $(p, q_1)$ Fock measures for any $q_1\geq p$ unless $m=0$ or $q_1\geq q.$ This presents a clear  distinction  with the corresponding conditions for  the ordinary Fock spaces $(m=0).$ Because, in the later, it holds that $\mu$ is a $(p,q)$ Fock--Carleson measure for some $p\leq q$ if and only if  it  is a $(p_1,q_1)$ Fock--Carleson measure for  any pair  of exponents  $(p_1,q_1)$  for which $p_1\leq q_1$.  If we take a different approach to the $(p,q)$ measures  and  redefine inequality  \eqref{carleson} by replacing $d\mu(z)$ with  $(1+|z|)^{mq} d \mu(z)$ the distinction mentioned above  would  disappear and the (p,q) measure conditions will be exactly the same as they are for ordinary Fock spaces.

As in the case of ordinary Fock spaces, the  Fock--Sobolev spaces  satisfy  the inclusion monotonicity  property  $\mathcal{F}_{(m,\alpha)}^p \subseteq \mathcal{F}_{(m,\alpha)}^q $  whenever $0 <p\leq q \leq \infty$ \cite{RCKZ}.  Thus,  for  $p>q,$  the boundedness conditions on  the averaging functions, averaging sequences
  and $(t, mq)$-Berezin transforms  will be  replaced by  the next stronger  $p/(p-q)$ integrability  against a weight of polynomial growth conditions.
\begin{thm}\label{strongcarlesonmeasure}
Let $0<q <p<\infty$ and $\mu \geq 0.$ Then the following statements are
equivalent.
\begin{enumerate}
\item $\mu $ is  a $(p, q)$ Fock--Carleson measure;
\item $\mu $ is a  $(p, q)$ vanishing Fock--Carleson measure;
\item $\widetilde{\mu}_{(t, mq)}\in L^{\frac{p}{p-q}}$ for some (any) $t>0$;
\item $\mu_{(mq,r, D)}\in L^{\frac{p}{p-q}} $ for some (or any)  $r>0;$
\item $\mu_{(mq,r,D)}(z_k)\in \ell^{\frac{p}{p-q}} $ for some (or any)  $r>0.$  Moreover, we have
\begin{equation}\label{normestimate2}
\|\mu\|^q \simeq \|\mu_{(mq, r, D)}\|_{L^{\frac{p}{p-q}}}\simeq \|\widetilde{\mu}_{(t,mq)}\|_{L^{\frac{p}{p-q}}}\simeq \| \mu_{(mq, r, D)}(z_k)\|_{\ell^{\frac{p}{p-q}}}.
\end{equation}
\end{enumerate}
\end{thm}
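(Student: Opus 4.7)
My strategy is to prove the cycle (iv)$\Leftrightarrow$(v)$\Leftrightarrow$(iii)$\Rightarrow$(ii)$\Rightarrow$(i)$\Rightarrow$(v), so that all five statements are equivalent and the norm comparison \eqref{normestimate2} falls out of the individual steps. The bulk of the work sits in two places: the purely geometric equivalence of the three ``size'' conditions (iii), (iv), (v), and the converse (i)$\Rightarrow$(v), which in the regime $q<p$ requires a probabilistic test function rather than a single normalised kernel.

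For (iv)$\Leftrightarrow$(v) I would split $\CC^n$ as $\bigcup_k D(z_k,r)$: on each $D(z_k,r)$ the weight $(1+|z|)^{-mq}$ is comparable to $(1+|z_k|)^{-mq}$, and $\mu(D(z,r))$ is sandwiched between $\mu(D(z_k,r/2))$ and $\mu(D(z_k,2r))$ after a harmless change of $r$, so Lemma \ref{covering} turns $\|\mu_{(mq,r,D)}\|_{L^{p/(p-q)}}^{p/(p-q)}$ into a sum comparable to $\sum_k\bigl(\mu(D(z_k,r))/(1+|z_k|)^{mq}\bigr)^{p/(p-q)}$ up to an overlap and volume constant. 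For (iii)$\Leftrightarrow$(iv) I would exploit the rapid decay of the Gaussian kernel $e^{-t\alpha|z-w|^2/2}$: a lower bound on $\widetilde{\mu}_{(t,mq)}(w)$ comes from restricting the defining integral to the unique lattice ball containing $w$, while an upper bound follows by decomposing along the annuli $D(w,(j+1)r)\setminus D(w,jr)$ and summing a convergent geometric series in $j$ against the averaging function.

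The implication (iv)$\Rightarrow$(i) is a one-shot H\"older argument. Subharmonicity of $|f|^q$ on $D(z,r)$ together with the mild variation of $|z|^{mq}$ there yields the pointwise bound
\begin{equation*}
(1+|z|)^{mq}|f(z)|^q e^{-q\alpha|z|^2/2}\lesssim \int_{D(z,r)}(1+|w|)^{mq}|f(w)|^q e^{-q\alpha|w|^2/2}\,dV(w);
\end{equation*}
integrating against $d\mu$ and swapping the order of integration pulls $\mu_{(mq,r,D)}$ outside, after which H\"older with exponents $p/q$ and $p/(p-q)$ combined with the norm identity \eqref{normequal} gives $\|\mu\|^q\lesssim\|\mu_{(mq,r,D)}\|_{L^{p/(p-q)}}$. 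The same splitting upgrades (iv) to (ii) for free: on a large ball $\{|z|\leq R\}$ the sequence $f_j\to 0$ uniformly, while on the tail $\|\mu_{(mq,r,D)}\mathbf{1}_{\{|z|>R\}}\|_{L^{p/(p-q)}}\to 0$ by absolute continuity of the integral, so the Carleson integral of $|f_j|^q e^{-q\alpha|z|^2/2}$ tends to zero.

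The main obstacle is (i)$\Rightarrow$(v). Since $q<p$, testing the Carleson inequality on any single normalised Fock--Sobolev kernel gives at best an $\ell^\infty$-type bound, and to detect $\ell^{p/(p-q)}$-membership I would follow a Luecking--style randomisation. For a finitely supported sequence $(c_k)$ and Rademacher functions $\varepsilon_k(t)$ on $[0,1]$ set
\begin{equation*}
f_t(z)=\sum_k c_k\,\varepsilon_k(t)\,\frac{k_{z_k}(z)}{(1+|z_k|)^{m}},
\end{equation*}
and verify $\|f_t\|_{(p,m)}^p\lesssim \sum_k |c_k|^p$ uniformly in $t$ via an atomic decomposition for $\mathcal{F}_{(m,\alpha)}^p$. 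Inserting $f_t$ into \eqref{carleson}, integrating in $t$, and applying Khinchine's inequality pointwise in $z$ replaces $\int_0^1|f_t(z)|^q dt$ by a square function; restricting the outer integral to the disjoint balls $D(z_j,r/2)$, on which $|k_{z_j}(z)|\gtrsim e^{\alpha|z|^2/2}$, produces
\begin{equation*}
\sum_j |c_j|^q\,\frac{\mu(D(z_j,r/2))}{(1+|z_j|)^{mq}}\lesssim \|\mu\|^q\Big(\sum_k|c_k|^p\Big)^{q/p}.
\end{equation*}
Taking the supremum over sequences with $\sum_k|c_k|^p\leq 1$ and invoking the $(\ell^{p/q},\ell^{p/(p-q)})$ duality applied to $(|c_k|^q)_k$ then forces $\mu_{(mq,r/2,D)}(z_k)\in\ell^{p/(p-q)}$ with the matching norm control, closing the cycle and delivering \eqref{normestimate2}.
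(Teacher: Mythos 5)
Your proposal is correct and follows essentially the same route as the paper's proof: the equivalence of (iii)--(v) via the lattice covering, finite overlap and Gaussian decay estimates (which the paper packages into Lemmas~\ref{gen}--\ref{lemma1}), the implications (iv)$\Rightarrow$(i) and (iv)$\Rightarrow$(ii) via the pointwise subharmonicity estimate, Fubini and H\"older with exponents $p/q$ and $p/(p-q)$, and (i)$\Rightarrow$(v) via Luecking's Rademacher/Khinchine randomisation of an atomic (kernel) decomposition followed by $\ell^{p/q}$--$\ell^{p/(p-q)}$ duality. The only deviations are cosmetic: you prove the auxiliary comparisons inline (annuli plus Minkowski instead of the paper's $L^1$--$L^\infty$ interpolation in Lemma~\ref{gen}), and by restricting to the disjoint balls $D(z_j,r/2)$ and keeping only the diagonal term of the square function you sidestep the paper's separate treatment of the cases $q\geq 2$ and $q<2$.
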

Observe that the fraction $p/(p-q)$ is the conjugate exponent of $p/q$ whenever $0<q\leq p<\infty.$ In the limiting case, i.e., when  $ p= \infty,$  the next
 yet stronger condition holds.
\begin{thm}\label{boundedinfinity}
Let $0<q< \infty$ and $\mu \geq 0.$ Then the following statements are equivalent.
\begin{enumerate}
\item $\mu$ is an $(\infty, q)$ Fock--Carleson measure;
\item $\mu$ is an $(\infty, q)$ vanishing Fock--Carleson measure;
\item $\widetilde{\mu}_{(t,mq)}\in L^1$ for some(or any) $t>0$;
\item $\mu_{(mq,r,D)}\in L^1$ for some (or any)  $r>0;$
\item $\mu_{(mq,r,D)}(z_k)\in \ell^1 $ for some (or any)  $r>0;$
\item $\mu_{mq}$ is a finite measure on $\CC^n.$ Moreover,  we  have
   \begin{equation}
   \label{fourequal}
   \|\mu\|^q \simeq
\|\widetilde{\mu}_{(t, mq)}\|_{L^1} \simeq \|\mu_{(mq,r,D)}\|_{
L^1}\simeq \mu_{mq} (\CC^n)\simeq \|\mu_{(mq,r,D)}(z_k)\|_{\ell^1}.
   \end{equation}
\end{enumerate}
\end{thm}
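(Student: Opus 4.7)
The plan is to establish the two non-trivial equivalences (i)$\Leftrightarrow$(vi) and (i)$\Leftrightarrow$(ii), together with the geometric chain (iii)$\Leftrightarrow$(iv)$\Leftrightarrow$(v)$\Leftrightarrow$(vi); the norm equivalences in \eqref{fourequal} will be quantitative byproducts of these implications.

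The equivalences among (iii)--(vi) are of a geometric/measure-theoretic nature. Switching the order of integration gives
\[
\|\widetilde{\mu}_{(t, mq)}\|_{L^1}=\int_{\CC^n}(1+|z|)^{-mq}\,d\mu(z)\int_{\CC^n}e^{-t\alpha|z-w|^2/2}\,dV(w)\simeq \mu_{mq}(\CC^n),
\]
and an analogous Fubini argument, combined with $(1+|\zeta|)\simeq (1+|z|)$ whenever $\zeta\in D(z,r)$ and the fact that $\mathrm{Vol}(D(z,r))$ is constant, yields $\|\mu_{(mq,r,D)}\|_{L^1}\simeq \mu_{mq}(\CC^n)$. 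Passing to the averaging sequence uses Lemma \ref{covering}: covering $\CC^n$ by the dilated balls $D(z_k,2r)$ with bounded multiplicity $N_{\max}$ shows $\mu_{mq}(\CC^n)\lesssim \sum_k(1+|z_k|)^{-mq}\mu(D(z_k,2r))$, while disjointness of $D(z_k,r/2)$ provides the reverse inequality, after a standard doubling argument to remove the apparent dependence on the radius.

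The implication (vi)$\Rightarrow$(i) is immediate from the pointwise envelope estimate $(1+|z|)^m|f(z)|e^{-\alpha|z|^2/2}\lesssim \|f\|_{(\infty,m)}$ coming from \eqref{normeqqual2}, which yields $|f(z)|^q e^{-\alpha q|z|^2/2}\lesssim \|f\|_{(\infty,m)}^q(1+|z|)^{-mq}$; integrating against $\mu$ gives $\|\mu\|^q\lesssim \mu_{mq}(\CC^n)$. Once (vi) is known to hold, (i)$\Rightarrow$(ii) follows by dominated convergence: a uniformly bounded sequence $(f_j)$ with $f_j\to 0$ uniformly on compacta is pointwise dominated by $C(1+|z|)^{-mq}\in L^1(d\mu)$, whence $\int|f_j|^q e^{-\alpha q|z|^2/2}\,d\mu\to 0$. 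The reverse (ii)$\Rightarrow$(i) is trivial.

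The main obstacle is (i)$\Rightarrow$(vi). Single normalized kernels $k_w/(1+|w|)^m$ only recover $\mu_{(mq,r,D)}\in L^\infty$, which is too weak. To upgrade to global integrability I will use a randomization argument: let $(\epsilon_k)$ be independent $\pm 1$-valued Rademacher variables and set
\[
F_\epsilon(z)=\sum_{k\in\N}\epsilon_k\frac{k_{z_k}(z)}{(1+|z_k|)^m}.
\]
Using $(1+|z|)\leq (1+|z_k|)(1+|z-z_k|)$ together with the elementary lattice estimate $\sum_k g(z-z_k)\lesssim \int g(w)\,dV(w)$ for non-negative $g$, I will show that
\[
(1+|z|)^m|F_\epsilon(z)|e^{-\alpha|z|^2/2}\lesssim \int_{\CC^n}(1+|w|)^m e^{-\alpha|w|^2/2}\,dV(w)<\infty,
\]
so $\|F_\epsilon\|_{(\infty,m)}\lesssim 1$ uniformly in the signs. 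Applying (i) to $F_\epsilon$, taking expectations $\mathbb{E}_\epsilon$, and invoking Khintchine's inequality (valid for all $0<q<\infty$) gives
\[
\int_{\CC^n}\Big(\sum_{k}\frac{|k_{z_k}(z)|^2}{(1+|z_k|)^{2m}}\Big)^{q/2} e^{-\alpha q|z|^2/2}\,d\mu(z)\lesssim \|\mu\|^q.
\]
Keeping only the contribution of the lattice point $z_{k_0}$ nearest to $z$ (so that $|z-z_{k_0}|\leq r$ and $(1+|z_{k_0}|)\simeq (1+|z|)$) bounds the integrand from below by a constant multiple of $(1+|z|)^{-mq}$, and this produces $\mu_{mq}(\CC^n)\lesssim \|\mu\|^q$, closing the cycle and yielding the quantitative estimates in \eqref{fourequal}.
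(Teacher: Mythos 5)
Your proposal is correct, and it reaches the theorem by a genuinely different route from the paper in each of its blocks. For the chain (iii)--(vi) the paper invokes Lemma~\ref{lemma1} and then a separate two-sided estimate (\eqref{tesfaa}--\eqref{tesf}) to identify $\|\widetilde{\mu}_{(1,mq)}\|_{L^1}$ with $\mu_{mq}(\CC^n)$; you compute all the $L^1/\ell^1$ quantities directly by Fubini, which is simpler here and, under the stated definition of the $(t,s)$-Berezin transform (weight on the $d\mu$-variable), gives $\|\widetilde{\mu}_{(t,mq)}\|_{L^1}\simeq\mu_{mq}(\CC^n)$ for every $t$ in one line. For sufficiency the paper passes through the averaging function via \eqref{need}, whereas you use the pointwise envelope \eqref{appl} to get (vi)$\Rightarrow$(i) directly, and your dominated-convergence proof of (i)$\Rightarrow$(ii), with dominant $C(1+|z|)^{-mq}\in L^1(d\mu)$, replaces the paper's truncation-at-radius-$R$ argument borrowed from Theorem~\ref{strongcarlesonmeasure}; your treatment of (ii)$\Rightarrow$(i) as trivial matches the paper's own stance (a one-line scaling argument, applying the vanishing condition to $2^{-j}f_j$, makes it airtight). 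The essential implication (i)$\Rightarrow$(v)/(vi) is in both cases Luecking's randomization/Khinchine idea, but the paper runs it through the atomic decomposition $z^\beta f=\sum_j c_j k_{z_j}$ of \cite{SJR,RW} and then sets $c_j\equiv 1$, while you build the test functions $F_\epsilon=\sum_k\epsilon_k k_{z_k}(1+|z_k|)^{-m}$ directly and verify $\sup_\epsilon\|F_\epsilon\|_{(\infty,m)}\lesssim 1$ by hand; this buys independence from the cited decomposition theorem (only the elementary synthesis bound $\ell^\infty\to\mathcal{F}_{(m,\alpha)}^\infty$ is needed), at the cost of a lattice-sum estimate. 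Two small points should be tightened in writing it up: the inequality $\sum_k g(z-z_k)\lesssim\int g\,dV$ is not literally true for Gaussian-type $g$ with the same $g$ on both sides -- for the $r$-separated lattice you should compare $g(z-z_k)$ with the average over $D(z_k,r/2)$ of a slightly relaxed radial majorant (e.g. $e^{-(\alpha-\epsilon)|z-w|^2/2}$) and then sum using disjointness; and in the averaging-sequence equivalence the upper bound and the independence of the radius are best justified by the finite multiplicity of Lemma~\ref{covering} (or Lemma~\ref{oneforall}) rather than by disjointness alone. With these routine repairs your argument is complete and delivers the norm estimates \eqref{fourequal}.
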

 The four  results above unify and extend a number of recent results in
the area. For example when $m=0$,  while  the first three of the   results  simplify to   results obtained
in \cite{ZHXL}, Theorem~\ref{boundedinfinity} simplifies  to  a  result obtained in \cite{TM}. On the other hand, when $p=q$ the first two theorems give  Theorem~21 and Theorem~22 of \cite{RCKZ}. If $m=0$ and $p=q=2,$ then the first two theorems again simplify to
results obtained in  \cite{JIKZ}.
\section{ Weighted composition operators on Fock--Sobolev spaces}
Let $H(\CC^n)$ denotes the space of entire functions on $\CC^n.$ Each pair of entire functions $(\psi, u)$  induces a weighted composition operator $uC_\psi f = u (f\circ\psi )$  on
$H(\CC^n)$.  Questions about boundedness, compactness, and other operator theoretic properties
of $uC_\psi$ expressed in terms of function theoretic conditions on $u$ and $\psi$ have been a subject
of high interest,  and  have been studied by several authors  in various function spaces. The Schatten class
membership properties of $uC_\psi$ on $\mathcal{F}_{(m,\alpha)}^2$ has recently been studied in \cite{TM1}. In this section, we will study  the bounded and compact mapping properties of $uC_\psi$ when it acts between different  weighted Fock--Sobolev spaces. We will also estimate the norm and essential norm of $uC_\psi$ in terms of certain Berezin type integral  transforms.   The approach we intend to follow links some of these properties  of  $uC_\psi$ with the $(p,q)$ Fock--Carleson measures which allows us to apply   the results obtained in the previous section.  Indeed, this  offers  a simple example where the $(p,q)$ Fock--Carleson   measures   find some applications in operator theory.

 Our  results  on $uC_\psi$ will be  expressed in terms of the  function
 \begin{eqnarray*}
 B^\infty_{(m, \psi)} (|u|)(z)
 = \frac{|z|^m |u(z)|}{(1+|\psi(z)|)^m} e^{\frac{\alpha}{2}\big(|\psi(z)|^2-|z|^2\big)}
 \end{eqnarray*} and a Berezin type integral transform
 \begin{align*}
B_{(m,\psi)}(|u|^p)(w)= \int_{\CC^n} \frac{|k_w(\psi(z))|^p}{(1+|\psi(z)|)^{mp}} |u(z)|^p|z|^{pm} e^{-\frac{\alpha
p}{2}|z|^2}dV(z).
\end{align*}
\subsection{Bounded and compact $uC_\psi$}
\begin{thm}\label{bounded}
Let $0<p\leq q<\infty$ and $(u,\psi)$ be a pair of entire functions.  Then
$uC_{\psi}: \mathcal{F}_{(m,\alpha)}^p \to \mathcal{F}_{(m,\alpha)}^q$ is
\begin{enumerate}
 \item bounded if and only if $ B_{(m,\psi)}(|u|^q)$ belongs to $ L^\infty$.
Moreover, we have
\begin{equation}
\label{norm1estimate} \|uC_\psi\| \simeq  \|B_{(m,\psi)}(|u|^q)\|_{L^\infty}^{1/q}.
\end{equation}
\item compact if and only if $$\lim_{|z|\to \infty}
B_{(m, \psi)}(|u|^q) (z)= 0.$$
\end{enumerate}
\end{thm}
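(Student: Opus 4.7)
The plan is to reduce the operator-theoretic statement to the Carleson-measure theorems already proved (Theorem~\ref{carlesonmeasure} and Theorem~\ref{compactcarleson}) by introducing the natural pullback measure associated to the pair $(u,\psi)$. Concretely, define a positive Borel measure on $\CC^n$ by
\begin{equation*}
\mu_{u,\psi,q}(E)= \int_{\psi^{-1}(E)} |u(z)|^q |z|^{mq} e^{\frac{\alpha q}{2}(|\psi(z)|^2-|z|^2)}\, dV(z).
\end{equation*}
Using the equivalent norm \eqref{normequal} and a standard change-of-variable in the image, one checks that
\begin{equation*}
\|uC_\psi f\|_{(q,m)}^q \simeq \int_{\CC^n} |f(\psi(z))|^q |u(z)|^q |z|^{mq} e^{-\frac{\alpha q}{2}|z|^2}\, dV(z)
= \int_{\CC^n}|f(w)|^q e^{-\frac{\alpha q}{2}|w|^2}\, d\mu_{u,\psi,q}(w).
\end{equation*}
Hence $uC_\psi:\mathcal{F}_{(m,\alpha)}^p\to\mathcal{F}_{(m,\alpha)}^q$ is bounded precisely when $\mu_{u,\psi,q}$ is a $(p,q)$ Fock--Carleson measure, and compact precisely when it is a $(p,q)$ vanishing Fock--Carleson measure.

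With that reduction in hand, the next step is a direct computation identifying the Berezin transform $\widetilde{(\mu_{u,\psi,q})}_{(t,mq)}$ with the transform $B_{(m,\psi)}(|u|^q)$ appearing in the statement. Choosing $t=q$ and unwinding the factor $|k_w(\psi(z))|^q = e^{-\frac{\alpha q}{2}|\psi(z)-w|^2}e^{\frac{\alpha q}{2}|\psi(z)|^2}$, the pullback gives
\begin{equation*}
\widetilde{(\mu_{u,\psi,q})}_{(q,mq)}(w)
= \int_{\CC^n}\frac{|k_w(\psi(z))|^q}{(1+|\psi(z)|)^{mq}}|u(z)|^q |z|^{mq} e^{-\frac{\alpha q}{2}|z|^2}\, dV(z)
= B_{(m,\psi)}(|u|^q)(w).
\end{equation*}
Applying Theorem~\ref{carlesonmeasure}(ii) with this identity gives the boundedness characterization and the norm estimate \eqref{norm1estimate}, since $\|uC_\psi\|^q\simeq \|\mu_{u,\psi,q}\|\simeq \|B_{(m,\psi)}(|u|^q)\|_{L^\infty}$. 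The compactness half then follows from Theorem~\ref{compactcarleson}(ii) once I can justify that compactness of $uC_\psi$ is itself equivalent to $\mu_{u,\psi,q}$ being a $(p,q)$ vanishing Fock--Carleson measure.

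This last equivalence is the main obstacle and requires the usual test-function machinery for spaces of entire functions. In one direction, if $uC_\psi$ is compact and $f_j$ is a bounded sequence in $\mathcal{F}_{(m,\alpha)}^p$ converging to zero uniformly on compact subsets of $\CC^n$, I would argue that $\|uC_\psi f_j\|_{(q,m)}\to 0$ by extracting a norm-convergent subsequence and using the pointwise estimate $|g(z)|\lesssim (1+|z|)^{-m}e^{\frac{\alpha}{2}|z|^2}\|g\|_{(q,m)}$ coming from \eqref{normequal}--\eqref{normeqqual2}; via the displayed identity this forces the integral against $d\mu_{u,\psi,q}$ to vanish. For the converse, the standard step is to verify that any bounded sequence in $\mathcal{F}_{(m,\alpha)}^p$ has a subsequence converging uniformly on compact sets (by Montel's theorem together with the Fock-type pointwise bounds) and then invoke the definition of vanishing Carleson measure together with the integral identity above to obtain norm-convergence of $uC_\psi f_j$ to zero. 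Care has to be taken when $p<1$ since $\mathcal{F}_{(m,\alpha)}^p$ is only a complete metric space, but compactness can still be formulated sequentially and the Montel argument goes through unchanged. Once this equivalence is in place, Theorem~\ref{compactcarleson} combined with the Berezin identity above finishes the proof.
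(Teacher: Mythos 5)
Your proposal is correct and follows essentially the same route as the paper: your measure $\mu_{u,\psi,q}$ is exactly the paper's $\lambda_{(m,q)}$ (the pullback $\theta_{(m,q)}$ weighted by $e^{\frac{q\alpha}{2}|w|^2}$), the reduction of boundedness/compactness to the $(p,q)$ (vanishing) Fock--Carleson property is the same (the sequential compactness criterion you sketch is the paper's Lemma~\ref{uniformconv}), and the identification $\widetilde{(\mu_{u,\psi,q})}_{(q,mq)}=B_{(m,\psi)}(|u|^q)$ followed by Theorems~\ref{carlesonmeasure} and~\ref{compactcarleson} is precisely the paper's argument. The only nitpick is the harmless notational slip $\|uC_\psi\|^q\simeq\|\mu_{u,\psi,q}\|$, which should carry the exponent $q$ consistently with \eqref{normestimate1}; the final estimate \eqref{norm1estimate} is unaffected.
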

Note that like in Theorem~\ref{carlesonmeasure},  the conditions both in (i) and (ii) are independent of the
 exponent $p$ apart from the fact that $p$ should not be exceeding $q$. In other words,
  if there exists a $p>0$ for which  $ uC_\psi$ is bounded (compact) from $ \mathcal{F}_{(m,\alpha)}^p $ to $ \mathcal{F}_{(m,\alpha)}^q$, then it is also  bounded (compact) from $ \mathcal{F}_{(m,\alpha)}^{p_1} $
  to $ \mathcal{F}_{(m,\alpha)}^q$ for every other $p_1 \leq q$.

A natural question is whether there exists an interplay between the
two symbols  $u$ and $\psi$  in inducing bounded and compact
operators $uC_\psi$. We first observe that by the classical
 Liouville's theorem   a nonconstant function $u$ can not decay. The following is a simple consequence of
 this fact.
\begin{cor}\label{cor3}
Let $0<p\leq q<\infty$ and $(u,\psi)$ be a pair of entire functions. If $u \neq 0$ and
$uC_{\psi}: \mathcal{F}_{(m,\alpha)}^p \to \mathcal{F}_{(m,\alpha)}^q$ is
 bounded, then $\psi(z)= Az+B$ where $A$ is an $n\times n$ matrix, $\|A\|\leq 1$ and $B$ is an $ n\times 1$ matrix such that $\langle Aw, B\rangle=0$ whenever  $|Aw|=|w|$ for some $w \in \CC^n.$ Moreover, if  $uC_{\psi}$ is compact, then $\|A\|< 1$ where
 $ \|A\|$ refers to the operator norm of matrix $A$.
\end{cor}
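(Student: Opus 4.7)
My plan is to convert the operator-theoretic hypothesis into a pointwise growth estimate on $\psi$ via Theorem~\ref{bounded}, and then invoke classical entire-function theory. The key auxiliary quantity is
$$B^\infty_{(m,\psi)}(|u|)(z) = \frac{|z|^m|u(z)|}{(1+|\psi(z)|)^m}\, e^{\frac{\alpha}{2}(|\psi(z)|^2 - |z|^2)}.$$
First, Theorem~\ref{bounded}(i) forces $B_{(m,\psi)}(|u|^q)\in L^\infty$. Applying $uC_\psi$ to the normalized Fock--Sobolev kernel centered at $\psi(z)$ and combining with the standard pointwise growth estimate for functions in $\mathcal{F}_{(m,\alpha)}^q$ upgrades this to the uniform pointwise bound $B^\infty_{(m,\psi)}(|u|)(z)\lesssim \|uC_\psi\|$ on $\CC^n$.

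Next, I would localize near a point $z_0$ with $u(z_0)\neq 0$, which exists because $u\not\equiv 0$. On a small neighborhood where $|u|$ is bounded below, taking logarithms in the pointwise bound yields $|\psi(z)|^2-|z|^2\lesssim m\log(1+|\psi(z)|)+O(\log(1+|z|))+O(1)$, so each coordinate $\psi_j$ grows at most linearly on that neighborhood. Propagating the bound by taking $z_0$ along a sequence with $|z_0|\to\infty$ and $u(z_0)\neq 0$, and then applying Cauchy's estimates to $\psi_j$, I would conclude that each coordinate is an affine polynomial, so $\psi(z)=Az+B$. Substituting this and testing along $z=tw$ with $|w|=1$ as $t\to\infty$,
$$|\psi(tw)|^2-|tw|^2=(|Aw|^2-1)t^2+2t\,\operatorname{Re}\langle Aw,B\rangle+|B|^2$$
must remain sublogarithmic; the quadratic coefficient is therefore non-positive, yielding $\|A\|\leq 1$. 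When $|Aw|=|w|$, replacing $w$ by $e^{i\theta}w$ preserves $|Aw|^2$ while rotating the linear coefficient to $2t\,\operatorname{Re}(e^{i\theta}\langle Aw,B\rangle)$; insisting on boundedness for every $\theta$ forces $\langle Aw,B\rangle=0$.

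For the compact assertion, Theorem~\ref{bounded}(ii) together with the same extraction gives $B^\infty_{(m,\psi)}(|u|)(z)\to 0$ as $|z|\to\infty$. Assume for contradiction that $\|A\|=1$, and let $E=\ker(I-A^*A)\neq\{0\}$ be the top eigenspace of $A^*A$. Self-adjointness of $A^*A$ makes $E^\perp$ invariant, so for any unit $w\in E$ and $v\in E^\perp$ one has $\langle Aw,Av\rangle=0$; combined with the already established $\langle Aw,B\rangle=0$, a direct computation shows that $|\psi(\zeta w+v)|^2-|\zeta w+v|^2$ is constant in $\zeta\in\CC$. Since the ratio $|\zeta w+v|^m/(1+|\psi(\zeta w+v)|)^m\to 1$ as $|\zeta|\to\infty$, the decay condition reduces to $|u(\zeta w+v)|\to 0$; Liouville's theorem then forces the entire function $\zeta\mapsto u(\zeta w+v)$ to vanish identically. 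Varying $w$ over the unit sphere of $E$ and $v$ over $E^\perp$ one checks $\bigcup_{w\in E,\,|w|=1}(\CC w\oplus E^\perp)=\CC^n$, so $u\equiv 0$, contradicting $u\neq 0$, and we conclude $\|A\|<1$. The main technical hurdle I anticipate is the bootstrap in the second paragraph: propagating the local growth bound on $\psi$ across the zero set of $u$ while absorbing the logarithmic defect from $\log|u|$ requires a careful choice of base points and a global continuity/Phragm\'en--Lindel\"of argument.
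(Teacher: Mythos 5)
Your opening step is the same as the paper's: testing against the kernels and using the pointwise sub-mean-value estimate to extract the uniform bound $B^\infty_{(m,\psi)}(|u|)(z)\lesssim \|uC_\psi\|$ (the paper's inequality \eqref{forcont}), and your later steps (the rotation/ray argument giving $\langle Aw,B\rangle=0$ when $|Aw|=|w|$, and especially the compactness argument via $E=\ker(I-A^*A)$, the constancy of $|\psi(\zeta w+v)|^2-|\zeta w+v|^2$ in $\zeta$, and Liouville applied to $\zeta\mapsto u(\zeta w+v)$) are sound — in fact more explicit than the paper, which only says the compact case follows by ``a simple modification.''

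The genuine gap is exactly the step you flag as a ``technical hurdle'' and then leave unresolved: passing from the pointwise bound to $\psi(z)=Az+B$ with $\|A\|\le 1$. Your plan — localize near points where $u\neq 0$, take logarithms, and then apply Cauchy's estimates — does not work as stated. The bound $|u(z)|\,|z|^m(1+|\psi(z)|)^{-m}e^{\frac{\alpha}{2}(|\psi(z)|^2-|z|^2)}\le C$ controls $\psi(z)$ only where $|u(z)|$ is not too small; a nonzero entire $u$ may vanish on a large analytic variety or be super-exponentially small (of order $e^{-c|z|^2}$ or worse) along whole rays and regions, and at such points the logarithmic inequality yields at best $|\psi(z)|^2\le(1+c)|z|^2$ with $c>0$, or nothing at all — not enough for $\|A\|\le 1$, and Cauchy's estimates require $|\psi|\lesssim |z|$ on full spheres, which you never obtain from bounds at a sequence of good base points. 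A ``careful choice of base points and a Phragm\'en--Lindel\"of argument'' is a hope, not an argument, and this is precisely where the paper does something different: it proves the claim $\limsup_{|z|\to\infty}|\psi(z)|/|z|\le 1$ by contradiction using integral means of $|u|^q$ — quantities which, because $u\not\equiv 0$, are bounded away from zero, whereas a sequence $(z_j)$ with $\limsup |\psi(z_j)|/|z_j|>1$ would force them to decay like $|z_j|^{-mq}e^{-\frac{\alpha q}{2}|z_j|^2(w_j^2-1)}\to 0$. Replacing pointwise values of $u$ by such means (or some equivalent device that neutralizes the zero set and the small values of $u$) is the missing idea; without it the affineness of $\psi$, on which everything else in your argument rests, is not established. (The same caveat about possible decay of $u$ along a single ray also touches your $\theta$-rotation step, though there you are at the same level of rigor as the paper itself.)
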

 By setting $u=1$ and simplifying the conditions in Theorem~\ref{bounded}, one can easily see that the linear   forms for $\psi$
  are both necessary and sufficient  for $C_{\psi}: \mathcal{F}_{(m,\alpha)}^p \to \mathcal{F}_{(m,\alpha)}^q$ to be bounded (compact). This fact together with Corollary~\ref{cor3} ensures that boundedness  of $uC_\psi$ implies boundedness of
 $C_\psi$ while the converse in general fails.  The same conclusion holds for compactness. Particular cases of theses conclusions  could be also read  in \cite{CMS,CCK}.
\begin{proof}
Observe that
\begin{align*}
B_{(m, \psi)}(|u|^q) (z)&\geq \int_{\CC^n} \frac{|k_w(\psi(z))|^q}{(1+|\psi(z)|)^{mq}} |u(z)|^q|z|^{qm} e^{-\frac{\alpha
q}{2}|z|^2}dV(z)\\
&\gtrsim  \frac{|k_w(\psi(z))|^q}{(1+|\psi(z|)^{mq}} |u(w)|^q|z|^{qm} e^{-\frac{\alpha
q}{2}|z|^2}
\end{align*} for all $z,w \in \CC^n.$ Applying Theorem~\ref{bounded} and setting $w= \psi(z)$ in particular  gives
\begin{align}
\label{forcont}
\infty > \sup_{w\in \CC^n}\frac{|k_w(\psi(z))|^q|u(z)|^q|z|^{qm}}{(1+|\psi(z|)^{mq}e^{\frac{\alpha
q}{2}|w|^2}}  \geq \frac{|u(z)|^q|z|^{qm}}{(1+|\psi(z)|)^{mq}} e^{\frac{\alpha
q}{2}|\psi(z)|^2-|z|^2}.
\end{align} Indeed, we claim that
\begin{align}
\label{claim}
\limsup_{|z|\to \infty} \frac{|\psi(z)|}{|z|} \leq 1.
\end{align} We argue by contradiction, and suppose \eqref{claim} fails. Then there exists a sequence $(z_j)$ such that
$|z_j|\to \infty $ as $j\to \infty$ and $$\limsup_{|z_j|\to \infty} |\psi(z_j)|/|z_j| >1.$$ 
For nobility,  we set $w_j= |\psi(z_j)|/|z_j|,$ and observe 
\begin{align}
\label{contradiction}
\limsup_{j\to \infty} M_{\infty}(u^q,(1+|\psi(z_j)|)^{mq}) \lesssim \limsup_{j\to \infty}\frac{1}{|z_j|^{qm}e^{\frac{\alpha
q}{2}(|\psi(z_j)|^2-|z_j|^2)}}\nonumber\\
=\limsup_{j\to \infty}
 \frac{1 }{|z_j|^{mq}e^{\frac{\alpha q}{2}|z_j|^2(w_j^2-1)}}=0
\end{align} which gives a contradiction  as   $u$ is a constant entire function and $M_{\infty}(u^q,(1+|\psi(z_j)|)^{mq})$ is 
the integral mean of the function $|u|^q$. Thus, \eqref{claim} implies $\psi(z)= Az+ B$  for some  $A$ an $n\times n$ matrix with $\|A\|<1$ and $B\in \CC^n.$

Let now $\eta$ be a point in $\CC^n$ such that $|A\eta|= |\eta|$. We may further assume that $|\eta|=1$ and $A\eta= \eta$ where the latter is due to unitary change of variables; see the proof  of \cite[Theorem~1]{CMS}. If $z= t\tau \eta$ where $ |\tau|=1$  is  a constant for which $\tau\langle A\eta, B\rangle= |\langle A\eta, B\rangle|$, then
\begin{align}
\label{frac}
\frac{|u(z)|^q|z|^{qm}}{(1+|\psi(z)|)^{mq}} e^{\frac{\alpha
q}{2}|\psi(z)|^2-|z|^2}=\frac{|u(t\tau\eta)|e^{\frac{\alpha q}{2}( |B|^2+ 2t|\langle A\eta,B\rangle|)}}{1+t^{-2}|B|^2 + 2t^{-1}|\langle A\eta,B\rangle|}.
\end{align} By \eqref{forcont}, the fraction \eqref{frac} has to be finite as $t\to \infty,$ and this holds only if $\langle A\eta,B\rangle=0$
as desired.

If, in addition, $uC_\psi$ is compact, then by part (ii) of Theorem~\ref{bounded},
\begin{align}
\label{comcom}
\lim_{|z|\to \infty}\frac{|u(z)|^q|z|^{qm}}{(1+|\psi(z)|)^{mq}} e^{\frac{\alpha
q}{2}|\psi(z)|-|z|^2}=0.
\end{align} A simple modification of the above arguments show that \eqref{comcom} holds only if
$\psi(z)= Az+ B$ with $\|A\|<1.$
\end{proof}
We now consider the case where $p>q$. Mapping $\mathcal{F}_{(m,\alpha)}^p$ into $ \mathcal{F}_{(m,\alpha)}^q$
gives the following  stronger integrability  conditions as one would expect.
\begin{thm} \label{bounded1} Let $0<q<p<\infty$ and $(u,\psi)$ be a pair of entire functions.  Then
the following statements are equivalent.
\begin{enumerate}
 \item $uC_{\psi}: \mathcal{F}_{(m,\alpha)}^p \to \mathcal{F}_{(m,\alpha)}^q$ is bounded;
  \item $uC_{\psi}: \mathcal{F}_{(m,\alpha)}^p \to \mathcal{F}_{(m,\alpha)}^q$ is  compact;
  \item  $B_{(m,\psi)}(|u|^q)$ belongs to $ L^{\frac{p}{p-q}}$.  We further have the norm estimate
\begin{equation}
\label{normless1} \|uC_\psi\| \simeq \|
B_{(m, \psi)}(|u|^q)\|_{L^{\frac{p}{p-q}}}^{\frac{p-q}{p}}.
\end{equation}
\end{enumerate}
\end{thm}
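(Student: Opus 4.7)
The plan is to reduce Theorem~\ref{bounded1} to Theorem~\ref{strongcarlesonmeasure} via a pushforward construction. Introduce the nonnegative Borel measure
\begin{equation*}
\mu(E)=\int_{\psi^{-1}(E)} |u(z)|^q|z|^{mq}\, e^{\frac{\alpha q}{2}(|\psi(z)|^2-|z|^2)}\,dV(z),
\end{equation*}
that is, the pushforward of $|u(z)|^q|z|^{mq} e^{\frac{\alpha q}{2}(|\psi(z)|^2-|z|^2)}\,dV(z)$ under $\psi$. By the equivalent norm \eqref{normequal} and the change-of-variable formula, for every entire $f$,
\begin{equation*}
\|uC_\psi f\|_{(q,m)}^q \simeq \int_{\CC^n} |f(w)|^q e^{-\frac{\alpha q}{2}|w|^2}\,d\mu(w).
\end{equation*}
Consequently $uC_\psi:\mathcal{F}_{(m,\alpha)}^p\to\mathcal{F}_{(m,\alpha)}^q$ is bounded (resp.\ compact) if and only if $\mu$ is a $(p,q)$ Fock--Carleson (resp.\ vanishing $(p,q)$ Fock--Carleson) measure, and $\|uC_\psi\|\simeq\|\mu\|$.

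For $0<q<p<\infty$, Theorem~\ref{strongcarlesonmeasure} applied to $\mu$ simultaneously delivers the equivalence of the Carleson and vanishing Carleson properties and their further equivalence with $\widetilde{\mu}_{(t,mq)}\in L^{p/(p-q)}$ for some (or every) $t>0$, with $\|\mu\|^q\simeq\|\widetilde{\mu}_{(t,mq)}\|_{L^{p/(p-q)}}$. It therefore suffices to identify $\widetilde{\mu}_{(t,mq)}$ with $B_{(m,\psi)}(|u|^q)$. Choosing $t=q$ and unfolding the pushforward,
\begin{equation*}
\widetilde{\mu}_{(q,mq)}(w)=\int_{\CC^n}\frac{|u(z)|^q|z|^{mq}}{(1+|\psi(z)|)^{mq}}\, e^{-\frac{q\alpha}{2}|\psi(z)-w|^2+\frac{q\alpha}{2}|\psi(z)|^2-\frac{q\alpha}{2}|z|^2}\,dV(z).
\end{equation*}
A direct expansion gives $-\frac{q\alpha}{2}|\psi(z)-w|^2+\frac{q\alpha}{2}|\psi(z)|^2=\alpha q\,\mathrm{Re}\langle\psi(z),w\rangle-\frac{q\alpha}{2}|w|^2$, so the exponential factor collapses to $|k_w(\psi(z))|^q$ by the definition of the normalized reproducing kernel, and $\widetilde{\mu}_{(q,mq)}=B_{(m,\psi)}(|u|^q)$. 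Combining this identification with the norm comparability yields (iii) and the estimate \eqref{normless1}.

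The main point that requires care is the compactness half of the pushforward correspondence: showing that compactness of $uC_\psi$ is equivalent to $\mu$ being a vanishing $(p,q)$ Fock--Carleson measure. The standard approach uses that bounded subsets of $\mathcal{F}_{(m,\alpha)}^p$ form normal families, by Lemma~\ref{Fourier} together with subharmonicity estimates. Any sequence witnessing noncompactness of $uC_\psi$ may therefore be arranged, after passing to a subsequence, to converge to $0$ locally uniformly on $\CC^n$, at which point the norm identity above translates operator-level compactness into the vanishing Carleson test. Once this link is secured, the coincidence of Carleson and vanishing Carleson measures in the range $q<p$, granted by Theorem~\ref{strongcarlesonmeasure}, closes the argument and shows (i)$\Leftrightarrow$(ii)$\Leftrightarrow$(iii).
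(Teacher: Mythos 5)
Your proposal is correct and follows essentially the same route as the paper: your pushforward measure $\mu$ is exactly the paper's $\lambda_{(m,q)}$ (i.e.\ $e^{\frac{q\alpha}{2}|z|^2}d\theta_{(m,q)}(z)$), the reduction to Theorem~\ref{strongcarlesonmeasure} and the computation identifying $\widetilde{\mu}_{(q,mq)}$ with $B_{(m,\psi)}(|u|^q)$ are the same, and your compactness link via normal families and the norm identity is precisely the content of Lemma~\ref{uniformconv} used in the paper.
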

Following a similar approach as in the proof of Corollary~\ref{cor3}, we observe that the  $L^{p/(p-q)}$ integrability of $B_{(m,\psi)}(|u|^q)$  restricts  further   $\psi$ to have only the linear  form
$ \psi(z)= Az+B$  with  $\|A\|<1.$
 \begin{thm}\label{bounded3}
Let $0<q< \infty$ and $(u,\psi)$ be a pair of entire functions.   Then  the
following  statements are  equivalent.
\begin{enumerate}
\item $uC_{\psi}: \mathcal{F}_{(m,\alpha)}^\infty \to \mathcal{F}_{(m,\alpha)}^q$ is
  bounded;
  \item $uC_{\psi}: \mathcal{F}_{(m,\alpha)}^\infty \to \mathcal{F}_{(m,\alpha)}^q$ is compact;
  \item  $B_{(m, \psi)} (|u|^q)$ belongs to $L^1.$  Furthermore, we have the asymptotic norm estimate
  \begin{equation}
  \label{normless2}
  \|uC_\psi\| \simeq \|B_{(m,\psi)}
  (|u|^q)\|_{L^1}^{1/q}.
  \end{equation}
  \end{enumerate}
 \end{thm}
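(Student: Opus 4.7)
The strategy mirrors the proofs of Theorems~\ref{bounded} and~\ref{bounded1}: associate to the pair $(u,\psi)$ a positive Borel measure $\mu=\mu_{u,\psi}$ on $\CC^n$ whose $(\infty,q)$ Fock--Carleson property encodes the boundedness of $uC_\psi$, and then apply Theorem~\ref{boundedinfinity} to translate that condition into the desired $L^1$ statement on a Berezin transform.

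Concretely, I would define $\mu$ as the push-forward under $\psi$ of the absolutely continuous measure
\begin{equation*}
C_{(q,m,n)}\,|u(z)|^q\,|z|^{mq}\,e^{\frac{\alpha q}{2}(|\psi(z)|^2-|z|^2)}\,dV(z).
\end{equation*}
Using the equivalent norm \eqref{normequal}, an elementary change of variables yields
\begin{equation*}
\|uC_\psi f\|_{(q,m)}^q \;=\; \int_{\CC^n} |f(w)|^q\,e^{-\frac{\alpha q}{2}|w|^2}\,d\mu(w)
\end{equation*}
for every entire $f$. Hence $uC_\psi:\mathcal{F}_{(m,\alpha)}^\infty\to\mathcal{F}_{(m,\alpha)}^q$ is bounded (respectively compact) precisely when $\mu$ is an $(\infty,q)$ Fock--Carleson (respectively vanishing Fock--Carleson) measure, with $\|uC_\psi\|^q\simeq\|\mu\|$. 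The compactness equivalence uses the pointwise bound $|f(w)|\lesssim(1+|w|)^{-m}e^{\frac{\alpha}{2}|w|^2}\|f\|_{(\infty,m)}$, which makes any bounded sequence in $\mathcal{F}_{(m,\alpha)}^\infty$ a normal family, together with a standard test-sequence argument.

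At this point Theorem~\ref{boundedinfinity} does the rest of the work: the $(\infty,q)$ Fock--Carleson and $(\infty,q)$ vanishing Fock--Carleson conditions on $\mu$ coincide, and both are equivalent to $\widetilde{\mu}_{(t,mq)}\in L^1$ for some (or any) $t>0$, with $\|\mu\|\simeq\|\widetilde{\mu}_{(t,mq)}\|_{L^1}$. This immediately produces the equivalence of (i) and (ii). Taking $t=q$ and using the identity $|k_w(\zeta)|^q=e^{\frac{\alpha q}{2}(|\zeta|^2-|\zeta-w|^2)}$, a direct computation gives
\begin{equation*}
\widetilde{\mu}_{(q,mq)}(w) \;\simeq\; B_{(m,\psi)}(|u|^q)(w),
\end{equation*}
which closes the chain of equivalences and yields the norm estimate \eqref{normless2}.

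I expect the main technical obstacle to be the clean justification of the compactness $\Leftrightarrow$ vanishing Carleson equivalence in this $L^\infty$ setting. One direction requires testing with appropriately normalized reproducing kernels (scaled so that they have bounded $(\infty,m)$-norm and converge to zero locally uniformly as $|w|\to\infty$), while the other needs a normality plus exhaustion argument to convert subsequential local uniform convergence to zero into vanishing of the Carleson integral. Once this pattern — already present in the proofs of Theorems~\ref{bounded} and~\ref{bounded1} — is adapted to the $p=\infty$ case, the remaining verifications collapse to the direct calculations above.
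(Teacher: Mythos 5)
Your proposal is correct and follows essentially the same route as the paper: the measure you push forward under $\psi$ is exactly the paper's $\lambda_{(m,q)}$ (defined via the pull-back measure $\theta_{(m,q)}$), the boundedness/compactness of $uC_\psi$ is translated into the $(\infty,q)$ (vanishing) Fock--Carleson property of this measure, Theorem~\ref{boundedinfinity} supplies the equivalences, and the identification $\widetilde{\lambda}_{(q,mq)}=B_{(m,\psi)}(|u|^q)$ with the estimate \eqref{fourequal} gives \eqref{normless2}. The sequential compactness criterion you flag as the technical point is precisely the paper's Lemma~\ref{uniformconv}, so no genuinely different ingredient is involved.
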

 As it  will be seen  in the proofs, the boundedness and compactness  conditions for $uC_\psi$  in Theorems~\ref{bounded}-\ref{bounded3}
 can be equivalently expressed in terms of $(p,q)$ (vanishing) Fock--Carleson measures, averaging functions, and  averaging sequences of appropriately chosen positive measures $\mu$ on $\CC^n$.  .
 \begin{thm}\label{bounded2}
 Let $0<p\leq \infty$ and $(u,\psi)$ be a pair of entire functions.     Then $uC_{\psi}: \mathcal{F}_{(m,\alpha)}^p \to \mathcal{F}_{(m,\alpha)}^\infty$ is
 \begin{enumerate}
 \item   bounded if and only if  $B^\infty_{(m, \psi)} (|u|)$ belongs to $ L^\infty.$  Moreover, we have
\begin{equation}
\label{normbound} \|uC_\psi\|\simeq \|
B^\infty_{(m,\psi)} (|u|)\|_{L^\infty}.
\end{equation}
\item  compact if and only if it is bounded and
 \begin{equation}
 \label{normcomp}
 \lim_{|\psi(z)|\to \infty} B^\infty_{(m,\psi)} (|u|)(z)=
 0.\end{equation}
\end{enumerate}
 \end{thm}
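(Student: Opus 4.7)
The plan is to derive both parts from a single pointwise factorization combined with a standard test-function computation. The key preliminary ingredient is the Fock--Sobolev pointwise estimate
$$
|f(w)|\lesssim \|f\|_{(p,m)}\,\frac{e^{\alpha|w|^2/2}}{(1+|w|)^m},\qquad f\in \mathcal{F}_{(m,\alpha)}^p,
$$
which follows from Lemma~\ref{Fourier} together with the classical pointwise estimate in $\mathcal{F}_\alpha^p$ applied to $z^\beta f$ for multi-indices with $\beta_{sn}=m$ (and applied to $f$ itself to handle small $|w|$).

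For sufficiency in (i), I would rewrite the defining quantity as
$$
|z|^m|u(z)||f(\psi(z))|e^{-\alpha|z|^2/2}= B^\infty_{(m,\psi)}(|u|)(z)\cdot (1+|\psi(z)|)^m |f(\psi(z))|e^{-\alpha|\psi(z)|^2/2},
$$
so that the preliminary estimate bounds the second factor by a constant multiple of $\|f\|_{(p,m)}$; taking the supremum in $z$ then yields $\|uC_\psi f\|_{(\infty,m)}\lesssim \|B^\infty_{(m,\psi)}(|u|)\|_{L^\infty}\|f\|_{(p,m)}$, which is the upper half of \eqref{normbound}. For necessity I test on the normalized kernels
$$
f_w(z):=\frac{k_w(z)}{(1+|w|)^m},
$$
where a direct Gaussian calculation from the equivalent norm \eqref{normequal} gives $\|f_w\|_{(p,m)}\simeq 1$ uniformly in $w$. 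Setting $w=\psi(z)$ and evaluating $uC_\psi f_w$ at the same $z$, the self-kernel identity $k_{\psi(z)}(\psi(z))=e^{\alpha|\psi(z)|^2/2}$ produces exactly $B^\infty_{(m,\psi)}(|u|)(z)$, so $\|B^\infty_{(m,\psi)}(|u|)\|_{L^\infty}\lesssim \|uC_\psi\|$, completing (i).

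For (ii), necessity is obtained by picking a sequence $z_j$ with $|\psi(z_j)|\to\infty$: the family $f_{\psi(z_j)}$ is bounded in $\mathcal{F}_{(m,\alpha)}^p$ and converges to $0$ uniformly on compacta, so compactness of $uC_\psi$ forces $\|uC_\psi f_{\psi(z_j)}\|_{(\infty,m)}\to 0$, and the pointwise lower bound yields $B^\infty_{(m,\psi)}(|u|)(z_j)\to 0$. For sufficiency I would apply the standard ``bounded sequence converging to $0$ uniformly on compacta'' characterization of compactness into Fock-type spaces: given such $(f_j)\subset \mathcal{F}_{(m,\alpha)}^p$, apply the factorization to $f_j$ and split the sup over $z$ at $\{|\psi(z)|\le R\}$ and $\{|\psi(z)|>R\}$. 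On the first set, the uniform convergence of $f_j$ on $\{|w|\le R\}$ combines with the (assumed) boundedness of $B^\infty_{(m,\psi)}(|u|)$ to drive the contribution to $0$ as $j\to\infty$; on the second, hypothesis \eqref{normcomp} together with the preliminary pointwise bound on $f_j$ forces the contribution to $0$ as $R\to\infty$, uniformly in $j$. Letting $j\to\infty$ and then $R\to\infty$ closes the argument.

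The main obstacle is the correct normalization of the test family: the extra $(1+|w|)^m$ denominator in $f_w$ is exactly what the Fock--Sobolev norm requires in order to have $\|f_w\|_{(p,m)}\simeq 1$, and the value $B^\infty_{(m,\psi)}(|u|)(z)$ emerges precisely from pairing this normalization with the self-kernel identity at $w=\psi(z)$. Once that bookkeeping is in place, both parts of the theorem reduce to the single factorization identity above, with no additional analytical input needed.
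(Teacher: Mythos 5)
Your proposal is correct and follows essentially the same route as the paper: the pointwise bound $|f(w)|\lesssim \|f\|_{(p,m)}(1+|w|)^{-m}e^{\alpha|w|^2/2}$ for sufficiency, the normalized kernels $k_w/(1+|w|)^m$ with $w=\psi(z)$ for the lower bounds, and the split of the supremum at $|\psi(z)|\le R$ versus $|\psi(z)|>R$ for compactness. The only (immaterial) difference is that on the region $|\psi(z)|\le R$ you control the product via the assumed boundedness of $B^\infty_{(m,\psi)}(|u|)$, whereas the paper uses $u\in\mathcal{F}_{(m,\alpha)}^\infty$ obtained by testing on $f\equiv 1$; both close the argument identically.
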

  An interesting observation is to replace $\mathcal{F}_{(m,\alpha)}^\infty$ by a
smaller space $\mathcal{F}_{(0,m,\alpha)}^\infty;$ consisting of all analytic
function $f$ such that \begin{equation*}\lim_{|z| \to \infty}
|f(z)||z|^me^{-\frac{\alpha}{2}|z|^2} = 0.\end{equation*} The space
$\mathcal{F}_{(0,m,\alpha)}^\infty$ constitutes a proper Banach subspace of
$\mathcal{F}_{(m,\alpha)}^\infty$ and contains  the   spaces  $\mathcal{F}_{(m,\alpha)}^p$ for
all $p<\infty$. If we replace $\mathcal{F}_{(m,\alpha)}^p$  with  a larger space
$\mathcal{F}_{(0,m,\alpha)}^\infty$ in part (i) of the preceding theorem, the condition remains
unchanged. On the other hand, modifying the arguments used to prove part
(ii) of the theorem shows that the following stronger condition holds
when we replace the target space  with $\mathcal{F}_{(0,m,\alpha)}^\infty$.
\begin{cor}
\label{cor2}Let $0<p< \infty$ and $(u,\psi)$ be a pair of entire functions. Then the  map $uC_{\psi}:
\mathcal{F}_{(m,\alpha)}^p  (\ \text{or}\ \mathcal{F}_{(0,m,\alpha)}^\infty) \to
\mathcal{F}_{(0,m,\alpha)}^\infty$ is compact if and only if
\begin{equation}
\label{limit} \lim_{|z|\to \infty} B^\infty_{(m,\psi)}
(|u|)(z)= 0.\end{equation}
\end{cor}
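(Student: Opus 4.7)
The plan is to establish both directions by leaning on the pointwise growth estimate $|f(z)|\lesssim \|f\|(1+|z|)^{-m}e^{\alpha|z|^2/2}$ valid on Fock--Sobolev spaces together with Theorem~\ref{bounded2}, so that the new content reduces to controlling the regime where $|z|\to\infty$ while $|\psi(z)|$ stays bounded.

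For the sufficiency direction, I would assume $\lim_{|z|\to\infty} B^\infty_{(m,\psi)}(|u|)(z)=0$ and first verify that $uC_\psi$ actually lands in $\mathcal{F}_{(0,m,\alpha)}^\infty$. Applying the pointwise estimate inside the target norm gives
\begin{equation*}
|z|^m|u(z)||f(\psi(z))|e^{-\alpha|z|^2/2}\lesssim \|f\|\cdot B^\infty_{(m,\psi)}(|u|)(z),
\end{equation*}
which tends to zero as $|z|\to\infty$ for any $f$ in either source space. For compactness, I would take a bounded sequence $(f_j)$ that converges to zero uniformly on compact subsets and show $\|uC_\psi f_j\|_{(\infty,m)}\to 0$. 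Given $\varepsilon>0$, pick $R$ with $B^\infty_{(m,\psi)}(|u|)(z)<\varepsilon$ for $|z|>R$; the pointwise bound above controls the tail uniformly in $j$, while on $\{|z|\le R\}$ the quantity $|z|^m|u(z)|e^{-\alpha|z|^2/2}$ is bounded, and $f_j\circ\psi\to 0$ uniformly there since $\psi$ maps the compact ball to a compact set.

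For the necessity direction, compactness of $uC_\psi$ into $\mathcal{F}_{(0,m,\alpha)}^\infty$ implies compactness into the larger $\mathcal{F}_{(m,\alpha)}^\infty$, so Theorem~\ref{bounded2}(ii) already supplies $\lim_{|\psi(z)|\to\infty}B^\infty_{(m,\psi)}(|u|)(z)=0$. To upgrade this to $|z|\to\infty$, I would test against the constant function $1$, which lies in every $\mathcal{F}_{(m,\alpha)}^p$ as well as in $\mathcal{F}_{(0,m,\alpha)}^\infty$. Since $uC_\psi(1)=u$ must belong to $\mathcal{F}_{(0,m,\alpha)}^\infty$, we obtain $|z|^m|u(z)|e^{-\alpha|z|^2/2}\to 0$. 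Then on any sequence $(z_j)$ with $|z_j|\to\infty$ but $|\psi(z_j)|$ bounded by some $M$, the factor $(1+|\psi(z_j)|)^{-m}e^{\alpha|\psi(z_j)|^2/2}$ is uniformly bounded and
\begin{equation*}
B^\infty_{(m,\psi)}(|u|)(z_j)\lesssim |z_j|^m|u(z_j)|e^{-\alpha|z_j|^2/2}\cdot e^{\alpha M^2/2}\longrightarrow 0.
\end{equation*}
Combining the bounded-$|\psi(z)|$ regime with the unbounded-$|\psi(z)|$ regime from Theorem~\ref{bounded2}(ii) gives \eqref{limit}.

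The main obstacle is precisely this upgrade step: Theorem~\ref{bounded2}(ii) only controls the behaviour along sequences where $|\psi(z)|\to\infty$, so one needs an independent handle on the complementary case. The key idea is that the stronger target space $\mathcal{F}_{(0,m,\alpha)}^\infty$ forces $u=uC_\psi(1)$ itself to decay at infinity, and this decay is exactly what is missing. The rest is routine bookkeeping with the pointwise estimate.
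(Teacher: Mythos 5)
Your proposal is correct and takes essentially the same route as the paper: for sufficiency it splits $\CC^n$ into the tail $\{|z|>R\}$, controlled uniformly in $j$ by the pointwise bound \eqref{appl} together with the decay of $B^\infty_{(m,\psi)}(|u|)$, and the compact ball, where uniform convergence of $f_j\circ\psi$ finishes; for necessity it combines Theorem~\ref{bounded2}(ii) (after passing to the larger target $\mathcal{F}_{(m,\alpha)}^\infty$) with the observation that $u=uC_\psi 1$ must lie in $\mathcal{F}_{(0,m,\alpha)}^\infty$, which handles the regime where $|\psi(z)|$ stays bounded. Your explicit check that the image actually lands in $\mathcal{F}_{(0,m,\alpha)}^\infty$ is a harmless (and welcome) addition the paper leaves implicit.
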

We may mention that for the special case $m=0,$ Theorem~\ref{bounded2} and  its corollary  were proved in
\cite{SS}.
\subsection{Essential norm of $uC_\psi$}
Let $\mathscr{H}_1$ and $\mathscr{H}_2$ be Banach spaces. Then the
 essential norm
$\|T\|_e$ of a bounded operator $T:\mathscr{H}_1 \to \mathscr{H}_2 $
is defined as the distance from $T$ to the space of compact
operators  from  $\mathscr{H}_1$ and $\mathscr{H}_2.$  We refer  readers to \cite{ZZHH, ZZH,TM0, Shapiro,
 SS, UEKI, UEKI2, DV} for estimations of such norms  for  different operators on
Hardy space, Bergman space, $L^p$ and some Fock spaces. We get the
following estimate for $uC_\psi$ when it acts on weighted Fock--Sobolev spaces.
\begin{thm}\label{essentialnorm}
Let $1 <p\leq q \leq \infty$  and $p\neq \infty$. If
$uC_\psi:\mathcal{F}_\alpha^p \to \mathcal{F}_\alpha^q$ is bounded,  then
\begin{equation}
\label{essential} \|uC_\psi\|_e\simeq \begin{cases}
\bigg(\limsup_{|w| \to
\infty} B_{(\psi,m)}(|u|^q)(w) \bigg)^{\frac{1}{q}}, & q<\infty\\
 \limsup_{|\psi(w)| \to
\infty} B^\infty_{(\psi,m)}(|u|)(w), & q= \infty.
\end{cases}
\end{equation}
\end{thm}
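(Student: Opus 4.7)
The plan is to split the argument into matching lower and upper bounds, handling the cases $q<\infty$ and $q=\infty$ in parallel. The technical backbone is to identify $uC_\psi$ with the embedding induced by the pushforward measure $\mu_\psi(E) = \int_{\psi^{-1}(E)}|u(z)|^q|z|^{qm} e^{-q\alpha|z|^2/2}\,dV(z)$, so that $\|uC_\psi f\|_{(q,m)}^q \simeq \int |f(\zeta)|^q\,e^{-q\alpha|\zeta|^2/2}\,d\mu_\psi(\zeta)$. Under this identification, a short computation with $|k_w(\zeta)|^q e^{-q\alpha|\zeta|^2/2}=e^{-q\alpha|\zeta-w|^2/2}$ gives $\widetilde{\mu_\psi}_{(q,mq)}(w) \simeq B_{(m,\psi)}(|u|^q)(w)$, so Theorems~\ref{carlesonmeasure}--\ref{boundedinfinity} and their vanishing counterparts can be transferred directly into statements about $B_{(m,\psi)}(|u|^q)$.

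For the lower bound I would apply the normalized-kernel test-function method. Setting $g_w(z) = (1+|w|)^{-m}k_w(z)$, the Gaussian concentration of $|k_w(z)|^pe^{-p\alpha|z|^2/2}$ around $z=w$ together with \eqref{normequal} yields $\|g_w\|_{(p,m)} \simeq 1$, while $g_w \to 0$ uniformly on compact subsets of $\CC^n$ as $|w|\to \infty$. Since $1<p<\infty$, the space $\mathcal{F}_{(m,\alpha)}^p$ is reflexive, so $g_w$ tends weakly to zero, and $\|Kg_w\|_{(q,m)}\to 0$ for any compact $K:\mathcal{F}_{(m,\alpha)}^p \to \mathcal{F}_{(m,\alpha)}^q$. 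Hence
\begin{equation*}
\|uC_\psi - K\| \gtrsim \limsup_{|w|\to\infty}\|uC_\psi g_w\|_{(q,m)}.
\end{equation*}
When $q<\infty$, inserting $g_w$ in the integral norm and again using the localization $\psi(z) \approx w$ to replace $(1+|w|)^{-mq}$ by $(1+|\psi(z)|)^{-mq}$ gives $\|uC_\psi g_w\|_{(q,m)}^q \gtrsim B_{(m,\psi)}(|u|^q)(w)$. When $q=\infty$, a pointwise estimate of $|u(z)g_w(\psi(z))||z|^me^{-\alpha|z|^2/2}$ at any $z$ with $\psi(z)=w$ yields $\|uC_\psi g_w\|_{(\infty,m)} \gtrsim B^\infty_{(m,\psi)}(|u|)(z)$, so letting $|\psi(z)|\to\infty$ (equivalently $|w|\to\infty$) produces the corresponding limsup. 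Taking the infimum over compact $K$ gives the $\gtrsim$ direction in both regimes.

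For the upper bound I would approximate $uC_\psi$ by the compact operators $uC_\psi K_r$, where $K_rf(z)=f(rz)$ is the dilation by $0<r<1$; on Fock-type spaces $K_r$ maps bounded sets into uniformly normally convergent families of entire functions, hence is compact. Then $\|uC_\psi\|_e \leq \|uC_\psi(\mathrm{Id}-K_r)\|$, and the remainder can be estimated by bounding its action on the embedding above: $(uC_\psi - uC_\psi K_r)f$ corresponds to the pushforward measure of $|u(z)|^q|z|^{qm}e^{-q\alpha|z|^2/2}dV(z)$ restricted to the ``high-frequency'' part of $f$, which vanishes on $|w|$ bounded and is controlled by the tail of $B_{(m,\psi)}(|u|^q)$. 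Passing $r\to 1^-$ and invoking the Berezin-transform characterizations of (vanishing) $(p,q)$ Fock--Carleson measures in Theorems~\ref{carlesonmeasure}--\ref{boundedinfinity} then gives $\|uC_\psi\|_e^q \lesssim \limsup_{|w|\to\infty} B_{(m,\psi)}(|u|^q)(w)$ for $q<\infty$, and analogously for $q=\infty$ using Theorem~\ref{bounded2} with the limit taken over $|\psi(w)|\to\infty$.

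The main obstacle will be the upper bound. The dilation route must be justified with care in $\mathcal{F}_{(m,\alpha)}^p$: one must verify that $K_r$ is genuinely compact (not merely bounded) and, more delicately, that $\|uC_\psi - uC_\psi K_r\|$ can be cleanly bounded by the restricted Carleson constant of the tail of $\mu_\psi$, so that only the limsup of $B_{(m,\psi)}(|u|^q)$ at infinity survives after passing $r\to 1^-$. The polynomial weight $(1+|z|)^m$ from \eqref{normequal} interacts nontrivially with both the dilation and the pushforward, and keeping track of its contribution in the quantitative estimate is where the argument is most fragile. A viable alternative is to split the Bergman-type reproducing-kernel formula for $f\in\mathcal{F}_{(m,\alpha)}^p$ at radius $R$, giving a compact operator from the near-diagonal piece and a remainder whose norm reduces, through Theorems~\ref{carlesonmeasure}--\ref{boundedinfinity}, to the same limsup of $B_{(m,\psi)}(|u|^q)$ on $|w|>R$.
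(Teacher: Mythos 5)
Your lower bound for $q<\infty$ has a genuine gap. With the normalized test functions $g_w=(1+|w|)^{-m}k_w$ you get $\|uC_\psi g_w\|_{(q,m)}^q\simeq(1+|w|)^{-mq}\int_{\CC^n}|z|^{mq}|u(z)|^q|k_w(\psi(z))|^qe^{-\frac{q\alpha}{2}|z|^2}dV(z)$, whereas $B_{(m,\psi)}(|u|^q)(w)$ carries the weight $(1+|\psi(z)|)^{-mq}$ \emph{inside} the integral; your one-line justification ``localization $\psi(z)\approx w$'' is not valid, because nothing forces $\psi(z)$ to be near $w$ on the region carrying the mass of either integral. Concretely, take $u\equiv1$ and $\psi\equiv0$ (a rank-one, hence bounded, operator): then $\|uC_\psi g_w\|_{(q,m)}^q\simeq(1+|w|)^{-mq}e^{-\frac{q\alpha}{2}|w|^2}$ while $B_{(m,\psi)}(|u|^q)(w)\simeq e^{-\frac{q\alpha}{2}|w|^2}$, so your claimed pointwise inequality fails by the factor $(1+|w|)^{mq}$. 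The estimate you actually need (only in the $\limsup$ sense) can be repaired, but it requires an argument you do not give: split the integral defining $B_{(m,\psi)}(|u|^q)(w)$ at $\{|\psi(z)-w|\le R\}$, use $(1+|\psi(z)|)\simeq_R(1+|w|)$ there, and show the complementary piece is $\lesssim e^{-\frac{q\alpha}{4}R^2}$ uniformly in $w$ by writing $e^{-\frac{q\alpha}{2}|\psi(z)-w|^2}\le e^{-\frac{q\alpha}{4}R^2}e^{-\frac{q\alpha}{4}|\psi(z)-w|^2}$ and invoking the boundedness of the Berezin-type transform with the smaller parameter (the ``some or any $t$'' part of Theorem~\ref{carlesonmeasure}). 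The paper avoids this issue altogether in the $q<\infty$ case by testing against $k_w$ itself, for which $\|uC_\psi k_w\|_{(q,m)}^q\ge B_{(m,\psi)}(|u|^q)(w)$ holds trivially since $(1+|\psi(z)|)^{mq}\ge1$. Your $q=\infty$ lower bound (setting $w=\psi(z)$) does coincide with the paper's.

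Your upper bound takes a genuinely different route from the paper, but it is only a plan at the decisive step. The paper works with the Taylor-tail projections $R_jf=\sum_{k\ge j}p_k$, uses $\sup_j\|R_j\|<\infty$ (this is where $1<p<\infty$ enters), the inequality $\|uC_\psi\|_e\le\liminf_j\|uC_\psi R_j\|$, and then splits $\|uC_\psi R_jf\|_{(q,m)}^q$ over $D(0,\delta)$ and its complement, controlling the near part by an explicit quantity $I_j\to0$ coming from Lemma~3 of \cite{UEKI2} and Stirling's formula, and the far part by $\sup_{|z|\ge\delta}B_{(m,\psi)}(|u|^q)(z)$ via Theorem~\ref{bounded}. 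Your dilation operators $K_rf(z)=f(rz)$ could in principle replace $R_j$, but you would still have to prove: compactness of $K_r$ on $\mathcal{F}_{(m,\alpha)}^p$, a uniform bound $\sup_{1/2<r<1}\|K_r\|<\infty$, and above all the quantitative estimate $\|uC_\psi(\mathrm{Id}-K_r)\|^q\lesssim\sup_{|w|>R}B_{(m,\psi)}(|u|^q)(w)+o(1)$ as $r\to1^-$, e.g.\ by splitting the embedding integral at $|w|=R$ and showing $(\mathrm{Id}-K_r)f\to0$ uniformly on compacta, uniformly over the unit ball. You correctly identify this as the main obstacle but do not carry it out, so as written the upper bound (and with it the whole proof) is incomplete.
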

For $p>1,$ the compactness conditions in Theorem~\ref{bounded} and Theorem~\ref{bounded2} could
be easily drawn from this relation  since the left-hand side
expression in  \eqref{essential} in this case  vanishes for compact $uC_\psi.$

All the  results obtained on $uC_\psi$  again unify  and  generalize a number of recent results in the area including from \cite{CMS,CCK, TM0, TM, SS, UEKI, UEKI2}. One may simply set $m=0$  and simplify the conditions  to get  the classical  known results on the ordinary Fock spaces.

 We also  mention that we have not explicitly used the kernel function $K_{(w,m)}$ for the space $\mathcal{F}_{(m,\alpha)}^2, \ \ m\neq 0$  in  dealing with any of the results  presented here. Finding an explicit expression for $K_{(w,m)}$ is still an open problem.  By  Corollary~13 of \cite{RCKZ} we have in moduli that
 \begin{align*}
 |K_{(w,m)}(z)| \lesssim \frac{\|K_z\|_{2}\|K_w\|_{2}}{ e^{\frac{\alpha }{8}|z-w|^2} (1+|z||w|)^m}.
 \end{align*} It remains open whether  the  reverse estimate above holds. On the other hand, it was proved in \cite{CCK} that
 \begin{align*}
 \|K_{(w,m)}\|^2_{(m,2)} \simeq \frac{\|K_w\|_{2}^2}{(1+|w|)^{2m}}.
 \end{align*}
\section{Some auxiliary lemmas}
In this section we prove some lemmas which play  key roles in our next considerations. The lemmas are also  interest of their own.
 For  a given
measurable function $f$  and a Borel measure $\mu_f$ on $\CC^n$ such that  $d\mu_f (z)=f(z)dV(z)$, we prove the following.
  \begin{lem}\label{gen} let $1\leq p\leq \infty$  and $0<r, t, s <\infty$. Then  the maps
  $f \mapsto f_{(r,s,D)}$ and  $f \mapsto \widetilde{f}_{(t,s)}$ are bounded on $L^p$ where $f_{(r,s,D)}(z):= (1+|z|)^{-s} \mu_f(D(z,r))$ and \begin{align*}\widetilde{f}_{(t,s)}(z):= \int_{\CC^n} (1+|w|)^{-s}  e^{-\frac{\alpha t}{2}|w-z|^2} d\mu_f(w).\end{align*}
    \end{lem}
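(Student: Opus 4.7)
Both transformations are, up to an auxiliary polynomial weight, convolution operators, so the natural approach is to recognize each as a convolution and apply Young's inequality (or equivalently, verify the $L^1$ and $L^\infty$ endpoints by Fubini and pass to intermediate $p$ by Riesz--Thorin). The only decision to make is where to place the weight $(1+|\cdot|)^{-s}$ so that the residual kernel is unweighted and lies in $L^1(\CC^n)$.

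For the Berezin-type transform $\widetilde{f}_{(t,s)}$ I would absorb the weight into $f$: set
\begin{align*}
g(w) = (1+|w|)^{-s} f(w), \qquad \phi(u) = e^{-\frac{\alpha t}{2}|u|^{2}}.
\end{align*}
A direct change of variables in the defining integral gives the clean convolution identity $\widetilde{f}_{(t,s)}(z) = (g * \phi)(z)$. Since $(1+|w|)^{-s} \leq 1$, the pointwise bound $|g|\leq |f|$ yields $\|g\|_{L^p}\leq \|f\|_{L^p}$, and $\phi \in L^1(\CC^n)$ with $\|\phi\|_{L^1}$ a finite constant depending only on $n$, $t$, $\alpha$. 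Young's convolution inequality then gives $\|\widetilde{f}_{(t,s)}\|_{L^p} \lesssim \|f\|_{L^p}$ for every $1 \leq p \leq \infty$.

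For the averaging transform I would instead keep the weight on the outside: one has
\begin{align*}
f_{(r,s,D)}(z) = (1+|z|)^{-s}\,\bigl(f * \chi_{D(0,r)}\bigr)(z),
\end{align*}
since $\chi_{D(0,r)}(z-w) = \chi_{D(z,r)}(w)$. Bounding $(1+|z|)^{-s}\leq 1$ pointwise and applying Young's inequality with $\chi_{D(0,r)}\in L^1(\CC^n)$ gives $\|f_{(r,s,D)}\|_{L^p}\leq V(D(0,r))\,\|f\|_{L^p}$ for all $1 \leq p \leq \infty$.

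There is really no major obstacle; the mild point is just the correct placement of the polynomial weight. If one wished to avoid Young's inequality entirely, the endpoint $p=\infty$ is immediate from the trivial bound on the kernel integrals, the endpoint $p=1$ follows from Fubini together with the elementary observation that $\int_{D(w,r)}(1+|z|)^{-s}\,dV(z)\lesssim (1+|w|)^{-s}$ (consequence of the reverse triangle inequality, splitting $|w|\leq 2r$ and $|w|>2r$), and the intermediate values of $p$ come from Riesz--Thorin interpolation.
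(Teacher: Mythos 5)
Your proof is correct. Both convolution identities check out: $\widetilde{f}_{(t,s)} = \bigl((1+|\cdot|)^{-s}f\bigr)*\phi$ with the Gaussian $\phi\in L^1(\CC^n)$, and $\mu_f(D(z,r)) = (f*\chi_{D(0,r)})(z)$, so Young's inequality immediately gives the $L^p$ bounds for all $1\leq p\leq\infty$ once the weight $(1+|\cdot|)^{-s}\leq 1$ is discarded pointwise. The paper argues slightly differently: it does not package the operators as convolutions but verifies the two endpoints directly --- $p=1$ by Fubini (using $\int_{\CC^n} e^{-\frac{\alpha t}{2}|z-w|^2}dV(z)\simeq 1$ and $\chi_{D(\zeta,r)}(z)=\chi_{D(z,r)}(\zeta)$, exactly the kernel facts underlying your Young argument) and $p=\infty$ by the trivial sup bound --- and then invokes interpolation of $L^p$ spaces, following the $s=0$ case from Hu--Lv and noting that $(1+|w|)^{-s}\leq 1$ lets the same argument go through for $s>0$. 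So your primary route is a cleaner one-shot repackaging (Young subsumes the endpoint-plus-Riesz--Thorin scheme and yields explicit constants $\|\phi\|_{L^1}$ and $\mathrm{Vol}(D(0,r))$), while the paper's route is more hands-on and avoids citing Young; your closing remark about doing the endpoints by Fubini and the bound $\int_{D(w,r)}(1+|z|)^{-s}dV(z)\lesssim (1+|w|)^{-s}$ and then interpolating is essentially the paper's proof (in fact that refinement is not even needed, since bounding the weight by $1$ already suffices at $p=1$).
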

 \begin{proof} We mention that for the  case when $s=0$, the lemma was first proved in \cite{ZHXL}. Using the additional fact that
\begin{align}
\label{additional}
\sup_{w\in \CC^n} (1+|w|)^{-s}\leq 1,
\end{align} for all nonnegative $s$ and $t$,
the arguments there can be easily adopted to work for  all positive $s$. We  outline the proof as follows.  We use  interpolation argument on  $L^p$  Lebesgue spaces. Thus, it suffices to establish the statements for $p=1$ and $p=\infty.$ We begin with the case $p=1$.  Using \eqref{additional} and Fubini's theorem, we have
  \begin{align*}
  \|\widetilde{f}_{(t,s)}\|_{L^1}&= \int_{\CC^n} \bigg|\int_{\CC^n}  (1+|w|)^{-s}  e^{-\frac{\alpha t}{2} |w-z|^2}d\mu_f(w) \bigg| dV(z)\nonumber\\
  &=\int_{\CC^n} \bigg(\int_{\CC^n} e^{-\frac{\alpha t}{2} |w-z|^2} dV(z) \bigg)\frac{ |f(w)|}{(1+|w|)^{s}}dV(w)\nonumber\\
   &\simeq \int_{\CC^n}  \frac{ |f(w)|}{(1+|w|)^{s}}dV(w)\lesssim  \|f\|_{L^1}.
  \end{align*} Applying again \eqref{additional} for $t=1$, Fubini's theorem, and the fact that
  $\chi_{D(\zeta,r)}(z)= \chi_{D(z,r)}(\zeta)$ for all $\zeta$ and $z$ in $\CC^n$, we have
  \begin{align*}
  \|f_{(r,s,D)}\|_{L^1}=\int_{\CC^n} (1+|z|)^{-s} \mu_f(D(z,r))dV(z) \leq  \int_{\CC^n} \int_{D(z,r)} |f(\zeta)|dV(\zeta) dV(z)\nonumber\\
  =\int_{\CC^n} |f(\zeta)| \int_{\CC^n} \chi_{D(\zeta,r)}(z) dV(z)dV(\zeta)\simeq \|f\|_{L^1}.
  \end{align*}
   On the other hand, for  $p= \infty$  it easily follows that
  \begin{align*}
  \|f_{(r,s,D)}\|_{L^\infty}= \sup_{z\in \CC^n}\big|(1+|z|)^{-s} \mu_f(D(z,r)) \big| \leq  \sup_{z\in \CC^n}  \int_{D(z,r)} |f(\zeta)|dV(\zeta)\\
  \leq \|f\|_{L^\infty}\sup_{z\in \CC^n}\int_{D(z,r)}dV(\zeta) \lesssim \|f\|_{L^\infty}.
  \end{align*}
  Seemingly,  for  each   $f\in {L^\infty}$,  we also   have
  \begin{align}
  \label{del}
  \|\widetilde{f}_{(t,s)}\|_{L^\infty}&= \sup_{z\in \CC^n}  \int_{\CC^n} e^{-\frac{t\alpha}{2}|w-z|^2}(1+|w|)^{-s} |f(w)|dV(\zeta)\nonumber\\
  &\leq \sup_{z\in \CC^n}  \int_{\CC^n} e^{-\frac{t\alpha}{2}|z-w|^2} |f(\zeta)|dV(w)\nonumber\\
   &\leq \|f\|_{L^\infty} \sup_{z\in \CC^n}  \int_{\CC^n} e^{-\frac{t\alpha}{2}|z-w|^2}dV(w) \lesssim \|f\|_{L^\infty},
     \end{align} and completes the proof.
  \end{proof}
\begin{lem} \label{oneforall}
Let $1\leq p \leq \infty, \ \ 0<\ s<\infty$ and $\mu\geq 0$ be a measure on $\CC^n$. Then  if $\mu_{(s,\delta,D)}$ belongs to $L^p$ for some $\delta>0,$ then  so does $\mu_{(s,r,D)}$  for all $r>0.$
\end{lem}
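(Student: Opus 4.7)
The plan is to reduce the lemma to Young's convolution inequality by showing that $\mu_{(s,r,D)}$ is pointwise dominated by the convolution of $\mu_{(s,\delta,D)}$ with the characteristic function of a slightly larger Euclidean ball. The case $r \leq \delta$ is immediate, since $D(z,r) \subseteq D(z,\delta)$ yields $\mu_{(s,r,D)}(z) \leq \mu_{(s,\delta,D)}(z)$ pointwise, so $L^p$ membership transfers trivially.

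For the remaining range my first step would be to establish the pointwise-to-integral bound
\begin{equation*}
\mu(D(z,r)) \lesssim \int_{D(z,\,r+\delta)} \mu(D(\zeta,\delta)) \, dV(\zeta).
\end{equation*}
To obtain it I would insert the tautology $Vol(D(w,\delta)) = \int_{\CC^n} \chi_{D(\zeta,\delta)}(w) \, dV(\zeta)$, which is a positive constant independent of $w$, into the identity $\mu(D(z,r)) = \int \chi_{D(z,r)}(w) \, d\mu(w)$ and then apply Fubini. The result is $\mu(D(z,r)) \simeq \int \mu(D(z,r) \cap D(\zeta,\delta)) \, dV(\zeta)$; the integrand vanishes unless $|\zeta - z| < r + \delta$ and is trivially bounded by $\mu(D(\zeta,\delta))$.

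To convert this into a weighted estimate I would use that the triangle inequality forces $1 + |\zeta| \leq (1 + r + \delta)(1 + |z|)$ whenever $\zeta \in D(z, r+\delta)$, so $(1+|z|)^{-s} \leq (1+r+\delta)^s (1+|\zeta|)^{-s}$. Dividing the previous bound by $(1+|z|)^s$ therefore produces
\begin{equation*}
\mu_{(s,r,D)}(z) \lesssim \int_{D(z,r+\delta)} \mu_{(s,\delta,D)}(\zeta) \, dV(\zeta) = \bigl( \mu_{(s,\delta,D)} * \chi_{D(0,r+\delta)} \bigr)(z),
\end{equation*}
and Young's convolution inequality gives
\begin{equation*}
\|\mu_{(s,r,D)}\|_{L^p} \lesssim Vol(D(0,r+\delta)) \, \|\mu_{(s,\delta,D)}\|_{L^p} < \infty
\end{equation*}
uniformly in $1 \leq p \leq \infty$.

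The only potential obstacle is the bookkeeping for the weight $(1+|z|)^{-s}$, but the uniform comparability $1+|\zeta| \simeq 1+|z|$ on $D(z,r+\delta)$ disposes of it cleanly, with the implicit constant depending only on $r,\delta,s$. No analytic subtlety arises, and the argument is insensitive to whether $p$ is finite or infinite because Young's inequality covers the full range $p\in[1,\infty]$.
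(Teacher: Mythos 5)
Your proposal is correct, and it proves the lemma by the same underlying mechanism as the paper---a pointwise domination of $\mu_{(s,r,D)}$ by a ball average of $\mu_{(s,\delta,D)}$ followed by an $L^p$ bound for averaging---but the two halves are executed differently. For the pointwise step, you go ``upward'': inserting $Vol(D(w,\delta))=\int_{\CC^n}\chi_{D(\zeta,\delta)}(w)\,dV(\zeta)$ and applying Tonelli, you bound $\mu(D(z,r))$ from above by $\int_{D(z,r+\delta)}\mu(D(\zeta,\delta))\,dV(\zeta)$ over the enlarged ball, with the weight handled by $1+|\zeta|\simeq 1+|z|$ on $D(z,r+\delta)$. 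The paper instead goes ``downward'': it starts from $\int_{D(\tau,r)}(1+|z|)^{-s}\mu(D(z,\delta))\,dV(z)$, uses $\chi_{D(z,\delta)}(\zeta)=\chi_{D(\zeta,\delta)}(z)$ and Fubini to rewrite it as $\int_{\CC^n}(1+|\tau|)^{-s}Vol\big(D(\zeta,\delta)\cap D(\tau,r)\big)\,d\mu(\zeta)$ up to constants, and then lower-bounds the intersection volume uniformly for $\zeta\in D(\tau,r)$ to extract $(1+|\tau|)^{-s}\mu(D(\tau,r))$; this yields the inequality \eqref{ess3} with the average taken over $D(\tau,r)$ itself rather than an enlarged ball. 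For the integration step, you invoke Young's convolution inequality with $\chi_{D(0,r+\delta)}$, valid uniformly for $1\leq p\leq\infty$, whereas the paper cites its Lemma~\ref{gen} (the $L^p$-boundedness of $f\mapsto f_{(r,s,D)}$, proved there by interpolation between $L^1$ and $L^\infty$), which for nonnegative $f$ carries the same content as your convolution bound. What your route buys is self-containment and an explicit constant $Vol(D(0,r+\delta))$, at no loss of generality; what the paper's route buys is reuse of Lemma~\ref{gen}, which it needs elsewhere anyway (e.g.\ in Lemma~\ref{lemma1}).
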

\begin{proof}
For each $\tau$ in $\CC^n$ we may write
\begin{align*}
\int_{D(\tau, r)} (1+|z|)^{-s} \mu(D(z,\delta)) dV(z)= \int_{D(\tau,r)} \int_{\CC^n}   (1+|z|)^{-s} \chi_{D(z,\delta)}(\zeta) d\mu(\zeta) dV(z).
\end{align*} Using again the simple fact that  $\chi_{D(z,\delta)}(\zeta)= \chi_{D(\zeta,\delta)}(z)$, the double integral
above is easily seen to be equal to
\begin{align*}
\int_{\CC^n} \int_{D(\zeta,\delta) \cap D(\tau, r)} \frac{dV(z) d\mu(\zeta)}{(1+|z|)^{s}} \simeq \int_{\CC^n}\frac{Vol\big(D(\zeta,\delta) \cap D(\tau, r)\big)}{(1+|\tau|)^{s}} d\mu(\zeta)\\
\geq  \frac{1}{(1+|\tau|)^{s}} \int_{D(\tau, r)}Vol\big(D(\zeta,\delta) \cap D(\tau, r)\big) d\mu(\zeta)
\end{align*} where $Vol(E)$ refers to the Lebesque  measure of set $E \subset \CC^n.$   Clearly,
the right hand quantity is bounded from below by
\begin{equation*}
 (1+|\tau|)^{-s} \mu(D(\tau, r)) \inf_{\zeta\in D(\tau, r)} Vol\big(D(\zeta,\delta) \cap D(\tau, r)\big) \gtrsim (1+|\tau|)^{-s} \mu(D(\tau, r))
\end{equation*} where the lower estimate follows here since  $\zeta\in D(\tau, r)$, there obviously  exists a ball $D(\tau_0, r_0)$ contained in $D(\zeta,\delta) \cap D(\tau, r)$  with $Vol\big( D(\tau_0, r_0)\big) \simeq  r_0^n.$ From the above analysis, we conclude
\begin{equation}
\label{ess3}
(1+|\tau|)^{-s} \mu(D(\tau, r)) \lesssim \int_{D(\tau, r)} (1+|z|)^{-s} \mu(D(z,\delta)) dV(z).
\end{equation} If we now  set $f(z)= (1+|z|)^{-s}\mu(D(z,\delta)),  $ then the estimate above  along with Lemma~\ref{gen} ensure
that \begin{equation}\label{forbig}
\|\mu_{(s,r,D)}\|_{L^p} \lesssim \|f_{(s,\delta)}\|_{L^p} \lesssim\|f\|_{L^p}=\|\mu_{(s, \delta, D)}\|_{L^p} <\infty
\end{equation} for each $p\geq1$ and any $r>0.$
\end{proof}
Our next lemma gives the link among averaging sequence, averaging functions
and Berezin type integral transform of a given measure.
 \begin{lem} \label{lemma1}
 Let $1\leq p\leq \infty$ and $0<s<\infty$. Then
 \begin{equation} \label{keylem}
\| \mu_{(s,r, D)}\|_{L^p}\simeq \|\widetilde{\mu}_{(t,s)}\|_{L^p}\simeq  \| \mu_{(s, r, D)}(z_k)\|_{\ell^p}
 \end{equation} for some (or any) $r>0$ and $t>0$.
 \end{lem}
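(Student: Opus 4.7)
My plan is to establish the cyclic chain
\begin{equation*}
\|\mu_{(s,r,D)}\|_{L^p} \lesssim \|\widetilde{\mu}_{(t,s)}\|_{L^p} \lesssim \|\mu_{(s,r,D)}(z_k)\|_{\ell^p} \lesssim \|\mu_{(s,r,D)}\|_{L^p},
\end{equation*}
from which the three-way equivalence follows. The independence from the choice of radius $r$ will be absorbed into the constants by appealing to Lemma~\ref{oneforall} and its discrete analogue (proved by the same covering trick), while independence from $t$ follows from direct pointwise comparison of Gaussians with different decay rates.

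For the first link I restrict the defining integral of $\widetilde{\mu}_{(t,s)}(z)$ to $D(z,r)$: since $e^{-\frac{t\alpha}{2}|w-z|^2} \geq e^{-\frac{t\alpha r^2}{2}}$ and $(1+|w|)\simeq(1+|z|)$ there, the pointwise bound $\mu_{(s,r,D)}(z) \lesssim \widetilde{\mu}_{(t,s)}(z)$ follows. For the second link, the lattice covering $\CC^n = \bigcup_k D(z_k, r)$ combined with the Gaussian perturbation estimate $e^{-\frac{t\alpha}{2}|w-z|^2} \lesssim e^{-c|z-z_k|^2}$ for $w\in D(z_k,r)$ yields
\begin{equation*}
\widetilde{\mu}_{(t,s)}(z) \lesssim \sum_k \mu_{(s,r,D)}(z_k)\, G_c(z-z_k), \qquad G_c(\zeta):= e^{-c|\zeta|^2}.
\end{equation*}
The uniform bound $\sum_k G_c(z-z_k)\lesssim 1$, a consequence of Lemma~\ref{covering} and the integrability of $G_c$, settles $p=\infty$. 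For $1\leq p<\infty$ I apply Hölder's inequality in the form
\begin{equation*}
\Bigl(\sum_k a_k G_c(z-z_k)\Bigr)^p \leq \Bigl(\sum_k a_k^p G_c(z-z_k)\Bigr)\Bigl(\sum_k G_c(z-z_k)\Bigr)^{p-1},
\end{equation*}
integrate in $z$, and invoke Fubini with the uniform lattice bound to conclude $\|\widetilde{\mu}_{(t,s)}\|_{L^p}^p \lesssim \|\mu_{(s,r,D)}(z_k)\|_{\ell^p}^p$. For the third link, the disjointness of the balls $D(z_k, r/2)$ and the inclusion $D(z_k, r/2)\subset D(z,r)$ for $z\in D(z_k, r/2)$ give $\mu(D(z,r))\geq\mu(D(z_k, r/2))$, and together with $(1+|z|)\simeq(1+|z_k|)$ on each such ball this yields
\begin{equation*}
\|\mu_{(s,r,D)}\|_{L^p}^p \geq \sum_k \int_{D(z_k, r/2)} |\mu_{(s,r,D)}(z)|^p\, dV(z) \gtrsim \sum_k \frac{\mu(D(z_k, r/2))^p}{(1+|z_k|)^{sp}}.
\end{equation*}
A final appeal to Lemma~\ref{oneforall} bridges the radii $r/2$ and $r$ and closes the loop.

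The principal technical subtlety is the Gaussian perturbation bound: uniformly for $w\in D(z_k, r)$ one must obtain $e^{-\frac{t\alpha}{2}|w-z|^2}\lesssim e^{-c|z-z_k|^2}$, which is handled by splitting into the regimes $|z-z_k|\leq 2r$ (trivial, with constant $e^{t\alpha r^2/2}$) and $|z-z_k|>2r$ (where $(|z-z_k|-r)^2\geq |z-z_k|^2/4$ follows from the triangle inequality and absorbs the cross-term into a weakened Gaussian with constant $c = t\alpha/8$). Once this estimate and the uniform lattice sum $\sum_k G_c(z-z_k)\lesssim 1$ are in place, the remainder of the argument is a routine combination of Hölder, Fubini, and covering manipulations analogous to the proof of Lemma~\ref{oneforall}.
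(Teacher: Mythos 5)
Your first two links are sound. The pointwise bound $\mu_{(s,r,D)}(z)\lesssim\widetilde{\mu}_{(t,s)}(z)$, obtained by restricting the defining integral to $D(z,r)$, is exactly the paper's first step. Your second link, however, is a genuinely different route: the paper bounds $\widetilde{\mu}_{(t,s)}$ by the transform of the averaging \emph{function} (via the sub-mean-value estimate \eqref{est} applied to $f=k_z$ with $q=t$) and then invokes the $L^p$-boundedness of $f\mapsto\widetilde f_{(t,s)}$ from Lemma~\ref{gen}, whereas you discretize the Berezin transform over the lattice and compare it directly with the averaging \emph{sequence}, using the Gaussian perturbation bound, the Schur-type estimate $\sup_z\sum_k e^{-c|z-z_k|^2}\lesssim1$, and H\"older. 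That part is correct, bypasses Lemma~\ref{gen} for this inequality, and is a legitimate alternative; the cyclic structure also spares you one of the two covering arguments the paper carries out. (A minor aside: the $t$-independence comes for free because your first two links hold for each fixed $t$, not from any two-sided pointwise comparison of Gaussians with different decay rates.)

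The gap is in the third link. Integrating over the disjoint balls $D(z_k,r/2)$ only gives $\|\mu_{(s,r/2,D)}(z_k)\|_{\ell^p}\lesssim\|\mu_{(s,r,D)}\|_{L^p}$, i.e.\ the sequence at radius $r/2$, while your second link produced the sequence at radius $r$; so the chain does not close as written. The ``discrete analogue of Lemma~\ref{oneforall}'' you invoke to pass from radius $r$ down to $r/2$ at the sequence level is false: the lattice only guarantees that the balls $D(z_k,r)$ cover, not the balls $D(z_k,r/2)$, and since pairwise disjoint open balls can never cover $\CC^n$, there is a point $w_0\notin\bigcup_k D(z_k,r/2)$; a unit point mass at $w_0$ has $\mu_{(s,r/2,D)}(z_k)=0$ for every $k$ while $\mu_{(s,r,D)}(z_k)>0$ for some $k$, so no covering trick proves that bridge. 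The repair is the one the paper uses: keep the sequence at radius $r$ and enlarge the continuous radius instead. For $z\in D(z_k,r)$ one has $\mu(D(z_k,r))\leq\mu(D(z,2r))$ and $1+|z|\simeq1+|z_k|$, so summing over $k$ with the finite-overlap constant $N_{\max}$ of Lemma~\ref{covering} yields $\|\mu_{(s,r,D)}(z_k)\|_{\ell^p}\lesssim\|\mu_{(s,2r,D)}\|_{L^p}$, and then the genuine (continuous) Lemma~\ref{oneforall} gives $\|\mu_{(s,2r,D)}\|_{L^p}\lesssim\|\mu_{(s,r,D)}\|_{L^p}$. With that one-line change your cyclic chain closes and the rest of your argument stands.
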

\begin{proof} We begin by noting that  since $\widetilde{\mu}_{(t,s)}$ is independent of  $r,$ if the estimate in \eqref{keylem} holds for
some $r>0,$ then it  holds  for every other positive $r.$  The same holds  with $t$ as  $\mu_{(s, r, D)}$  is independent of  it.
The proof of the lemma follows from a careful modification of  some arguments used  in the proof of Theorem~13 in \cite{JIKZ}. We may first
  observe that for each $w$ in the ball $D(z,r)$, the estimate
  \begin{equation}\label{estimate}
1+|z| \simeq 1+|w|
\end{equation} holds. We proceed to  show the first estimate in \eqref{keylem}. Using \eqref{estimate} we have
 \begin{align*}
 \label{firstt}
 \frac{\mu(D(z,r))}{(1+|z|)^s}= \frac{1}{(1+|z|)^s}\int_{D(z,r)} d\mu(w)\leq\frac{e^{\frac{\alpha t r^2}{2}}}{(1+|z|)^s}\int_{D(z,r)}e^{-\frac{\alpha t }{2} |z-w|^2} d\mu(w)\\
 \lesssim e^{\frac{\alpha t r^2}{2}}\int_{D(z,r)}\frac{e^{-\frac{\alpha t }{2} |z-w|^2}}{(1+|w|)^s} d\mu(w)\lesssim \widetilde{\mu}_{(t,s))}(z)
  \end{align*}  from which we get
  \begin{equation}
  \label{fourth}
   \| \mu_{(s,r,D)}\|_{L^p}\lesssim \|\widetilde{\mu}_{(t,s)}\|_{L^p}
   \end{equation}
   for each $p\geq1.$
On the other hand, by  Lemma~1 of \cite{JIKZ}, we note that the pointwise estimate
\begin{equation}
\label{est}
|f(z)e^{-\frac{\alpha}{2}|z|^2}|^q \lesssim \int_{D(z,r)} |f(w)|^qe^{-\frac{\alpha q}{2}|w|^2} dV(w)
\end{equation} holds for any  $f$ in $ H(\CC^n),$  $q,r>0$ and  $z$ in $\CC^n$.
  From this and \eqref{estimate}, we deduce
\begin{align*}
\frac{|f(z)e^{-\frac{\alpha}{2}|z|^2}|^q }{(1+|z|)^{s}}\lesssim \frac{1}{(1+|z|)^{s}} \int_{D(z,r)} |f(w)e^{-\frac{\alpha}{2}|w|^2}|^q dV(w)\\
\simeq  \int_{D(z,r)} \frac{|f(w)e^{-\frac{\alpha}{2}|w|^2}|^q}{|(1+|w|)^{s}} dV(w)
\end{align*} Integrating the above against the measure $\mu$, we find
\begin{align*}
\int_{\CC^n}\frac{|f(z)e^{-\frac{\alpha}{2}|z|^2}|^q}{(1+|z|)^{s}} d\mu(z) &\lesssim \int_{\CC^n} \int_{D(z,r)} \frac{|f(w)e^{-\frac{\alpha}{2}|w|^2}|^q}{(1+|w|)^{s}} dV(w) d\mu(z)\nonumber\\
&=\int_{\CC^n}  \frac{|f(w)e^{-\frac{\alpha}{2}|w|^2}|^q}{(1+|w|)^{s}} \int_{\CC^n} \chi_{D(w,r)}(z)dV(w) d\mu(z).
\end{align*}
It follows from this estimate and Fubini's theorem  that
\begin{equation}
\label{pointwise}
\int_{\CC^n}\frac{|f(w)e^{-\frac{\alpha}{2}|w|^2}|^q}{(1+|w|)^{s}} d\mu(w)\lesssim \int_{\CC^n}  \big|f(w)e^{-\frac{\alpha}{2}|w|^2}\big|^q \frac{\mu(D(w,r))}{(1+|w|)^{s}}dV(w)
\end{equation} for all entire function $f$ in $\CC^n.$  upon setting $f=k_{z}$  and $q=t$ in it, we  see that the left-hand side integral becomes $\widetilde{\mu}_{(t,s)}$ and
\begin{align*}
\widetilde{\mu}_{(t,s)}(z)&=\int_{\CC^n}(1+|w|)^{-s}|k_{z}(w)|^te^{-\frac{\alpha t}{2}|w|^2} d\mu(w)\nonumber\\
&\lesssim \int_{\CC^n} (1+|w|)^{-s}|k_{z}(w)|^te^{-\frac{\alpha t}{2}|w|^2} \mu(D(w,r))dV(w)
=\widetilde{g}_{(t,s)}(z)
\end{align*} where we set $g(w)= \mu(D(w,r))$. This coupled with Lemma~\ref{gen} yield the
reverse  estimate in \eqref{fourth}. That is
\begin{align}
\|\widetilde{\mu}_{(t,s)}\|_{L^p}\lesssim \|\widetilde{g}_{(t,s)}\|_{L^p} \lesssim \|g\|_{L^p} = \|\mu_{(s,r,D)}\|_{L^p}
\end{align} for all $p.$
 Since the case for $p= \infty$ is trivial, the proof will be complete once we show that
the first and the last quantities in \eqref{keylem} are comparable for $1\leq p<\infty$. In doing so,
\begin{align*}
\int_{\CC^n}\Bigg( \frac{\mu(D(z, r))}{(1+|z|)^s}\Bigg)^p dV(z) &\leq \sum_{k=1}^\infty \int_{D(z_k,r)} \Bigg( \frac{\mu(D(z, r))}{(1+|z|)^s}\Bigg)^p dV(z)\nonumber\\
&\leq\sum_{k=1}^\infty \int_{D(z_k,r)} \Bigg( \frac{\mu(D(z_k, 2r))}{(1+|z_k|)^s}\Bigg)^p dV(z)\nonumber\\
&\lesssim\sum_{k=1}^\infty  \Bigg( \frac{\mu(D(z_k, 2r))}{(1+|z_k|)^s}\Bigg)^p.
\end{align*} Here the last inequality follows since $r$ is fixed and  $Vl(D(z_k,r))\simeq r^n $  independent of
$k$. From this and Lemma~\ref{oneforall} we obtained one side of the required estimate in \eqref{keylem}, namely that
\begin{align}
\label{reverse}
\|\mu_{(s,r, D)}\|_{L^p}\lesssim  \| \mu_{(s, r, D)}(z_k)\|_{\ell^p}.
\end{align}It remains to prove the reverse estimate in \eqref{reverse}. To this end, Observe that
\begin{align*}
N_{\max}\int_{\CC^n}\Bigg( \frac{\mu(D(z, 2r))}{1+|z|^s}\Bigg)^p dV(z)\geq\sum_{k=1}^\infty \int_{D(z_k,r)} \Bigg( \frac{\mu(D(z, 2r))}{(1+|z|)^s}\Bigg)^p dV(z).
\end{align*}
Now for each $z\in D(z_k, r),$ we deduce from triangle inequality that  $\mu(D(z,2r)) \geq \mu(D(z_k,r))$ and hence
\begin{align*}
 \sum_{k=1}^\infty \int_{D(z_k,r)} \Bigg( \frac{\mu(D(z, 2r))}{(1+|z|)^s}\Bigg)^p dV(z)&\geq
 \sum_{k=1}^\infty \int_{D(z_k,r)} \Bigg( \frac{\mu(D(z_k, r))}{(1+|z_k|)^s}\Bigg)^p dV(z)\nonumber\\
&\gtrsim  \sum_{k=1}^\infty  \Bigg( \frac{\mu(D(z_k, r))}{(1+|z_k|)^s}\Bigg)^p
\end{align*} from which and Lemma~\ref{oneforall} again the  required  estimate follows.
\end{proof}
We remark that the norm estimates in Lemma~\ref{lemma1} are also valid for $0<p<1$. Its proof requires a bite different
approach than the one outlined above. We decided not to develop it here since we do not need such fact in our
consideration.

\begin{lem}\label{uniformconv}
Let $0<q,p \leq\infty$ and $(g,\psi)$ be a pair of entire
functions. Then
 $uC_\psi: \mathcal{F}_{(\alpha,m)}^p \to
\mathcal{F}_{(\alpha,m)}^q $ is compact if and only if $\|
uC_\psi f_k\|_{(q,m)} \to 0$ as $k\to \infty$ for each bounded
sequence $(f_k)_{k\in \N}$ in  $\mathcal{F}_{(\alpha,m)}^p$ converging
to zero uniformly on compact subsets of $\CC^n$ as $k\to \infty.$
\end{lem}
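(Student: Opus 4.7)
\textbf{Proof proposal for Lemma~\ref{uniformconv}.}
The plan is the standard normal-families argument adapted to the Fock--Sobolev setting. The central ingredient is a pointwise bound of the form
$|f(z)|\lesssim (1+|z|)^{-m}e^{\alpha|z|^2/2}\,\|f\|_{(p,m)}$,
valid for all $f\in \mathcal{F}_{(\alpha,m)}^p$ and all $z\in\CC^n$; this follows from the submean value inequality \eqref{est} applied to the entire functions $z^\beta f$ with $\beta_{sn}=m$, together with Lemma~\ref{Fourier}. (For $p=\infty$ the estimate is immediate from \eqref{normeqqual2}.) In particular, any norm-bounded sequence in $\mathcal{F}_{(\alpha,m)}^p$ is uniformly bounded on every compact subset of $\CC^n$, and norm convergence in $\mathcal{F}_{(\alpha,m)}^q$ implies locally uniform convergence on $\CC^n$.

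For the necessity direction, assume $uC_\psi$ is compact and let $(f_k)$ be a norm-bounded sequence in $\mathcal{F}_{(\alpha,m)}^p$ with $f_k\to 0$ uniformly on compact subsets of $\CC^n$. Every subsequence of $(uC_\psi f_k)$ has a further subsequence that converges in $\mathcal{F}_{(\alpha,m)}^q$ to some $h$. By the pointwise bound above this convergence is locally uniform, while on the other hand $(uC_\psi f_k)(z)=u(z)f_k(\psi(z))\to 0$ pointwise because $\psi$ sends compacta to compacta and $f_k\to 0$ locally uniformly. Hence $h\equiv 0$, and a standard subsequence-of-subsequence argument forces $\|uC_\psi f_k\|_{(q,m)}\to 0$.

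For the sufficiency direction, suppose the sequential condition holds and let $(f_k)$ be any norm-bounded sequence in $\mathcal{F}_{(\alpha,m)}^p$. The pointwise bound together with Montel's theorem produces a subsequence $(f_{k_j})$ converging uniformly on compact subsets to some entire function $f$. Applying Fatou's lemma to the integral representation \eqref{normequal} (respectively \eqref{normeqqual2} in the case $p=\infty$) shows that $f\in \mathcal{F}_{(\alpha,m)}^p$. The shifted sequence $g_j:=f_{k_j}-f$ is then norm-bounded in $\mathcal{F}_{(\alpha,m)}^p$ and converges to $0$ uniformly on compact subsets, so the hypothesis gives $\|uC_\psi g_j\|_{(q,m)}\to 0$. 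Therefore $(uC_\psi f_{k_j})$ converges to $uC_\psi f$ in $\mathcal{F}_{(\alpha,m)}^q$, which yields compactness of $uC_\psi$.

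The only non-formal step is the pointwise control of $|f(z)|$ by the Fock--Sobolev norm, which is what allows locally uniform convergence on compacta and norm convergence to interact correctly. Once this estimate is in hand (via Lemma~\ref{Fourier} and \eqref{est}), the remainder of the argument is purely abstract and covers the ranges $0<p,q\leq\infty$ simultaneously, with only cosmetic changes for the endpoint cases.
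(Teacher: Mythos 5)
Your proof is correct and is essentially the standard normal-families argument that the paper itself invokes without writing out (it only points to \cite[Lemma~8]{SS}); the pointwise bound you derive is exactly the paper's estimate \eqref{appl}, and both directions are handled as in that standard scheme. The only step left tacit is that the sequential condition already forces $uC_\psi f\in\mathcal{F}_{(m,\alpha)}^q$ for every $f\in\mathcal{F}_{(m,\alpha)}^p$ (apply the hypothesis to the sequence $f/k$, which is bounded and tends to zero locally uniformly), which is needed before one can speak of $uC_\psi f_{k_j}\to uC_\psi f$ in the target space in your sufficiency argument.
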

The lemma can be proved following standard arguments; see also \cite[Lemma~8]{SS}.  The lemma will  be used repeatedly in  the proofs of
our compactness results.
 \section{Proof of the main results}
\emph{Proof of Theorem~\ref{carlesonmeasure}.}
 The equivalencies  of (ii), (iii), and (iv) come from Lemma~\ref{lemma1}. We now proceed to show that statements (iii) follows from  (i) and (i) follows from (iv).
 Assume that $\mu$ is a $(p,q)$ Fock--Carleson measure and consider a test function $K_{w}(z)=e^{\alpha \langle z, w\rangle}$ in $\mathcal{F}_{(m,p)}$. Note that this is  the kernel  function for $\mathcal{F}_{(0,\alpha)}^2$ at the point $w$.  Then
\begin{align} \label{first}
\int_{\CC^n}\frac{|K_{w}(z)|^q}{ e^{\frac{\alpha q}{2}|z|^2}} d\mu(z)\leq \|\mu\|^q \Bigg(\int_{\CC^n} \big|K_{w}(z) e^{-\frac{\alpha}{2}|z|^2}|z|^m\big|^p dV(z)\Bigg)^{\frac{q}{p}}\ \ \ \ \ \ \ \ \ \nonumber\\
\lesssim\|\mu\|^q \Bigg(\int_{\CC^n} \big|K_{w}(z) e^{-\frac{\alpha}{2}|z|^2}(1+|z|)^m\big|^p dV(z)\Bigg)^{\frac{q}{p}}. \ \ \ \ \
\end{align} By  Lemma~20 of \cite{RCKZ} or from a simple computation, the right-hand side integral  is estimated  as
\begin{align}
\label{second}
\|\mu\|^q \Bigg(\int_{\CC^n} \frac{|K_{w}(z)|^p}{(1+|z|)^{-pm}} e^{-\frac{\alpha p}{2}|z|^2} dV(z)\Bigg)^{\frac{q}{p}} \lesssim
\|\mu\|^q \bigg( \frac{e^{\frac{p\alpha}{2}|w|^2}}{(1+|w|)^{-mp}}\bigg)^{\frac{q}{p}}\nonumber\\
=\|\mu\|^q (1+|w|)^{mq} e^{\frac{q\alpha}{2}|w|^2}.\ \ \ \ \ \
\end{align} On the other hand, completing the square in the exponent on the left hand side of
\eqref{first}, we obtain
\begin{align}
\label{third}
\int_{\CC^n} \big| e^{\alpha\langle z,w\rangle-\frac{\alpha}{2}|z|^2}\big|^q d\mu(z)= e^{\frac{q\alpha}{2}|w|^2}\int_{\CC^n} e^{-\frac{q\alpha}{2}|z-w|^2} d\mu(z)\ \ \ \ \ \ \ \ \ \ \ \ \ \ \ \ \nonumber\\
\geq e^{\frac{q\alpha}{2}|w|^2}\int_{D(w,r)} e^{-\frac{q\alpha}{2}|z-w|^2} d\mu(z)\nonumber\\
\geq e^{\frac{q\alpha}{2}\big(|w|^2-r^2\big)} \mu(D(w,r))
\end{align} for all $w \in \CC^n$. Combining this with \eqref{second} leads to  (iii) and
\begin{equation}
\label{onee}
\|\mu\|^q \gtrsim \big\|\mu_{(mq, r, D)} \big\|_{L^\infty}.
\end{equation}
We next show statement  that (i) follows from (iv). The covering property of the sequence of balls $(D(z_k,r))_k$ implies
\begin{align*}
\int_{\CC^n} |f(z)|^qe^{-\frac{\alpha q}{2}|z|^2}d\mu(z) \leq \sum_{k=1}^\infty \int_{D(z_k,2r)} |f(z)|^qe^{-\frac{\alpha q}{2}|z|^2}d\mu(z)
\end{align*} Using the estimate in \eqref{estimate}, the sum is comparable to
\begin{align*}
\sum_{k=1}^\infty (1+|z_k|)^{-mq}\int_{D(z_k,2r)} |f(z)(1+|z|)^m|^qe^{-\frac{\alpha q}{2}|z|^2}d\mu(z)
\end{align*}
 which is bounded by
 \begin{align*}
 \sum_{k=1}^\infty \frac{\mu(D(z_k,2r))}{(1+|z_k|)^{mq}} \bigg(\sup_{z\in D(z_k,r))} |f(z)(1+|z|)^m|^pe^{-\frac{\alpha p}{2}|z|^2}\bigg)^{\frac{q}{p}}\ \ \ \ \ \ \ \ \ \ \ \ \ \ \ \ \ \ \ \   \\
 \lesssim\sup_{k\geq1} \frac{\mu(D(z_k,2r))}{(1+|z_k|)^{mq}}\Bigg( \sum_{k=1}^\infty  \int_{D(z_k, 3r)} \frac{ |f(z)(1+|z|)^m|^p}{e^{\frac{\alpha p}{2}|z|^2}} dV(z)\Bigg)^{\frac{q}{p}}
 =:\mathcal{S}_1.
  \end{align*}
 Now we claim  that  for each $f\in \mathcal{F}_{(m,\alpha)}^p,$
 \begin{align*}
 \sum_{k=1}^\infty\int_{D(z_k, 3r)} \frac{|f(z)(1+|z|)^m|^p}{e^{\frac{\alpha p}{2}|z|^2}} dV(z) \lesssim \sum_{k=1}^\infty\int_{D(z_k, 3r)} \frac{|f(z)|z|^m|^p}{e^{\frac{\alpha p}{2}|z|^2}} dV(z).
 \end{align*} Because of \eqref{estimate} the claim trivially follows if $|z_k|\geq 1$ for all $k.$ On the other hand, since
 $(z_k)$ assumed to be a fixed $r/2$ lattice for $\CC^n,$ its covering property ensures that the inequality $|z_k|<1$ can  happen for only a finite number of indices $k$. Thus  there exists a positive constant $M$ for which
 \begin{align*}
 \sum_{|z_k|<1}  \int_{D(z_k, 3r)} |f(z)|^pe^{-\frac{\alpha p}{2}|z|^2} dV(z)\leq M \sum_{|z_k|<1}  \int_{D(z_k, 3r)} \frac{|f(z)|^p |z|^{mp}}{e^{\frac{\alpha p}{2}|z|^2}} dV(z)\\
 \lesssim \sum_{k=1}^\infty\int_{D(z_k, 3r)} |f(z)|z|^m|^pe^{-\frac{\alpha p}{2}|z|^2} dV(z).
 \end{align*}
 Observe that the analysis above in general implies
 \begin{align}
 \label{furcomp}
 \int_{\CC^n}|f(z)(1+|z|)^m|^pe^{-\frac{\alpha p}{2}|z|^2} dV(z) \simeq  \int_{\CC^n}|f(z)|z|^m|^pe^{-\frac{\alpha p}{2}|z|^2} dV(z).
 \end{align}
  Making use of this estimate, we obtain
 \begin{align}
 \mathcal{S}_1 \lesssim\sup_{k\geq1} \frac{\mu(D(z_k,2r))}{(1+|z_k|)^{mq}}\|f\|_{(p,m)}^q
 \end{align} from which and Lemma~\ref{oneforall}, the statement in  (i) and the estimate
\begin{equation}
\label{two}
\|\mu\|^q \lesssim \big\|(1+|z_k|)^{-mq} \mu(D(z_k,r)) \big\|_{\ell^\infty}.
\end{equation} follow.
To this end, the series of norm estimates in \eqref{normestimate1} follows from
\eqref{keylem}, \eqref{onee} and \eqref{two}.

\emph{Proof of Theorem~\ref{compactcarleson}.}
The equivalency of the statements in (ii), (iii) and (iv) follows easily from a simple modification
of the proof of Lemma~\ref{gen}. Thus, we shall prove only (i) implies (iii) and (iv) implies (i). To prove the first, we consider a sequence of test functions $\xi_{(w,m)}$ defined by
 \begin{align*}\xi_{(w,m)}(z)= \frac{ k_{w}(z)}{(1+|w|)^{m}}= \frac{e^{\alpha\langle z,w\rangle-\frac{\alpha}{2}|w|^2}}{(1+|w|)^m}\end{align*}
for each  $w\in \CC^n.$ By  Lemma~20 of \cite{RCKZ} again, we have  $$\sup_{w\in \CC^n}\|\xi_{(w, m)}\|_{(p,m)}<\infty$$
 for all $p>0.$ It is also easily seen that
$\xi_{(w,m)} \to 0$ as $|w| \to \infty$, and the convergence is uniform on compact subsets of $\CC^n.$ If $\mu$ is a  $(p, q)$ vanishing Fock--Carleson measure, then
\begin{equation*}
\lim_{|w| \to \infty}\int_{\CC^n} |\xi_{(w,m)}|^q e^{-\frac{\alpha q}{2}|z|^2} d\mu(z)=\lim_{|w| \to \infty}\int_{\CC^n}\frac{e^{-\frac{\alpha q}{2}|z-w|^2}}{(1+|w|)^{mq}}  d\mu(z)= 0,
\end{equation*} from which  we have
\begin{equation*}
0\geq \lim_{|w| \to \infty}\int_{D(w,r)}  \frac{e^{-\frac{\alpha q}{2}|z-w|^2}}{(1+|w|)^{mq}}  d\mu(z)
\geq e^{-\frac{\alpha r^2}{2}}\lim_{|w| \to \infty} \frac{\mu(D(w,r)) }{(1+|w|)^{mq}}.
\end{equation*} Since  the factor $e^{-\alpha r^2/2} $ is independent of $w,$ the desired conclusion follows.

We now prove (iv) implies (i). Let $f_j$ be a sequence in $\mathcal{F}_{(m,\alpha)}^p$ such that $\sup_j \|f_j\|_{(p,m)}< \infty$
and $f_j$ converges to zero uniformly on each compact subset of $\CC^n$ as $j \to \infty.$ We aim to show that
\begin{equation*}
\int_{\CC^n} \big| f_j(z)e^{-\frac{\alpha}{2}|z|^2}\big|^q d\mu(z) \to 0
\end{equation*} as $j\to \infty$. By hypothesis, for each $\epsilon >0,$ there exists a positive integer $N_0$ such that
$\mu_{(mq,r, D)}(z_k)< \epsilon$ whenever $k\geq N_0.$ Let $U_{o}$ denotes the union of the closure of the balls $D(z_k, 3r), k=1,...N_0$. Then
$U_{o}$ is a  compact subset of $\CC^n$.  Since  $f_j$ converges to zero uniformly on compact
subsets of $\CC^n,$ there also exists $N_1> N_0$ such that
\begin{align}
\label{mig}
Q_1=\sum_{k=1}^{N_{0}}\mu_{(mq,r,D)}(z_k)\Bigg(\int_{D(z_k,3r)} (1+|z|)^{pm} |f_j(z)|^p e^{-\frac{\alpha p}{2}|z|^2}\Bigg)^{\frac{q}{p}}\ \ \ \ \ \ \ \ \ \ \ \  \nonumber\\
\leq\epsilon  \Bigg(\sum_{k=1}^{N_{0}} \int_{D(z_k,3r)} |f_j(z)|^p \frac{ (1+|z|)^{pm}}{ e^{\frac{\alpha p}{2}|z|^2}}\Bigg)^{\frac{q}{p}}
\lesssim \epsilon  \big(\sup_{z\in U_{0}} |f_j(z)|^p\big)^{\frac{q}{p}} \lesssim \epsilon
\end{align} for  all $j\geq N_1$. On the other hand,  for all $k\geq N_0$
\begin{align}
\label{yon}
Q_2=\sum_{N_{0}+1}^{\infty}\mu_{(mq,r, D)}(z_k)\bigg(\int_{D(z_k,3r)} (1+|z|)^{mp} |f_j(z)|^p e^{-\frac{\alpha p}{2}|z|^2}\bigg)^{\frac{q}{p}}\ \ \ \ \ \ \ \nonumber\\
\leq\epsilon  \bigg(\sum_{N_{0}+1}^{\infty}\int_{D(z_k,3r)} (1+|z|)^{mp} |f_j(z)|^p e^{-\frac{\alpha p}{2}|z|^2}\bigg)^{\frac{q}{p}}\nonumber\\
\lesssim \epsilon\|f_j\|_{(p,m)}^q \lesssim \epsilon \ \ \ \ \ \ \ \ \ \ \ \ \ \ \ \ \ \ \ \ \ \ \ \ \ \ \ \ \ \ \
\end{align}
Thus, using \eqref{estimate}, \eqref{mig}, \eqref{yon}, and  sufficiently large $j\geq \max\{N_0, N_1\}$,
\begin{align*}
\int_{\CC^n} \big| f_j(z)e^{-\frac{\alpha}{2}|z|^2}\big|^q d\mu(z)\leq \sum_{k=1}^\infty \int_{D(z_k,r)} |f_j(z)|^qe^{-\frac{\alpha q}{2}|z|^2} d\mu(z) \ \ \ \ \ \ \ \ \ \ \ \ \\
\lesssim \sum_{k=1}^\infty \mu_{(mq,r, D)}(z_k) \sup_{z\in D(z_k,r)}\frac{ |f_j(z)|^q(1+|z|)^{mq}}{e^{\frac{\alpha q}{2}|z|^2}}\\
\leq Q_1+ Q_2 \lesssim \epsilon.
\end{align*}
\emph{Proof of Theorem~\ref{strongcarlesonmeasure}.}
Since $p/(p-q)\geq 1,$ by  Lemma~\ref{lemma1}, (iii), (iv) and (v) are again  equivalent. To show that   statement (v) follows from (i) we may
follow the classical Leuecking's approach via  Khinchine's equality in \cite{DL}. Consider a function $f$ in  $\mathcal{F}_{(m,\alpha)}^p.$  Then by Lemma~\ref{Fourier},  $ z^\beta f$ belongs to $ \mathcal{F}_{\alpha}^p$ for all multi-indices $\beta$ such that $\beta_1+ \beta_2 +...\beta_n= m$. Thus, there  exists a sequence  $c_j \in \ell^p,  0<p\leq \infty$,  for which
\begin{equation} \label{special}
z^\beta f(z)= \sum_{j=1}^\infty c_j k_{z_j}(z) \in \mathcal{F}_{(\alpha)}^p\ \text{and }\ \ \|f\|_{(p,m)} \simeq \||z|^m k_{z_j}\|_{p}  \lesssim \|(c_j)\|_{\ell^p.}
\end{equation} This was proved  in  \cite{SJR} for $p\geq 1$ and in \cite{RW} for $0<p<1$.
We first assume that $0<q<\infty$.  Since $\mu$ is a
$(p, q)$ Fock--Carleson measure,
\begin{equation*}
\int_{\CC^n} |f(z)|^q e^{-\frac{\alpha q}{2}|z|^2} d\mu(z) \leq
\|\mu\|^q \|f_c\|_{(p,m )}^q \lesssim \|\mu\|^q
\|(c_j)\|_{\ell^p}^q.
\end{equation*}  If $(r_j)$ is  the Rademacher
sequence of functions on $[0,1]$ chosen  as in \cite{DL}, then
Khinchine's inequality yields
\begin{equation}\Bigg(\sum_{j=1}^\infty
|c_j|^2|k_{z_j}(z)|z|^{-m}|^2\Bigg)^{q/2} \lesssim \int_0^1
\Bigg|\sum_{j=1}^\infty c_jr_jk_{z_j}(z)|z|^{-m}\Bigg|^qdt.
\label{Khinchine}
\end{equation}
Note that here if the $r_j$ are chosen as refereed above, then
$(c_jr_j)\in \ell^p$ with
$\|(c_jr_j)\|_{\ell^p} = \|(c_j)\|_{\ell^p}$ and
 \begin{equation*}
 \sum_{j=1}^\infty c_jr_jk_{z_j}(z)z^{-\beta} \in \mathcal{F}_{(m,\alpha)}^p, \ \text{with}\
 \Big\|\sum_{j=1}^\infty c_jr_j(t)k_{(z_j,\alpha)}(z)z^{-\beta} \Big\|_{(p, m)} \lesssim \|(c_j)\|_{\ell^p}
 \end{equation*} for all multi-indices $\beta$ such that $\beta_{sn}= m.$
Making use of first \eqref{Khinchine} and subsequently Fubini's
theorem, we obtain
\begin{align}
\label{combine}
\int_{\CC^n} \Bigg( |c_j|^2|k_{z_j)}(z)|z|^{-m}|^2\Bigg)^{q/2} d\mu(z) \ \ \ \ \ \ \ \ \ \ \ \ \ \ \ \ \ \ \ \
\ \ \ \ \ \ \ \ \ \ \ \ \ \ \ \ \ \ \ \ \ \ \ \ \ \ \nonumber \\
 \lesssim \int_{\CC^n}\Bigg(\int_0^1 \bigg| \sum_{j=1}^\infty c_j r_j(t)k_{z_j}(z)|z|^{-m}\bigg|^q
dt\Bigg) d\mu(z)\nonumber\\
=\int_0^1\Bigg(\int_{\CC^n}\bigg| \sum_{j=1}^\infty c_j
r_j(t)k_{z_j}(z)|z|^{-m}\bigg|^q d\mu(z)\Bigg)dt\nonumber\\
\lesssim\|\mu\|^q \|(c_j)\|_{\ell^p}^q. \ \ \ \ \ \ \ \ \ \ \ \ \ \  \ \ \ \ \ \ \ \ \ \ \ \ \ \ \ \ \ \ \ \ \ \ \
\end{align}
Now if $q\geq 2,$ then using \eqref{estimate}, we have
\begin{align}
\label{partly}
\sum_{j=1}^\infty |c_j|^q \frac{\mu(D(z_j,r))}{(1+|z_j|)^{mq}}=
\int_{\CC^n}\sum_{j=1}^\infty |c_j|^q \frac{\chi_{D(z_j,r)}(z)}{(1+|z|)^{mq}}
d\mu(z)\\
\leq \int_{\CC^n}\Bigg(\sum_{j=1}^\infty |c_j|^2
\frac{\chi_{D(z_j,r)}(z)}{(1+|z|)^{2m}}\Bigg)^{q/2}d\mu(z)\ \ \ \ \
 \label{pbig}
\end{align}
where the last inequality is since $q/2 \geq 1$ and $|c_j|\geq 0$ for all $j.$ On the other hand, if
$q<2,$ then applying H\"{o}lder's inequality with exponent $2/q$ to the integral in \eqref{partly} gives
\begin{align*}
\int_{\CC^n}\sum_{j=1}^\infty |c_j|^q\frac{\chi_{D(z_j,r)}(z)}{(1+|z_j|)^{mq}}  d\mu(z)
&\leq N_{\max}^{\frac{2-q}{2}}\int_{\CC^n}\Bigg(\sum_{j=1}^\infty |c_j|^2
\frac{\chi_{D(z_j,r)}(z)}{(1+|z|)^{2m}}\Bigg)^{q/2}d\mu(z)\\
&\lesssim \int_{\CC^n}\Bigg(\sum_{j=1}^\infty |c_j|^2
\frac{\chi_{D(z_j,r)}(z)}{(1+|z|)^{2m}}\Bigg)^{q/2}d\mu(z).
\end{align*}
The last integral here  and in \eqref{pbig} are  bounded by
\begin{align*}
 e^{\frac{q\alpha }{2}r^2}\int_{\CC^n}\Bigg(\sum_{j=1}^\infty |c_j|^2
\frac{e^{-\alpha|z-z_j|^2}}{(1+|z|)^{2m}}\Bigg)^{q/2}d\mu(z)\lesssim
\int_{\CC^n} \bigg( |c_j|^2|k_{z_j)}(z)|z|^{-m}|^2\bigg)^{q/2}d\mu(z).
\end{align*}This combined with \eqref{combine} gives
\begin{equation}
\label{dual} \sum_{j=1}^\infty |c_j|^q\frac{\mu(D(z_j,r))}{(1+|z_j|)^{mq}} \lesssim
\|\mu\|^q\|(c_j)\|_{\ell^p}^q= \|\mu\|^q \Big(\sum_{j=1}^\infty |c_j|^p\Big)^{q/p}.
\end{equation} Then a duality argument gives that
\begin{equation}
\label{three1}
\mu_{(mq,r,D)}(z_j) \in \ell^{\frac{p}{p-q}}\ \ \text{and}\ \ \|\mu\|^q \gtrsim \| \mu_{(mq,r,D)}(z_j) \|_{\ell^{\frac{p}{p-q}}}.
\end{equation}
We now prove (iv) implies (i).  Integrating both side of \eqref{est} against the measure $\mu$ and subsequently using
$ \chi_{D(z,r)}(w)= \chi_{D(w,r)}(z)$, and Fubini's theorem we get
\begin{align}
\int_{\CC^n}|f(w)e^{-\frac{\alpha}{2}|w|^2}|^q d\mu(w)\lesssim \int_{\CC^n}\big|f(w)e^{-\frac{\alpha}{2}|w|^2}\big|^q \mu(D(w,r))dV(w)\nonumber\\
= \int_{\CC^n}\big|f(w)e^{-\frac{\alpha}{2}|w|^2}(1+|w|)^m\big|^q \mu_{(mq,r,D)}dV(w)
\label{need}
\end{align}
Applying  H\"{o}lder's inequality with exponent $p/q$ and \eqref{furcomp}
\begin{align*}
\label{confus}
\int_{\CC^n}  \big|f(w)e^{-\frac{\alpha}{2}|w|^2} (1+|w|)^m\big|^q \mu_{(mq,r,D)dV(w)}\lesssim \|f\|_{(p,m)}^q \big\|\mu_{(mq,r,D)}\big\|_{L^{\frac{p}{p-q}}}.
\end{align*}
 It follows from this
that the estimate
\begin{equation*}
\label{three2}
\|\mu\|^q \lesssim \|\mu_{(mq,r,D)}\|_{L^{\frac{p}{p-q}}}
\end{equation*}  which together with  \eqref{three1} and \eqref{keylem} yields  the series of norm estimates in \eqref{normestimate2}.

Obviously, (ii) implies (i). We proceed to show its converse. Let $f_j$ be a sequence of
functions in $\mathcal{F}_{(m,\alpha)}^p$ such that $\sup_j
\|f_j\|_{(p,m)}<\infty$ and $f_j$ converges uniformly to
zero on compact subsets of $\CC^n$ as $j\to \infty.$ For a fixed
$R>\delta >0,$ we write
\begin{align*}
  \int_{\CC^n}|f_j(z)|^qe^{\frac{-q\alpha}{2}|z|^2}
d\mu(z)&= \bigg(\int_{|z|\leq R-\delta} +\int_{|z|>R-\delta}\bigg)
|f_j(z)|^q
e^{-\frac{q \alpha}{2}|z|^2}  d\mu(z)\nonumber\\
&=I_{j1} +I_{j2}.
\end{align*} We estimate the two pieces of integrals independently and consider first $I_{j1}.$ Since
$f_j \to 0$ uniformly on compact subsets of $\CC^n$ as $j\to
\infty$, we find
\begin{align*}
\limsup_{j\to \infty}I_{j1}&=\limsup_{j \to \infty}\int_{|z|\leq
R-\delta} |f_j(z)|^q
e^{-\frac{q\alpha}{2}|z|^2}  d\mu(z)\nonumber\\
&\leq\limsup_{j \to \infty} \sup_{|z|\leq
R-\delta}|f_j(z)|^q\int_{|z|\leq R-\delta} e^{-\frac{q\alpha}{2}|z|^2} d\mu(z)\nonumber\\
 &\lesssim\limsup_{j \to \infty} \sup_{|z|\leq
R-\delta}|f_j(z)|^q \to 0, \ \text{as}\  \ j\to \infty.
\end{align*}
If we denote by $\mu^R$ the truncation of  $\mu$ on the set
$\{z\in\CC^n: |z|>R-\delta\},$ then applying \eqref{need} we obtain,
\begin{align*}
\limsup_{j\to \infty}I_{j2}&=\limsup_{j \to\infty}\int_{|z|>R-\delta} |f_j(z)|^q
e^{-\frac{q\alpha}{2}|z|^2}d\mu(z)\\
&=\limsup_{j \to \infty}\int_{\CC^n} |f_j(z)|^qe^{-\frac{q\alpha}{2}|z|^2}d\mu^R(z) \\
&\lesssim\limsup_{j \to \infty}\int_{\CC^n}
|f_j(z)|^q ( 1+|z|)^{mq} e^{-\frac{q\alpha}{2}|z|^2} \mu^R_{(mq, r, D)}dV(z).
\end{align*}
 Applying H\"{o}lder's inequality again, we obtain
\begin{align*}
\limsup_{j \to \infty}\int_{\CC^n}
|f_j(z)|^q ( 1+|z|)^{mq} e^{-\frac{q\alpha}{2}|z|^2}\mu^R_{(mq, r, D)}dV(z)
\ \ \ \ \ \ \ \ \ \ \ \ \ \ \ \ \ \ \ \ \ \ \ \ \ \ \ \ \ \ \ \\
\leq \limsup_{j \to \infty}\|f_j\|_{(p,m)}^q\int_{\CC^n}
\Big|\mu^R_{(mq, r, D)}\Big|^{\frac{p}{p-q}}dV(z)  \ \ \ \ \ \ \\
=\limsup_{j \to \infty} \|f_j\|_{(p, m)}^q
\int_{|z|>R-r} \Big|\mu^R_{(mq, r, D)}\Big|^{\frac{p}{p-q}}dV(z).
\end{align*}
Since $ \sup_j \|f_j\|_{(p,m)}<\infty \ \ \text{and}\ \ \mu^R_{(mq, r, D)}\in L^{\frac{p}{p-q}},$
 we let $R \to \infty$ in the above relation to conclude that $\mu $ is a  $(p,q)$ vanishing Fock--Carelson measure, and completes the proof of the theorem.

\emph{Proof of Theorem~\ref{boundedinfinity}.}
The proof of the theorem closely follows the arguments used  in the proof of Theorem~\ref{strongcarlesonmeasure}. We
will sketch only some of the required modifications  below. The equivalencies  of the statements  in  (iii), (iv) and (v) follow from
Lemma~\ref{lemma1} with $s= mq$. We observe that the  global  geometric condition (vi) follows from
(iii) when we in particular set  $t=1$. Because by Fubini's theorem, we may have
 \begin{align}
 \label{tesfaa}
   \int_{\CC^n} \widetilde{\mu}_{(1,mq)}(z) dV(z)&=\int_{\CC^n}\int_{\CC^n} \frac{ e^{\frac{\alpha}{2} |\langle w, z\rangle|^2 -\frac{\alpha}{2}
 |z|^2-\frac{\alpha}{2} |w|^2} }{(1+|z|)^{mq}}d\mu(w)dV(z)\nonumber\\
 &=\int_{\CC^n} \bigg(\int_{\CC^n} \frac{e^{-\frac{\alpha}{2}
 |z-w|^2}}{(1+|z|)^{mq}} dV(z)\bigg)d\mu(w).
  \end{align} Since $$(1+|z|)^{-1} \leq \frac{1+ |z-w|}{1+|w|}, \ z, \ w \in \CC^n,$$ the integral in \eqref{tesfaa}
  is bounded by
  \begin{align}
  \label{furth}
  \int_{\CC^n} (1+|w|)^{-mq} \int_{\CC^n} (1+ |z-w|)^{mq} \frac{e^{-\frac{\alpha}{2}
 |z-w|^2}}{(1+|z|)^{mq}} dV(z)d\mu(w)\ \ \ \ \  \nonumber\\
 \lesssim \int_{\CC^n} \frac{1}{(1+|w|)^{mq}} d\mu(w)= \mu_{mq}(\CC^n).
  \end{align}
  On the other hand, an application of \eqref{estimate} gives
  \begin{align}
  \label{further}
  \int_{\CC^n} \bigg(\int_{\CC^n} \frac{e^{-\frac{\alpha}{2}
 |z-w|^2}}{(1+|z|)^{mq}} dV(z)\bigg)d\mu(w) &\geq \int_{\CC^n} \int_{D(w,1)} \frac{e^{-\frac{\alpha}{2}
 |z-w|^2}}{(1+|z|)^{mq}} dV(z)d\mu(w)\nonumber\\
 &\gtrsim e^{-\alpha/2} \int_{\CC^n} \frac{1}{(1+|w|)^{mq}} d\mu(w)\nonumber\\
 &\simeq \mu_{mq}(\CC^n).
  \end{align} Combining \eqref{tesfaa}, \eqref{furth}, and \eqref{further} we obtain
  \begin{align}
  \label{tesf}
   \int_{\CC^n} \widetilde{\mu}_{(1,mq)}(z) dV(z)\simeq \mu_{mq} (\CC^n).
  \end{align} This shows that shows that (vi) holds if and only
 if (iii) holds for $t=1$.

We now prove (i) implies (v). For this, we simply modify the proof of  (i) implies (v) in the proof of  Theorem~\ref{strongcarlesonmeasure}. Thus,  replace $p$
by $\infty$ and follow the same arguments until we get  equation \eqref{dual} which would be in this case
\begin{equation}
\label{con} \sum_{j=1}^\infty |c_j|^q \mu_{(mq, r, D)}(z_j) \lesssim
\|\mu\|^q\|(c_j)\|_{\ell^\infty}^q.
\end{equation}
Since $(c_j)$ is an arbitrary sequence
in $\ell^\infty,$ we may in particular  set $c_j=1$  for all $j$ in
the above relation to make the desired conclusion. Observe that this
particular choice in \eqref{con} also ensures
\begin{equation}
\label{oneside}\big\|  \mu_{(mq, r, D)}(z_j)\big\|_{\ell^1} \lesssim
\|\mu\|^q.
\end{equation}
To prove that (i) follows from  (iii), observe that
applying  \eqref{need} to a function
  $f$ in $ \mathcal{F}_{(m,\alpha)}^\infty$ gives
\begin{align}
\int_{\CC^n}  \big|f(w)e^{-\frac{\alpha}{2}|w|^2}\big|^q d\mu(w) &\lesssim
\int_{\CC^n}  \big|f(w)e^{-\frac{\alpha}{2}|w|^2}\big|^q (1+|w|)^{mq}\mu_{(mq,r,D)}(w)dV(w)\nonumber\\
 &\leq\|f\|_{(\infty,m)}^q\int_{\CC^n}   \mu_{(mq,r,D)}(w)dV(w)\nonumber\\
 &=\|f\|_{(\infty,m)}^q\|\mu_{(mp,r,D)}\|_{L^1}
\label{book1}
\end{align} which  completes the proof for (iii) implies (i).
From  \eqref{book1}, we also have
\begin{equation}
\label{normpart} \|u\| \lesssim
\|\mu_{(mp,r,D)}\|_{L^1}^{1/q}
\end{equation} from which, \eqref{oneside}, \eqref{tesf} and \eqref{keylem}, the series of  norm estimates
in \eqref{fourequal} follow.

It remains to show (ii) follows from (i). But this can be easily  done  by  simply modifying
a similar proof in  Theorem~\ref{strongcarlesonmeasure}. Thus, we omit the details.

\emph{Proof of Theorems~\ref{bounded},\ \ref{bounded1} and \ref{bounded3}}.
The central  idea in these proofs is to translate the given problem into a $(p,q)$ embedding map problem for the
Fock--Sobolev spaces; through which we may  invoke the notion of $(p,q)$ Fock--Carleson measures and apply the results already proved  in the preceding parts.

 For each $p>0,$ we set $\theta_{(m, p)}$ to be the
positive pull back  measure on $\CC^n$ defined by \begin{equation*}
\theta_{(m,  p)}(E)=  \int_{\psi^{-1}(E)}
|u(z)|^p|z|^{mp} e^{-\frac{\alpha p}{2}|z|^2}
dV(z)\end{equation*} for every Borel  subset $E$ of $\CC^n$.
Then by substitution, we have
\begin{align*}
\|uC_{\psi)}f\|_{(m, q)}^q&\simeq \int_{\CC^n} |f(z)|^q
d\theta_{(m,q)}(z)=\int_{\CC^n} |f(z)e^{-\frac{\alpha}{2}|z|^2}|^q  e^{\frac{q\alpha}{2}|z|^2}
d\theta_{(m,q)}(z)\\
&=\int_{\CC^n} |f(z)e^{-\frac{\alpha}{2}|z|^2}|^q d\lambda_{(m,q)}(z)
\end{align*} where $d\lambda_{(m,q)}(z)= e^{\frac{q\alpha}{2}|z|^2}
d\theta_{(m,q)}(z).$  This shows that $ uC_{\psi}: \mathcal{F}_{(m,\alpha)}^p \to \mathcal{F}_{(m,\alpha)}^q$   is bounded if and only if $\lambda_{(m,q)}$ is a $(p, q)$ Fock--Carleson measure. We may now consider three different cases depending on the size of the exponents.

\textbf{Case 1: $p \leq q$.} In this case, by Theorem~\ref{carlesonmeasure}, the boundedness of $ uC_{\psi}$ holds if and only if
$\widetilde {\lambda}_{(m,q)}$ belongs to $L^\infty$.  But substituting back  $d\lambda_{(m,q)}$ and $ d\theta_{(m,q)}$ in terms of $dV $ results in
\begin{align*}
\widetilde {\lambda}_{(m,q)}(z)&= \int_{\CC^n} \frac{ e^{-\frac{q\alpha}{2}|w-z|^2}}{(1+|w|)^{mq}} d\lambda_{(m,q)}(w)\\
 &= \int_{\CC^n} \frac{|k_{z}(\psi(w))|^q e^{-\frac{q\alpha}{2}|w|^2}}{(1+|\psi(w)|)^{mq}}|u(w)|^q|w|^{mq} dV(w)\\
 &=B_{(m, \psi)}(|u|^q)(z).
 \end{align*} The norm estimate in \eqref{norm1estimate} easily follows from  the series of norm  estimates in
 Theorem~\ref{carlesonmeasure}.

The proof of part (ii) of   Theorem~\ref{bounded} is  similar to the first part. This time we  need  to argue with  Theorem~\ref{compactcarleson}  instead of Theorem~\ref{carlesonmeasure}. Thus, we omit the trivial details.

\textbf{Case 2:  $0<q<p<\infty.$} By Theorem~\ref{strongcarlesonmeasure},  $\lambda_{(m,q)}$ is a $(p, q)$ Fock--Carleson measure if and only if $\lambda_{(m,q)}$ is a  $(p, q)$ vanishing Fock--Carleson measure. This  again holds if and only if  $\widetilde {\lambda}_{(m,q)}=B_{(m, \psi)}(|u|^q)$ belongs to $L^{p/(p-q)}$. The norm estimate in \eqref{normless1} also follows from the series of norm estimates in
\eqref{normestimate2}.

\textbf{Case $3: 0<q<\infty$  and $p=\infty$}. As in the previous cases, by Theorem~\ref{boundedinfinity}, $\lambda_{(m,q)}$ is an $(\infty, q)$ Fock--Carleson measure if and only if  $\lambda_{(m,q)}$ is an  $(\infty, q)$ vanishing Fock--Carleson measure which is equivalent to the fact that  $\widetilde {\lambda}_{(m,q)}=B_{(m, \psi)}(|u|^q)\in L^1$. The norm estimate in \eqref{normless2} follows again  from the estimates in \eqref{fourequal}.

\emph{Proof of Theorem~\ref{bounded2}.}
We first note if  $m=0,$  the function
\begin{align*}B^\infty_{(m,\psi)} (|u|)(z)
 = \frac{|z|^m |u(z)|}{(1+|\psi(z)|)^m} e^{\frac{\alpha}{2}\big(|\psi(z)|^2-|z|^2\big)}= |u(z)| e^{\frac{\alpha}{2}\big(|\psi(z)|^2-|z|^2\big)},
  \end{align*}and for this particular case, the theorem was proved in \cite{SS}. We now generalize the proof
for any $m.$
From a  simple application of Lemma~3 of \cite{RCKZ}, we conclude
\begin{equation}
\label{appl}
|f(z)| \leq \frac{\|f\|_{(p,m)}}{(1+|z|)^{m}} e^{\frac{\alpha}{2}|z|^2}
\end{equation} for each $f$ in $\mathcal{F}_{(m,\alpha)}^p$ and $0<p\leq \infty.$ This implies
\begin{align*}
\|uC_\psi f\|_{(\infty,m)}&= \sup_{z\in \CC^n} |u(z)||z|^m |f(\psi(z))|e^{\frac{-\alpha}{2}|z|^2}\\
&\leq \|f\|_{(p,m)} \sup_{z\in \CC^n}  \frac{|u(z)||z|^m}{(1+|\psi(z)|)^{m}} e^{\frac{\alpha}{2}|\psi(z)|^2-\frac{\alpha}{2}|z|^2}\\
&=\|f\|_{(p,m)} \sup_{z\in \CC^n} B^\infty_{(m,\psi)} (|u|)(z)
\end{align*} from which one side of the estimate in \eqref{normbound},
\begin{equation}
\|uC_\psi\| \leq  \|B^\infty_{(m,\psi)} (|u|)\|_{L^\infty},
\end{equation}
 and the sufficiency of part (i) of the theorem follow.

 To prove the necessity part of the theorem,   for  each point  $w\in \CC^n$  we use again  the  sequence of
test functions  $\xi_{(w, m)}(z)= (1+|w|)^{-m} k_{w}(z).$ Then
\begin{equation}\label{normb} \|\xi_{(w,m)}\|_{(p,m)} \lesssim 1 \end{equation}  independent of
$p$ and $w$ which follows by Lemma~20 of \cite{RCKZ} for $p<\infty$ and  from a simple argument for $p= \infty.$
Applying $uC_\psi$ to $\xi_{(w, m)}$ and completing the square on the exponent yields
\begin{align*}
\|uC_\psi\| \gtrsim \| uC_\psi \xi_{(w, m)} \|_{(\infty, m)}\geq  \frac{|u(z)||z|^m}{(1+|w|)^{m}} e^{\frac{\alpha}{2}\big(|\psi(z)|^2 - |\psi(z)-w|^2-|z|^2\big)}
\end{align*}
for all points $w$ and $z$ in $\CC^n.$ Setting $w= \psi(z)$ in particular leads to
\begin{equation*}
\|uC_\psi\| \gtrsim  \frac{|u(z)||z|^m}{(1+|\psi(z)|)^{m}} e^{\frac{\alpha}{2}|\psi(z)|^2 -\frac{\alpha}{2}|z|^2}=  B^\infty_{(m,\psi)} (|u|)(z)
\end{equation*} from which the necessity of the condition  and   the remaining side of the estimate in \eqref{normbound} follow.

To prove the second part of the theorem, we first assume that $uC_\psi$ is compact. The  sequence  $\xi_{(w, m)}$ converges to zero as
$|w| \to \infty,$ and the convergence is uniform on compact subset of $\CC^n.$  We further assume that there exists sequence of points
$z_j \in \CC^n$ such that $|\psi(z_j)| \to \infty$ as $j\to \infty.$ If such a sequence does not exist, then \eqref{normcomp} holds trivially. It follows from compactness of  $uC_\psi$ that
\begin{equation}
\limsup_{j\to \infty} B_{(m,\psi)}^\infty (|u|)(z_j)  \leq \limsup_{j\to \infty}\| uC_\psi \xi_{(\psi(z_j)), m} \|_{(\infty,m)}=0
\end{equation} from which \eqref{normcomp} follows.

We next suppose that $uC_\psi$ is bounded and  condition \eqref{normcomp}holds. We proceed to show compactness of
      $uC_\psi $. The condition along with Theorem~\ref{bounded} implies that  $uC_\psi $ is a bounded map.  On the other hand, the  function $f(z)= 1$ belongs to $\mathcal{F}_{(p,m)}$ , in deed, a  computation along \eqref{normequal} results in  $\|f\|_{(p,m)}=1$. It follows that by  boundedness, the weight function $u$ belongs to $\mathcal{F}_{(m,\alpha)}^\infty$.
Let $f_j$ be a sequence of functions in $\mathcal{F}_{(m,\alpha)}^p$ such that $\sup_j
\|f_j\|_{(p,m)}<\infty$ and $f_j$ converges uniformly to
zero on compact subsets of $\CC^n$ as $j\to \infty.$  For each $\epsilon >0$  by  \eqref{normcomp} there  exists
a positive $N_1$ such that
\begin{equation*}
 B_{(m,\psi)}^\infty(|u|) (z) <\epsilon
\end{equation*} for all $|\psi(z)| > N_1.$ From this together  and \eqref{appl}, we obtain
\begin{align*}
 |uC_\psi f_j(z) ||z|^m e^{-\frac{\alpha}{2}|z|^2}&=
 |u(z) f_j(\psi(z))||z|^m e^{-\frac{\alpha}{2}|z|^2}\\
  &\leq \|f_j\|_{(p,m)}\frac{ |u(z)||z|^m}{(1+|\psi(z)|)^{m}} e^{\frac{\alpha}{2}|\psi(z)|^2-\frac{\alpha}{2}|z|^2}
  \lesssim \epsilon
 \end{align*} for all $|\psi(z)| > N_1$ and all $j.$ On the other hand if $|\psi(z)|\leq N_1,$ then it easily seen that
\begin{align*}
  |u(z) f_j(\psi(z))||z|^m e^{-\frac{\alpha}{2}|z|^2}
  &\leq\|u\|_{(\infty,m)}\sup_{z:|\psi(z)|\leq N_1 } |f_j(\psi(z))|\\
  &\lesssim  \sup_{z:|\psi(z)|\leq N_1 } |f_j(\psi(z))|  \to 0
   \end{align*} as $j\to \infty$, and completes the proof.

   \emph{Proof of Corollary~\ref{cor2}.}
   We first assume that  $uC_{\psi}:
\mathcal{F}_{(m,\alpha)}^p  (\ \text{or}\ \mathcal{F}_{(0,m,\alpha)}^\infty) \to
\mathcal{F}_{(0,m,\alpha)}^\infty$ is compact  and aim to verify condition \eqref{limit}. It follows that
 $uC_{\psi}:
\mathcal{F}_{(m,\alpha)}^p  (\ \text{or}\ \mathcal{F}_{(0,m,\alpha)}^\infty) \to
\mathcal{F}_{(m,\alpha)}^\infty$ is also compact. Then by part (ii) of Theorem~\ref{bounded2}, for each $\epsilon,$ there exists
a positive integer $N_1$ such that
\begin{equation*}
 B_{(m,\psi)}^\infty (|u|)(z) <\epsilon
\end{equation*} for all $|\psi(z)| > N_1.$  On the other hand, setting $f(z)=1,$ by boundedness (which follows from compactness) we have
$ u\in \mathcal{F}_{(0,m,\alpha)}^\infty$. Thus, there  exists a positive integer $N_2$  for which
\begin{equation*}
 |u(z)||z|^m e^{-\frac{\alpha}{2}|z|^2} <\epsilon e^{-\frac{\alpha}{2}N_1^2}
\end{equation*} for all $|z| > N_2.$  Therefore,   if $|z|>N_2>N_1$ we have
\begin{equation*}
 B_{(m,\psi)}^\infty (|u|)(z)\leq   e^{\frac{\alpha N_1^2}{2}}|u(z)|z|^m e^{\frac{-\alpha}{2}|z|^2} <\epsilon
\end{equation*} as desired.

For the converse, let $f_j$ be a uniformly bounded  sequence of functions in $\mathcal{F}_{(m,\alpha)}^p  (\ \text{or}\ \mathcal{F}_{(0,m,\alpha)}^\infty)$ which  converges uniformly to
zero on compact subsets of $\CC^n$ as $j\to \infty.$  For each $\epsilon >0,$ condition \eqref{limit} implies that there exists
a positive integer  $N_3$  for which
$
 B_{(m,\psi)}^\infty (|u|)(z) <\epsilon
$ for all $|z| > N_3.$  From this and  \eqref{appl}, we obtain
\begin{equation*}
 |uC_\psi f_j(z) ||z|^m e^{-\frac{\alpha}{2}|z|^2}
  \leq \sup_{j\geq1}\|f_j\|_{(p,m)}\frac{ |u(z)||z|^m}{(1+|\psi(z)|)^{m}} e^{\frac{\alpha}{2}|\psi(z)|^2-\frac{\alpha}{2}|z|^2}
  \lesssim \epsilon
 \end{equation*} for all $|z| > N_3$  and  all exponents $p$. On the other hand, since the set  $\{|\psi(z)|: |z|\leq N_3 \}$ is compact, there exists a positive integer  $N_4$  for which  $\sup_{ |z|\leq N_3 }|\psi(z)| \leq N_4$. Thus,
  \begin{align*}
  |u(z) f_j(\psi(z))||z|^m e^{-\frac{\alpha}{2}|z|^2}
  &\leq\|u\|_{(\infty,m)}\sup_{|w|\leq N_4 } |f_j(w)|\\
  &\lesssim  \sup_{|w|\leq N_4 } |f_j(w)|  \to 0, \ j\to \infty.
   \end{align*}
 \textit{Proof of Theorem~\ref{essentialnorm}.}
The proof of the theorem follows a classical approach  used to prove similar results
 in \cite{ZZH, TM1, SS,  UEKI1,UEKI, UEKI2}.
 Recall that each entire function $f$ can be expressed as $f(z)=
\sum_{k=0}^\infty p_k(z)$ where the function  $p_k$ are  polynomials of
degree $k.$ We consider  a sequence of operators $R_j$  defined by
\begin{equation}
\label{series}
(R_j f)(z)= \sum_{k=j}^\infty
p_k(z).
\end{equation}
It was proved in \cite{DGP,UEKI1} that
\begin{align}
\label{series}
\lim_{j\to \infty} \|R_jf\|_{p} = 0
\end{align}
for each $f$ in the ordinary Fock spaces $\mathcal{F}_\alpha^p$, and $1<p<\infty.$ Thus by the uniform boundedness principle
\begin{align} \label{uniform}
\sup_{j\geq 1}\|R_j\|<\infty.\end{align}
Now if $h\in \mathcal{F}_{(m,\alpha)}^p,$ then by Lemma~\ref{Fourier} and \eqref{series}
\begin{align}
\lim_{j\to \infty}\|R_jh\|_{(p,m)}\simeq\lim_{j\to \infty}\|R_j(z^\beta h)\|_p= 0
\end{align} from which the same conclusion \eqref{uniform} follows when the sequence $(R_n)$ is defined on
 weighted Fock--Sobolev space.

 Let $1<p\leq q\leq \infty$ and assume that    $uC_\psi: \mathcal{F}_{(m,\alpha)}^p \to  \mathcal{F}_{(m,\alpha)}^q$  is bounded. Then mimicking
 the proof of Lemma~2 in  \cite{UEKI2} yields
 \begin{equation}
 \label{uppessential}
\|uC_\psi \|_e \leq \liminf_{j \to \infty} \| uC_\psi
R_j\|_{(q,m)}.
\end{equation}
Having singled out this important inequality, we now proceed to prove the lower estimates in
the theorem. To this end, let  $Q$ be a
compact operator acting between   $\mathcal{F}_{(m,\alpha)}^p$ and $\mathcal{F}_{(m,\alpha)}^q$.  We first suppose that $q= \infty$. Since
 $\xi_{(w,m)}$ converges
to zero uniformly on compact subset of $\CC^n$ as $|w| \to \infty$ and \eqref{normb} holds, we
have
\begin{align}
\label{firstcase}
\|uC_\psi-Q\| &\geq\limsup_{|w| \to \infty}
\|uC_\psi \xi_{(w,m)}-Q
\xi_{(w,m)}\|_{(\infty,m)}\nonumber \\
&\geq \limsup_{|w| \to
\infty}\|uC_\psi \xi_{(w,m)}\|_{(\infty,m)}-\|Q
\xi_{(w,m)}\|_{(\infty,m)}\nonumber\\
&=\limsup_{|w| \to
 \infty}\|uC_\psi \xi_{(w,m)}\|_{(\infty,m)}\nonumber\\
 &\geq \limsup_{|\psi(w)| \to
\infty} B^\infty_{(\psi,m)}(|u|)(z)(w),
 \end{align}
 where the first equality is due to compactness of $Q.$

For $0<q<\infty,$ we consider a different sequence   of test functions in
$\mathcal{F}_{(m,\alpha)}^P$, namely that$ k_w;$ the normalized reproducing kernel function in $\mathcal{F}_{(0,\alpha)}^2$. This sequence replaces the role played
by $\xi_{(w,m)}$  above and running the same  procedure  as in \eqref{firstcase}  gives
\begin{align*}
\|uC_\psi-Q\| &\geq \limsup_{|w| \to
\infty}\|uC_\psi k_{w}\|_{(q,m)}-\|Q
\varphi_{w}\|_{(q,m)}\\
&=\limsup_{|w| \to
 \infty}\|uC_\psi k_{w}\|_{(q,m)}\\
 &\geq \limsup_{|w| \to
 \infty}\int_{\CC^n}\frac{|u(z)|^q |z|^{mq}}{(1+|\psi|)^{mq}} |k_w(\psi(z))|^{q} e^{-\frac{\alpha q}{2}|z|^2} dV\\
 &=\bigg(\limsup_{|w| \to
\infty} B_{(\psi,m)}(|u|^q)(w) \bigg)^{\frac{1}{q}}.
  \end{align*}
    From this  and \eqref{firstcase} the lower
 estimate in \eqref{essential} follows.

  To prove the upper estimate,  we again consider the next two different cases.\\
\textbf{Case 1}: Suppose $q<\infty.$ Then for each $f$ of  unit norm  in $\mathcal{F}_{(m,\alpha)}^p$,
 we get
\begin{align}
\label{firstintegral}
 \|uC_\psi R_j f\|_{(m, q)}^q\simeq   \int_{\CC^n} |R_jf(z)|^q d\theta_{(m, q)}(z)\ \ \ \ \ \ \ \ \ \ \ \ \ \   \ \ \ \ \nonumber\\
 =\bigg(\int_{ \CC^n\setminus D(0,\delta)}+
\int_{ D(0,\delta)}\bigg)|R_j f(z)|^qe^{-\frac{\alpha q}{2}|z|^2} d\lambda_{(m,q)}(z)
\end{align} where again $d\lambda_{(m,q)}(z)= e^{\frac{q\alpha}{2}|z|^2}
d\theta_{(m,q)}(z)$ and for some fixed $ \delta>0.$   By Theorem~\ref{bounded}, the first integral in \eqref{firstintegral} is bounded
by
\begin{align*}
\|R_jf\|_{(p,m)}^q \Big(\sup_{z\in \CC^n\setminus D(0,\delta)} B_{(\psi,m)}(|u|^q)(z) \Big)
 \lesssim \sup_{z\in \CC^n\setminus D(0,\delta)} B_{(\psi,m)}(|u|^q)(z)
\end{align*} where we used the fact that  $\sup_j\|R_j\|<\infty.$ It remains to estimate the second integral in
\eqref{firstintegral}. Again by Theorem~\ref{bounded} and followed by the n-variable version of  Lemma~3 in \cite{UEKI2}, the integral is estimated
as
\begin{align*}
\int_{ D(0,\delta)}|R_j f(z)|^qe^{-\frac{\alpha q}{2}|z|^2}  d\lambda_{(m,q)}(z) \ \ \ \ \ \ \ \ \ \ \ \ \ \ \ \ \ \ \ \ \ \ \ \ \ \ \ \ \ \ \ \ \ \ \ \ \ \ \ \ \ \ \ \ \ \ \ \ \ \ \ \ \  \\
\lesssim \sup_{z\in \CC^n} B_{(\psi,m)}(|u|^q)(z)\int_ { D(0,\delta)}|z|^{mp} | R_j f(z)|^pe^{-\frac{\alpha p}{2}|z|^2} dV(z)\\
\lesssim  \sup_{z\in \CC^n} B_{(\psi,m)}(|u|^q)(z)I_{j}
\int_{\CC^n}e^{-\frac{p\alpha}{2}|z|^2} dV(z)\ \ \ \ \ \ \ \ \ \ \ \\
\lesssim \sup_{z\in \CC^n} B_{(\psi,m)}(|u|^q)(z)I_{j} \ \ \ \ \ \ \ \ \
\end{align*} where
\begin{equation}
\label{neww}
 I_{j}\simeq \Bigg(\sum_{m= j}^\infty
 (\delta\alpha)^m \sum_{\beta_{ns}= m} (\beta!)^{-1} \prod_{l=1}^n \Big( \frac{2}{\alpha s}\Big)^{\frac{\beta_l}{2}+\frac{1}{s}} \Big(\Gamma\Big( \frac{s\beta_l}{2}+1\Big)\Big)^{\frac{1}{s}}\Bigg)^p
\end{equation} with $s$ the conjugate exponent of $p$ and $\beta!=\prod_{l=1}^n \beta_l! $.  Observe that by
Stirling's approximation formula, we have
\begin{align}
\Big(\Gamma\Big( \frac{s\beta_l}{2}+1\Big)\Big)^{\frac{1}{s}}\simeq \Big(\frac{\beta_l s}{2}\Big)^{\frac{\beta_l}{2}+\frac{1}{s}-\frac{1}{2s}} e^{-\frac{\beta_l}{2}}.
\end{align} Plugging this in \eqref{neww} and applying the ration test it is easily seen that  the series
converges and hence $I_{j}\to 0$ as $j\to \infty$. Thus, the  contribution from  the second integral in
\eqref{firstintegral} goes to zero for large enough $j$.
 Therefore
 \begin{equation*}
 \lim_{j\to \infty} \sup_{\|f\|_{(p,m)}= 1}
\|uC_\psi R_j f\|_{(q,m)}^q\lesssim \sup_{z\in \CC^n\setminus
D(0,\delta)} B_{(\psi,m)}(|g|^q)(z).\end{equation*} By
\eqref{uppessential} we get \begin{equation*} \|uC_\psi\|_e^q
\lesssim \lim_{\delta \to \infty}\sup_{ z\in \CC^n\setminus D(0,\delta)}
B_{(\psi,m)}(|u|^q)(z)\simeq \limsup_{|z| \to \infty}
B_{(\psi,m)}(|u|^q)(z)
\end{equation*} and completes the proof for the first case.

\textbf{Case 2: $q= \infty.$ } Not much effort is needed to prove this case since it follows by a simple modification of
the arguments used in the previous case. We shall
sketch it out for simplicity of the exposition. Acting similarly as above, for each $f$ of  unit norm in $\mathcal{F}_{(m,\alpha)}^p$, we may invoke \eqref{appl} to  get
\begin{eqnarray*}
|z|^m |uC_\psi R_jf(z)| e^{-\frac{\alpha}{2}|z|^2}&=&  |u(z)||z|^m |R_jf(\psi(z))|e^{\frac{-\alpha}{2}|z|^2}\\
&\leq& \|R_jf\|_{(p,m)}   \frac{|u(z)||z|^m}{(1+|\psi(z)|)^{m}} e^{\frac{\alpha}{2}|\psi(z)|^2-\frac{\alpha}{2}|z|^2}\\
&\leq& \sup_{j\geq 1} \|R_j\| B^\infty_{(m,\psi)} (|u|)(z) \lesssim B^\infty_{(m,\psi)} (|u|)(z)
\end{eqnarray*} from which we have that
\begin{equation*}
\sup_{|\psi(z)|\geq \delta} |z|^m |uC_\psi R_jf(z)| e^{-\frac{\alpha}{2}|z|^2}\lesssim \sup_{|\psi(z)|\geq \delta} B^\infty_{(m,\psi)} (|u|)(z).
\end{equation*}On the other hand, since $uC_\psi$ is bounded, the weight function $u$ belongs to $\mathcal{F}_{(m,\alpha)}^\infty.$ Thus by  Lemma~3 in
\cite{UEKI2} again, we have
\begin{eqnarray*}
\sup_{|\psi(z)|<\delta}|z|^m |uC_\psi R_nf(z) e^{-\frac{\alpha}{2}|z|^2}|\leq \|u\|_{(\infty,m)}I_{j}
\end{eqnarray*} where $I_j$  and $s$ are as in \eqref{neww}.
 But it is again easily seen that $I_j \to 0$ as $j \to \infty.$ Therefore,
\begin{equation*}
\liminf_{j\to \infty} \sup_{\|f\|_{(p,m)}= 1} \ \ \sup_{|\psi(z)|\leq \delta} |uC_\psi R_j f(z)||z|^m e^{-\frac{\alpha}{2}|z|^2} =0
\end{equation*} and hence
\begin{equation*}
\|uC_\psi\|_e \lesssim \sup_{|\psi(z)|\geq \delta} B^\infty_{(m,\psi)} (|u|)(z)
\end{equation*} from which we get
\begin{equation*}
\|uC_\psi\|_e \lesssim \limsup_{|\psi(z)|\to \infty }B^\infty_{(m,\psi)} (|u|)(z)
\end{equation*} after letting $\delta$ to $\infty,$ and completes
the proof.


\begin{thebibliography}{BRSHZ}
\bibitem{Al1} A. Aleksandrov, \emph{On embedding theorems for coinvariant subspaces
  of the shift operator II,} J. Math. Sci., \textbf{110} (2002), no.  5, 45--64.
  \bibitem{Ba4} A. Baranov, \emph{Embeddings of model subspaces of
the Hardy class: compactness and Schatten--von Neumann ideals.}
(Russian)  Izv. Ross. Akad. Nauk Ser. Mat.,  \textbf{73} (2009),  no. 6,
3--28; translation in  Izv. Math.,  \textbf{73 } (2009),  no. 6, 1077--1100.
\bibitem{BMS2} Y. Belov, T. Mengestie, and K. Seip, \emph{Discrete Hilbert transforms
 on sparse sequences}, Proc. London Math. Soc., (3) \textbf{103} (2011), no. 1,
 73--105.
\bibitem{Carl} L. Carleson, \emph{Interpolations by bounded analytic functions and the corona problem,}
Ann. of Math., \textbf{76} (1962), no. 2, 547--559.
\bibitem{CMS} B. Carswell, B. MacCluer,  and A. Schuster, \emph{Composition operators on the Fock
space}, Acta Sci. Math. (Szeged), \textbf{69} (2003), 871--887.

\bibitem{FMB} G. Chacon, E. Fricain, and M. ShaBankhah,
\emph{Carleson measures and reproducing kernels thesis in
Dirichlet-type spaces}, aXiv:1009.180v2,  2011.
\bibitem{SCBS} S. Charpentier and B.  Sehba, \emph{Carleson Measure Theorems for Large Hardy-Orlicz and Bergman-Orlicz Spaces},
 Journal of Function Spaces and Applications, Volume 2012, doi:10.1155/2012/792763.
\bibitem{CCK} H.R. Cho, B.R. Choe, and  H. Koo, \emph{Linear combinations of composition operators on the Fock--Sobolev spaces}, Preprint, 2011.
\bibitem{RCKZ} R. Cho  and  K. Zhu. \emph{Fock--Sobolev spaces and their Carleson measures}, Journal of Functional Analysis
Volume,  263, Issue 8, \textbf{15}  (2012), 2483--2506.

\bibitem{Co2} W. Cohn, \emph{Carleson measures  and operators on star-invariant subspaces,}
 J. Oper. Theory,  \textbf{15} (1986), 181--202.
\bibitem{Co1} W. Cohn, \emph{Carleson measures for functions orthogonal to
invariant subspaces,} Pacific J. Math., \textbf{103} (1982), 347--364.
\bibitem{ZZHH} Z. Cuckovi$\acute{\text{c}}$ and R. Zhao, \emph{Weighted composition operators between
different weighted  Bergman spaces and different Hardy spaces,}
Illinois Journal of mathematics, \textbf{51} (2007), no. 2,
479--498.
\bibitem{ZZH} Z. Cuckovi$\acute{\text{c}}$ and R. Zhao, \emph{Weighted composition operators on
the Bergman space,} J. London Math. Soc., \textbf{70} (2004),
499--511.
\bibitem{DGP} D. Garling and P. Wojtaszczyk, \emph{Some Bargmann spaces of analytic functions,}
Proceedings of the conference on function spaces, Edwardsville,
Lecture Notes in Pure and Applied Mathematics, \textbf{172} (1995),
123--138.
\bibitem{ZHXL} Z. Hu and X. Lv, \emph{Toeplitz operators from one Fock space to
another}, Integr. Equ. Oper. Theory, \textbf{70} (2011), 541--559.

\bibitem{JIKZ} J. Isralowitz and  K. Zhu, \emph{Toeplitz operators on the Fock
space,} Integr. Equ. Oper. Theory, \textbf{66} (2010), no. 4,
593--611.
\bibitem{SJR} S. Janson, J. Peetre, and R. Rochberg, \emph{Hankel forms and the Fock space,}
Rev. Mat. Iberoamericana,  \textbf{3} (1987), 61--138.

\bibitem{DL1} D. Luecking, \emph{A Technique for Characterizing Carleson Measures on Bergman Spaces},
 Proc. Amer. Math. Soc.,  \textbf{87} (1983),No. 4, 656--660.
 \bibitem{DL} D. Luecking, \emph{Embedding theorems for space of analytic functions via
  Khinchine's inequality,}  Michigan Math. J.,  \textbf{40} (1993),  333--358.
  \bibitem{TM0} T. Mengestie, \emph{Product of Volterra type integral and composition
   operators on weighted Fock spaces}, Journal of Geometric  Analalysis, 2012 ; DOI 10.1007/s12220-012-9353-x.
   \bibitem{TM1} T. Mengestie, \emph{Schatten class weighted composition  operators on weighted Fock spaces}, to appear, 2013.
\bibitem{TM} T. Mengestie, \emph{Volterra type and weighted composition operators on  weighted Fock
spaces}, Integral Equations and Operator
Theory,  \textbf{76} (2013), no 1, 81--94.
 \bibitem{SCPower} S. Power,
 \emph{Vanishing Carleson measures,} Bull. London Math. Soc., \textbf{12 }(1980), 207--210.
 \bibitem{KH} K. Seip and  El. Youssfi, \emph{ Hankel operators on Fock spaces and related Bergman kernel estimates},  Journal of Geometric Anal., \textbf{23}(2013), 170--201.
 \bibitem{Shapiro} J. Shapiro, \emph{The essential norm of a composition operator,}
  Annals of Math., \textbf{125} (1987), 375--404.
 \bibitem{SS} S. Stevi$\acute{\text{c}}$, \emph{Weighted composition
operators between Fock-type spaces in $\CC^N$}, Applied Mathematics and
Computation, \textbf{215} (2009),  2750--2760.
\bibitem{TV} S. Treil  and A. Volberg, \emph{Embedding theorems for invariant subspaces
of the inverse shift operator,} (Russian) Zap, Nauchn. Sem.
Leningrad. Otdel. Mat. Inst. Steklov(LOMI), 149 (1986), 38--51;
translation in J. Soviet Math.,  42 (1988), 1562--1572.
\bibitem{UEKI1} S.   Ueki, \emph{Weighted composition operator on the
Bergman--Fock spaces,} Int. J. Mod. Math., \textbf{3} (2008),
231--243.
\bibitem{UEKI} S. Ueki, \emph{Weighted composition operator on the
Fock space,} Proc. Amer. Math. Soc., \textbf{135} (2007),
1405--1410.
\bibitem{UEKI2} S.  Ueki, \emph{Weighted composition operators on some function
spaces of entire functions,} Bull. Belg. Math. So. Simon
Stevin, \textbf{17} (2010), 343--353.
\bibitem{DV} D. Vukoti$\acute{\text{c}}$, \emph{Pointwise multiplication operators between Bergman
spaces on simply connected domains}, Indiana Univ. Math. J.,
\textbf{48} (1999), 793--803.
\bibitem{RW} R. Wallst$\acute{e}$n, \emph{The $S^p$ Criterion for Hankel forms on the Fock space,
$0<p<1$}, Math. Scand., \textbf{64} (1989), 123--132.
\bibitem{KZH} K. Zhu, \emph{Spaces of holomorphic functions in the unit ball}, Springer-Verlag, New
York, 2005.
\end{thebibliography}
\end{document}